\newtheorem{thm}{Theorem}[section]
\newtheorem{cor}[thm]{Corollary}
\newtheorem{lem}[thm]{Lemma}
\newtheorem{prop}[thm]{Proposition}
\theoremstyle{remark}
\newtheorem{rem}[thm]{Remark}
\theoremstyle{definition}
\newtheorem{defin}[thm]{Definition}
\newcommand{\na}{\mathcal{N}(A)}
\newcommand{\sa}{\mathcal{S}(A)}
\newcommand{\ma}{\mathcal{M}(A)}
\newcommand{\R}{\mathbb{R}}
\newcommand{\C}{\mathbb{C}}
\newcommand{\Z}{\mathbb{Z}}
\newcommand{\Grad}{G_{\mathrm{rad}}}
\renewcommand{\Re}{\mathrm{Re}}
\renewcommand{\Im}{\mathrm{Im}}
\DeclareMathOperator{\chg}{chg}
\title{Global wellposedness of the equivariant Chern-Simons-Schr\"odinger equation}
\author[B. Liu]{Baoping Liu}
\address{University of Chicago}
\email{baoping@math.uchicago.edu}
\author[P. Smith]{Paul Smith}
\address{University of California, Berkeley}
\email{smith@math.berkeley.edu}
\thanks{The second author was supported by NSF grant DMS-1103877.}
\begin{document}

\begin{abstract}
  In this article we consider the initial value problem for the $m$-equivariant
  Chern-Simons-Schr\"odinger model in two spatial dimensions with coupling parameter $g \in \R$.
  This is a covariant NLS type problem that is $L^2$-critical.
  We prove that at the critical regularity, for any equivariance index $m \in \Z$, 
  the initial value problem 
  in the defocusing case ($g < 1$) is globally wellposed and the solution scatters.
  The problem is focusing when $g \geq 1$, and in this case we prove that for
  equivariance indices $m \in \Z$, $m \geq 0$, there exist constants $c = c_{m, g}$ such that,
  at the critical regularity, the initial
  value problem is globally wellposed and the solution scatters when the initial data
  $\phi_0 \in L^2$ is $m$-equivariant and satisfies $\| \phi_0 \|_{L^2}^2 < c_{m, g}$. 
  We also show that $\sqrt{c_{m, g}}$
  is equal to the minimum $L^2$ norm of a nontrivial $m$-equivariant standing wave solution.
  In the self-dual $g = 1$ case, we have the exact numerical values $c_{m, 1} = 8\pi(m + 1)$.
\end{abstract}

\maketitle

\tableofcontents

\section{Introduction}

The two-dimensional Chern-Simons-Schr\"odinger system is a nonrelativistic quantum
model describing the dynamics of a large number of particles in the plane
interacting both directly and via a self-generated field. The variables
we use to describe the dynamics are the scalar field $\phi$, describing the particle
system, and the potential $A$, which can be viewed as a real-valued 1-form on $\R^{2+1}$.  
The associated covariant differentiation operators are defined in terms of the potential $A$ as
\begin{equation}
D_\alpha := \partial_\alpha+ i A_\alpha, \quad \quad \alpha = 0, 1, 2
\label{D alpha}
\end{equation}
With this notation, the action integral for the system is
\begin{equation} \label{Lag}
L(A,\phi) = \frac12 \int_{\R^{2+1}}  
\left[ \Im (\bar \phi D_t \phi) + |D_x \phi|^2 -\frac{g}2 |\phi|^4 \right] dx dt +
\frac12 \int_{\R^{2+1}} A \wedge dA
\end{equation}
where $g \in \R$ is a coupling constant.
The Lagrangian is invariant with respect to the transformations
\begin{equation}\label{gauge-freedom}
\phi \mapsto e^{-i \theta} \phi
\quad \quad
A \mapsto A + d \theta
\end{equation}
for compactly supported real-valued functions $\theta(t, x)$.

Computing the Euler-Lagrange equations results in a covariant NLS equation for $\phi$, coupled with
equations giving the field $F = dA$ in terms of $\phi$:
\begin{equation}
\begin{cases}
D_t \phi
&=
i D_\ell D_\ell \phi + i g \lvert \phi \rvert^2 \phi
\\
F_{01}
&= - \Im(\bar{\phi} D_2 \phi)
\\
F_{02}
&=
\Im(\bar{\phi} D_1 \phi)
\\
F_{12}
&=
-\frac{1}{2} \lvert \phi \rvert^2
\end{cases}
\label{CSScom}
\end{equation}
For indices, we use $\alpha = 0$ for the time variable $t$
and $\alpha = 1, 2$ for the spatial variables $x_1, x_2$. When we
wish to exclude the time variable in a certain expression, we
switch from Greek indices to Roman ones. Repeated indices are assumed
to be summed, Greek ones over $\{0, 1, 2\}$, and Roman ones over $\{1, 2\}$.

The system (\ref{CSScom}) is a basic model of Chern-Simons dynamics
\cite{JaPi91, EzHoIw91, EzHoIw91b, JaPi91b}. For further physical
motivation for studying (\ref{CSScom}), see \cite{JaTe81,Jackiw1982, Jackiw1991, MaPaSo91,
Wi90}.

The Chern-Simons-Schr\"odinger system (\ref{CSScom}) 
inherits from \eqref{Lag} the gauge invariance \eqref{gauge-freedom}.
It is also Galilean-invariant and has conserved
\emph{charge}
\begin{equation} \label{charge}
\chg(\phi) := \int_{\R^2} \lvert \phi \rvert^2 dx
\end{equation}
and \emph{energy}
\begin{equation} \label{energy}
E(\phi) :=
\frac12 \int_{\R^2}
\left[ \lvert D_x \phi \rvert^2 - \frac{g}{2} \lvert \phi \rvert^4 \right] dx
\end{equation}
As the scaling symmetry
\[
\phi(t,x)\rightarrow \lambda\phi(\lambda^2t, \lambda x ), \quad
\phi_0(x)\rightarrow \lambda\phi_0(\lambda x); \quad \lambda>0,
\]
preserves the charge of the initial data $\phi_0$, $L^2_x$ is
the critical space for the main evolution equation of (\ref{CSScom}).

In order for \eqref{CSScom} to be a well-posed system, the gauge freedom
\eqref{gauge-freedom} has to be eliminated.
This is achieved by imposing an additional constraint equation.
In the Coulomb gauge, local wellposedness in $H^2$ is established in \cite{BeBoSa95}.  
Also given are conditions ensuring finite-time blowup.
With a regularization argument, \cite{BeBoSa95} demonstrates global existence
(but not uniqueness) in $H^1$ for small $L^2$ data.
Local wellposedness for data small in $H^s$, $s > 0$, is established in \cite{LiSmTa13}
using the heat gauge. 
We refer the reader to \cite[\S 2]{LiSmTa13}
for a comparison of the Coulomb and heat gauges.
At the critical scaling of $L^2$, local existence implies global existence
for small data; it is an open problem to determine whether the 
Chern-Simons-Schr\"odinger system is
wellposed at the critical regularity in any gauge given small but otherwise arbitrary $L^2$ initial data.

The purpose of this article is to establish the global wellposedness of \eqref{CSScom}
for large $L^2$ data in a symmetry-reduced setting, and with respect to the Coulomb gauge.
We provide a brief introduction to these assumptions here and will formalize them in due course.
The Coulomb gauge condition is the requirement that $\nabla \cdot A_x = 0$.
Under this gauge choice, we assume that the wavefunction $\phi$ is equivariant, i.e.,
in polar coordinates $(r, \theta)$ it admits the representation $\phi(t, r, \theta) = e^{i m \theta} u(t, r)$
for some $m \in \Z$ and some radial function $u \in L^\infty_t L^2_x$. 
The integer $m$ we refer to as the degree of equivariance; it is a topological
quantity that is invariant under the flow. The case $m = 0$ corresponds to the radial case.
The natural defocusing range for this problem is $g < 1$, as $H^1$ solutions in this range
necessarily have positive energy. Positivity of the energy for such $g$ is not immediate from its definition \eqref{energy} 
but will be shown to be a consequence of the Bogomol'nyi identity \eqref{Bogo}.

It is convenient to rewrite \eqref{CSScom} as
\begin{equation} \label{CSS}
\begin{cases}
(i \partial_t + \Delta) \phi &= - 2i A_j \partial_j \phi - i \partial_j A_j \phi + A_0 \phi + A^2_x \phi - g |\phi|^2 \phi \\
\partial_t A_1 - \partial_1 A_0 &= - \Im(\bar{\phi} D_2 \phi) \\
\partial_t A_2 - \partial_2 A_0 &= \Im(\bar{\phi} D_1 \phi) \\
\partial_1 A_2 - \partial_2 A_1 &= - \frac12 |\phi|^2
\end{cases}
\end{equation}
We study \eqref{CSS} in the Coulomb gauge, which is the requirement that
\begin{equation} \label{CoulombGauge}
\partial_1 A_1 + \partial_2 A_2 = 0
\end{equation}
Coupling \eqref{CoulombGauge} with the curvature constraints leads to
\[
A_0 = \Delta^{-1} \left[ \partial_1 \Im(\bar{\phi} D_2 \phi) - \partial_2 \Im(\bar{\phi} D_1 \phi) \right], \quad
A_1 = \frac12 \Delta^{-1} \partial_2 |\phi|^2, \quad
A_2 = - \frac12 \Delta^{-1} \partial_1 |\phi|^2
\]
We may rewrite $A_0$ as
\begin{equation} \label{A0-Cartesian}
A_0 = \Im(Q_{12}(\bar{\phi}, \phi)) + \partial_1(A_2 |\phi|^2) - \partial_2(A_1 |\phi|^2)
\end{equation}
where the null form $Q_{12}$ is defined by
\[
Q_{12}(f, g) = \partial_1 f \partial_2 g - \partial_2 f \partial_1 g
\]
The equivariance ansatz suggests using polar coordinates.
In fact, we will take advantage of both Cartesian coordinates and polar coordinates.
Motivated by the transformations
\[
\partial_r = \frac{x_1}{|x|} \partial_1 + \frac{x_2}{|x|} \partial_2, \quad \quad
\partial_\theta = - x_2 \partial_1 + x_1 \partial_2
\]
and
\[
\partial_1 = (\cos \theta) \partial_r - \frac1r (\sin \theta) \partial_\theta, \quad \quad
\partial_2 = (\sin \theta) \partial_r + \frac1r (\cos \theta) \partial_\theta
\]
we introduce
\begin{equation} \label{A1A2->ArAtheta}
A_r = \frac{x_1}{|x|} A_1 + \frac{x_2}{|x|} A_2, \quad \quad
A_\theta = - x_2 A_1 + x_1 A_2
\end{equation}
which are easily seen to satisfy 
\begin{equation} \label{ArAtheta->A1A2}
A_1 = A_r \cos \theta  - \frac1r A_\theta \sin \theta, \quad \quad
A_2 = A_r \sin \theta + \frac1r A_\theta \cos \theta 
\end{equation}
Using these transformations, we may eliminate $A_1, A_2, \partial_1, \partial_2$
in \eqref{CSS} in favor of $A_r, A_\theta, \partial_r, \partial_\theta$.
In particular,
\[
A_j \partial_j = A_r \partial_r + \frac{1}{r^2} A_\theta \partial_\theta, \quad
\partial_j A_j = \partial_r A_r + \frac1r A_r + \frac{1}{r^2} \partial_\theta A_\theta, \quad
A_1^2 + A_2^2 = A_r^2 + \frac{1}{r^2} A_\theta^2
\]
The main evolution equation of \eqref{CSS} therefore admits the representation
\begin{equation} \label{S-Coulomb}
\begin{split}
(i \partial_t + \Delta) \phi 
&= 
- 2i \left( A_r \partial_r + \frac{1}{r^2} A_\theta \partial_\theta \right) \phi 
- i \left( \partial_r A_r + \frac1r A_r + \frac{1}{r^2} \partial_\theta A_\theta \right) \phi
\\
&\quad + A_0 \phi + A_r^2 \phi + \frac{1}{r^2} A_\theta^2 \phi - g |\phi|^2 \phi
\end{split}
\end{equation}
which in more compact form reads
\begin{equation} \label{phievo}
D_t \phi = i \left(D_r^2 + \frac1r D_r + \frac{1}{r^2} D_\theta^2\right) \phi + i g |\phi|^2 \phi
\end{equation}
We also rewrite the $F = dA$ curvature relations in terms of the variables $t, r, \theta$, with
\begin{equation} \label{curvF}
F_{0r} = \partial_t A_r - \partial_r A_0, \quad
F_{0\theta} = \partial_t A_\theta - \partial_\theta A_0, \quad
F_{r\theta} = \partial_r A_\theta - \partial_\theta A_r
\end{equation}
For instance, we have
\[
x_1 (\partial_t A_1 - \partial_1 A_0) + x_2 ( \partial_t A_2 - \partial_2 A_0)
=
- x_1 \Im(\bar{\phi} D_2 \phi) + x_2 \Im(\bar{\phi} D_1 \phi)
\]
which reduces to
\[
r \partial_t A_r - r \partial_r A_0
=
\Im(\bar{\phi} (x_2 D_1 - x_1 D_2) \phi)
\]
so that
\begin{equation} \label{TimeCurvature}
r \left[ \partial_t A_r - \partial_r A_0 \right] =
- \Im(\bar{\phi} \partial_\theta \phi) + A_\theta |\phi|^2 = - \Im(\bar{\phi} D_\theta \phi)
\end{equation}
Similarly, we obtain
\[
\partial_t A_\theta - \partial_\theta A_0 = r \Im(\bar{\phi} D_r \phi)
\]
and
\[
\partial_1 A_2 - \partial_2 A_1 =
\frac1r \partial_r A_\theta - \frac1r \partial_\theta A_r
\]
which implies
\begin{equation} \label{SpatialCurvature}
\partial_r A_\theta - \partial_\theta A_r = -\frac12 |\phi|^2 r
\end{equation}
Therefore we may write \eqref{CSS} equivalently as
\begin{equation} \label{CSSalt}
\begin{cases}
(i \partial_t + \Delta) \phi &=  
- 2i \left( A_r \partial_r + \frac{1}{r^2} A_\theta \partial_\theta \right) \phi 
- i \left( \partial_r A_r + \frac1r A_r + \frac{1}{r^2} \partial_\theta A_\theta \right) \phi \\
&
\quad + A_0 \phi + A_r^2 \phi + \frac{1}{r^2} A_\theta^2 \phi - g |\phi|^2 \phi \\
\partial_t A_r - \partial_r A_0 &= -\frac1r \Im(\bar{\phi} D_\theta \phi) \\
\partial_t A_\theta - \partial_\theta A_0 &= r \Im(\bar{\phi} D_r \phi) \\
\partial_r A_\theta - \partial_\theta A_r &= - \frac12 |\phi|^2 r \\
\end{cases}
\end{equation}
In polar coordinates, the energy \eqref{energy} takes the form
\begin{equation} \label{polarenergy}
E(\phi) = \frac12 \int_0^{2\pi} \int_0^\infty 
\left( |D_r \phi|^2 + \frac{1}{r^2} |D_\theta \phi|^2 - \frac{g}{2} |\phi|^4 \right) r dr d\theta
\end{equation}
Our next simplification is to restrict to equivariant $\phi$.
Our formulation of the equivariant ansatz implicitly assumes that we have chosen the Coulomb gauge condition
\eqref{CoulombGauge},
which in $A_\theta, A_r$ variables takes the form
\begin{equation} \label{Coulpol}
(\frac1r + \partial_r) A_r + \frac{1}{r^2} \partial_\theta A_\theta = 0
\end{equation}
In particular, we assume that $(A, \phi)$ is of the form
\begin{equation} \label{equian}
\phi(t, x) = e^{i m \theta} u(t, r), \quad
A_1(t, x) = -\frac{x_2}{r} v(t, r), \quad
A_2(t, x) = \frac{x_1}{r} v(t, r), \quad
A_0(t, x) = w(t, r)
\end{equation}
The only assumption that we make on $m$ is that $m \in \Z$, and so in particular
we include the radial case $m = 0$.
This ansatz implies that $A_r = 0$ and that $A_\theta$ is a radial function, 
and so \eqref{Coulpol} is satisfied.
Equivariant solutions, of the form \eqref{equian}, are also known as \emph{vortex} solutions,
and appear in related contexts (see, for instance, 
\cite{PaKh86, VeSc86, VeSc86b, JaWe90, ChSp09, ByHuSe12}).
We also make the natural assumption that $A_0$ decays to zero at spatial infinity
(see the proof of Lemma \ref{lem:A0} and the references therein for further discussion of this point).

Next we rewrite the system \eqref{CSSalt} assuming the equivariant ansatz \eqref{equian}.
Thanks to the ansatz, $\partial_\theta \phi = i m \phi$ holds identically, and so we
make this substitution where convenient.
We obtain
\begin{equation} \label{equiCSS}
\begin{cases}
(i \partial_t + \Delta) \phi &=  
\frac{2m}{r^2} A_\theta \phi + A_0 \phi + \frac{1}{r^2} A_\theta^2 \phi - g |\phi|^2 \phi 
\\
\partial_r A_0 &= \frac1r (m + A_\theta) |\phi|^2  \\
\partial_t A_\theta &=  r \Im(\bar{\phi} \partial_r \phi) \\
\partial_r A_\theta &= - \frac12 |\phi|^2 r \\
A_r &= 0
\end{cases}
\end{equation}
\begin{defin}[Equivariant Sobolev spaces] \label{def:equiSobo}
Let $m \in \Z$. For each $s \geq 0$, we define the function space $H^s_m$ to be the
Sobolev space of all functions $f \in H^s_x$ that admit the decomposition 
$f(x) = f(r, \theta) = e^{i m \theta} u(r)$. We also will use the notation $L^2_m = H^0_m$.
\end{defin}

Our first main theorem is the following.
\begin{thm} \label{thm:main}
Let $g < 1$ and $m \in \Z$. 
Then \eqref{equiCSS} is globally wellposed in $L^2_m$, and,
furthermore, solutions scatter both forward and backward in time.
\end{thm}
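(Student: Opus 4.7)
The plan is to follow the Kenig--Merle concentration-compactness and rigidity scheme, adapted to the equivariant Chern-Simons-Schr\"odinger setting. First, I would reduce \eqref{equiCSS} to a single scalar equation for the radial profile $u(t,r)$. The constraint equations in \eqref{equiCSS} are first-order ODEs in $r$ and admit explicit solutions --- fixing $A_\theta(t,0)=0$ by regularity at the origin and $A_0(t,\infty)=0$ by decay at infinity --- so that both $A_\theta$ and $A_0$ become pointwise nonlocal functionals of the mass density $|u|^2$. Substituting these back produces a covariant radial Schr\"odinger equation
\begin{equation*}
i\partial_t u = -(\partial_r^2 + r^{-1}\partial_r)\, u + r^{-2}(m+A_\theta)^2\, u + A_0\, u - g\, |u|^2 u,
\end{equation*}
in which the $r^{-2}(m+A_\theta)^2$ term acts as a repulsive Bessel-type potential sensitive to the equivariance index.

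Second, I would invoke the Bogomol'nyi identity \eqref{Bogo} to conclude that $E(\phi)\geq 0$ when $g<1$, giving a priori $H^1$-level control whenever the solution is smooth enough to carry energy. At the $L^2_m$ critical regularity only $\chg(\phi)$ is conserved, so local wellposedness and small-mass global wellposedness with scattering must come from perturbative Strichartz analysis of $e^{it\Delta}$: the null-form representation \eqref{A0-Cartesian} of $A_0$ supplies the cancellation needed for the quintic-type terms, while $A_\theta$ behaves as a mollifier of $|u|^2$ that is tame in $L^2$-based norms. The relevant multilinear estimates are close relatives of those developed in \cite{LiSmTa13}.

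Third, assuming the theorem fails, an equivariant $L^2$ profile decomposition produces a critical mass $c_* > 0$ and a minimal blowup solution $\phi_*$ whose forward-in-time orbit is almost periodic modulo the surviving symmetries. The equivariance ansatz eliminates both spatial translations and Galilean boosts, so the only noncompact symmetry is scaling, and the modulation is parametrized by a single frequency scale $N(t)$. Standard soliton/cascade/finite-time-blowup alternatives then reduce matters to ruling out such an almost-periodic $\phi_*$.

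The main obstacle will be the rigidity step. I plan to differentiate a localized virial/Morawetz quantity along \eqref{equiCSS} and combine the resulting identity with the Bogomol'nyi factorization to produce a coercive bulk term that is strictly positive for $g<1$. The delicate technical point is that $A_\theta(t,r)\to -\tfrac{1}{2}\chg(\phi)$ as $r\to\infty$, so the magnetic corrections are long-range and not integrable in $r$; absorbing them will likely require a regularity upgrade for $\phi_*$ via a frequency-localized bootstrap that exploits almost-periodicity. Once the coercive inequality is established, integration in time contradicts the compactness of the orbit and rules out $\phi_*$, completing the proof.
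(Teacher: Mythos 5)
Your overall architecture coincides with the paper's: a perturbative $L^4_{t,x}$-based Cauchy theory, a concentration-compactness reduction to a minimal-mass critical element that is almost periodic modulo scaling, a frequency-localized regularity upgrade exploiting almost periodicity, positivity of the energy for $g<1$ via the Bogomol'nyi identity \eqref{Bogo}, and a localized virial identity for rigidity. One point of emphasis differs: at the level of the basic $L^{4/3}_{t,x}$ nonlinear estimates no null-form cancellation is needed. Under the equivariance ansatz, $A_\theta$ and $A_0$ are one-dimensional integrals of $|\phi|^2$ in $r$, and Hardy-type bounds on $[r\partial_r]^{-1}$ and $[r^{-n}\bar{\partial}_r]^{-1}$ together with nonlocal H\"older estimates handle the cubic and quintic terms; the Cartesian null-form representation \eqref{A0-Cartesian} is used only to locate the frequency support of $A_0$, not to gain regularity, so importing the multilinear machinery of \cite{LiSmTa13} is unnecessary here.

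The genuine gap is in the rigidity step. The reduction (Lemma \ref{lem:blowup}) produces \emph{two} scenarios --- a global solution with $\sup_t N(t)<\infty$ and a self-similar solution on $I=(0,\infty)$ with $N(t)=t^{-1/2}$ --- and the single argument you sketch only disposes of the first. The localized virial with a fixed truncation radius $R$ requires the orbit to concentrate in a fixed ball uniformly in time; for the self-similar solution the concentration radius $C(\eta)/N(t)=C(\eta)\,t^{1/2}$ grows without bound, so the truncated quantity $I_R$ cannot be compared uniformly to the bulk term and the ``bounded quantity with derivative bounded below'' contradiction does not close. A separate argument is needed. The paper's is short: in the self-similar case the regularity upgrade gives $\| \phi(t) \|_{\dot{H}^s} \lesssim_s N(t)^s = t^{-s/2}$, so $\| \phi(t) \|_{\dot{H}^1} \to 0$ as $t \to \infty$; since $|E(\phi)| \lesssim \| \phi \|_{\dot{H}^1}$ and $E$ is conserved, $E(\phi)=0$, and the Bogomol'nyi rewriting \eqref{enerbogo} with $g<1$ then forces $\phi \equiv 0$. (One can instead adapt the virial to the self-similar regime as in \cite[\S 9]{KiTaVi09}, but that is not the fixed-$R$ argument you describe.) With that case supplied, your outline matches the paper's proof.
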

For our second main theorem, we introduce the notation
$\Z_+$ to denote $\{0, 1, 2, \ldots \}$. 
In this theorem for the coupling constant we take $g = 1$, the so-called ``critical coupling"
or ``self-dual" coupling value.
\begin{thm} \label{thm:main2}
Let $g = 1$ and $m \in \Z_+$. 
Let $\phi_0 \in L^2_m$ with $\chg(\phi_0) < 8\pi (m+1)$.
Then \eqref{equiCSS} is globally wellposed in $L^2_m$ and scatters both forward and backward in time.
\end{thm}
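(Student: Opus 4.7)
The plan is to run the concentration-compactness/rigidity scheme that underlies Theorem \ref{thm:main}, with the defocusing sign condition $g < 1$ replaced by the coercivity that the Bogomol'nyi identity extracts from the sub-threshold mass constraint $\chg(\phi_0) < 8\pi(m+1)$.

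\textbf{Step 1: Bogomol'nyi coercivity below threshold.} Under the self-dual coupling $g = 1$ and the $m$-equivariant ansatz \eqref{equian} with $m \geq 0$, the Bogomol'nyi identity reorganizes the energy \eqref{polarenergy} as a sum of nonnegative squares of modified covariant derivatives plus a boundary/topological contribution governed by the charge and the winding index. Combined with the sharp $m$-equivariant Gagliardo--Nirenberg inequality---whose optimizers are precisely the self-dual Jackiw--Pi vortices of charge $8\pi(m+1)$---this yields a lower bound of the schematic form
\[
E(\phi) \geq \Bigl(1 - \tfrac{\chg(\phi)}{8\pi(m+1)}\Bigr)\cdot c\,\|D_x \phi\|_{L^2}^2,
\]
so that the conserved charge together with the sub-threshold hypothesis give uniform coercive control of the covariant kinetic term throughout the lifespan. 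This is exactly the role played by positivity of the energy in the defocusing regime.

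\textbf{Step 2: concentration-compactness reduction.} Granted coercive energy control, the $L^2$-critical concentration-compactness machinery developed for Theorem \ref{thm:main} applies essentially verbatim. Assuming toward contradiction that scattering fails for some $\phi_0 \in L^2_m$ satisfying the hypotheses, an equivariant profile decomposition adapted to the Coulomb gauge and to the symmetry class $L^2_m$ produces a minimal-mass non-scattering solution of \eqref{equiCSS}. This minimal object is almost periodic modulo the residual symmetries of scaling and time translation, and standard bootstrapping propagates enough regularity to apply the coercivity from Step 1.

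\textbf{Step 3: rigidity.} A virial or Morawetz-type monotonicity formula, closed using the coercivity from Step 1, forces the almost periodic minimal element to reduce to an $m$-equivariant standing wave of \eqref{equiCSS}. Such standing waves are precisely the Jackiw--Pi vortices at equivariance degree $m$; they satisfy the first-order self-dual system, and an explicit ODE computation shows that their charges form a discrete set whose minimum value is $8\pi(m+1)$. This contradicts $\chg(\phi_0) < 8\pi(m+1)$, so the hypothetical minimal counterexample cannot exist, and the theorem follows.

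The principal obstacle is Step 3: executing the rigidity step robustly enough to eliminate every obstruction to scattering, and matching the resulting static solutions to the Jackiw--Pi vortex family together with the sharp mass computation $8\pi(m+1)$. A secondary technical difficulty is ensuring in Step 2 that the profile decomposition respects both the $m$-equivariance and the Coulomb-gauge constraint, though the scaffolding for this is largely inherited from the proof of Theorem \ref{thm:main}.
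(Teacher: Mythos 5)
Your architecture matches the paper's: reduce via concentration compactness (Lemma \ref{lem:blowup}) to an almost periodic critical element, upgrade its regularity, and use the Bogomol'nyi identity together with the charge $8\pi(m+1)$ of the $m$-equivariant Jackiw--Pi vortex to run the rigidity step. However, Step 1 as you state it is a genuine gap. The inequality
$E(\phi) \geq \bigl(1 - \chg(\phi)/(8\pi(m+1))\bigr)\, c\, \|D_x\phi\|_{L^2}^2$
is asserted, not proved, and proving it is essentially equivalent to establishing a sharp equivariant covariant Gagliardo--Nirenberg inequality with the vortices identified as optimizers --- a variational problem with its own compactness difficulties (note that $A_\theta$ depends nonlinearly on $\phi$, so this is not the standard NLS sharp GN statement). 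The paper never proves or uses such a quantitative coercivity. What it actually uses is much softer: for $g=1$ the Bogomol'nyi rearrangement \eqref{enerbogo} gives $E(\phi) = \frac12\|D_+\phi\|_{L^2}^2 \geq 0$ unconditionally, and Lemma \ref{lem:g1} shows $E(\phi)=0$ forces $\phi$ to solve the self-dual system \eqref{SDCSS}, whose nontrivial $m$-equivariant solutions are the explicit vortices with $\chg = 8\pi(m+1)$. Hence sub-threshold charge plus nontriviality implies $E(\phi) > 0$, which is the only input the rigidity arguments of \S\ref{sec:noaps} require; no control of $\|D_x\phi\|_{L^2}^2$ by the energy is needed there (the error terms in the localized virial identity \eqref{viriallocal} are handled by almost periodicity and the uniform $\dot H^s$ bounds, not by coercivity).

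Your Step 3 also misattributes the mechanism. The virial/Morawetz monotonicity does not ``force the minimal element to reduce to a standing wave''; once $E(\phi)>0$ it directly contradicts the global almost periodic scenario, because $\frac{d}{dt}I_R \gtrsim E(\phi) > 0$ while $|I_R|$ stays bounded. The self-similar scenario is ruled out separately by energy conservation: the $\dot H^1$ decay from Theorem \ref{ExtraRss} forces $E(\phi)=0$, whence $\phi$ would be a vortex of charge $\geq 8\pi(m+1)$, contradicting the sub-threshold hypothesis. The identification with standing waves enters only through the zero-energy case via the Bogomol'nyi factorization, not through the virial identity. If you replace your Step 1 by the qualitative statement ``$E\geq 0$, with equality only for vortices of charge $8\pi(m+1)$'' and repair Step 3 accordingly, the argument closes exactly as in the paper.
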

We have a similar statement for the case $g > 1$, though in this case we have not identified
the numerical values of threshold constants. We do show, however, that the threshold constant is related to soliton solutions.
\begin{thm} \label{thm:main3}
Let $g > 1$ and $m \in \Z_+$.
Then there exists a constant $c_{m, g} > 0$ such that if
$\phi_0 \in L^2_m$ with $\chg(\phi_0) < c_{m, g}$, then
\eqref{equiCSS} is globally wellposed in $L^2_m$ and scatters forward and backward in time.
Moreover, the minimum charge of a nontrivial standing wave solution in the class $L^\infty_t L^2_m$
is equal to $c_{m, g}$.
\end{thm}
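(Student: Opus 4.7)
The plan is to adapt the concentration-compactness and rigidity scheme that proves Theorem \ref{thm:main2} to the case $g > 1$, where the Bogomol'nyi identity is no longer available and the threshold constant has to be characterized variationally. I would define
\[
c_{m, g} := \inf \{ \chg(Q) \colon Q \in L^2_m \setminus \{0\}, \ e^{i\omega t} Q(x) \text{ generates a standing wave solution of \eqref{equiCSS} for some } \omega \in \R \},
\]
so that the \emph{moreover} statement of the theorem becomes tautological, and then prove global wellposedness and scattering below this threshold.

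The first task is to show $c_{m,g} > 0$ and that the infimum is attained. Substituting the standing wave ansatz $\phi(t,r,\theta) = e^{i\omega t} e^{im\theta} Q(r)$ into \eqref{equiCSS} and eliminating $A_\theta$, $A_0$ via the constraint equations reduces the problem to a single nonlocal elliptic ODE for the profile $Q(r)$. By a Weinstein-type duality, minimization of $\chg$ over the set of nontrivial standing waves is equivalent to determining the sharp constant in the equivariant covariant Gagliardo-Nirenberg inequality
\[
\tfrac{g}{2} \int_{\R^2} |\phi|^4 \, dx \ \leq \ \frac{\chg(\phi)}{c_{m,g}} \int_{\R^2} \Big( |D_r \phi|^2 + \frac{1}{r^2} |D_\theta \phi|^2 \Big) dx, \qquad \phi \in H^1_m.
\]
Compactness of a minimizing sequence, which normally fails on $H^1(\R^2)$ due to translation invariance, is restored by the $m$-equivariance through the improved pointwise decay estimates available for equivariant functions, analogous to the Strauss embedding for radial functions.

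The inequality above immediately yields coercivity: for any $\phi \in H^1_m$ with $\chg(\phi) < c_{m,g}$,
\[
E(\phi) \geq \tfrac12 \Big(1 - \tfrac{\chg(\phi)}{c_{m,g}}\Big) \int_{\R^2} \Big(|D_r \phi|^2 + \tfrac{1}{r^2}|D_\theta \phi|^2\Big) dx,
\]
which plays the role that the Bogomol'nyi identity played in the $g = 1$ case. With this coercivity in hand, one runs the concentration-compactness-rigidity argument: let $c^{\ast}$ denote the supremum of charges for which every equivariant $L^2_m$ datum launches a global scattering solution, and assume for contradiction that $c^{\ast} < c_{m,g}$. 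The linear profile decomposition adapted to \eqref{equiCSS}, together with the covariant perturbation theory developed in the preceding sections, then yields a critical element $\phi_c$ of charge $c^{\ast}$ whose orbit is precompact in $L^2_m$ modulo the $L^2$-critical scaling symmetry. A virial/Morawetz-type rigidity argument forces $\phi_c$ to be, after modulation, a standing wave, and hence to satisfy $\chg(\phi_c) \geq c_{m,g} > c^{\ast}$, a contradiction.

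The main obstacle I anticipate is the variational step. The functional to be minimized is nonlocal and non-quadratic in $\phi$ alone, because the covariant derivatives $D_r, D_\theta$ depend on the gauge potential $A_\theta$, which is itself determined nonlocally by $|\phi|^2$ through the fourth equation of \eqref{equiCSS}. One must show that the reduced functional is well-defined and bounded below on $H^1_m \setminus \{0\}$, and that a minimizing sequence is tight and converges strongly in $L^4$ after exploiting the equivariant structure. The rigidity step is also delicate because the standard virial computations for NLS acquire extra terms from the Chern-Simons gauge field, and one has to check that these gauge-theoretic contributions preserve the monotonicity needed to conclude that $\phi_c$ is in fact stationary.
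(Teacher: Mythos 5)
Your overall architecture (variational threshold $+$ concentration--compactness $+$ rigidity) matches the paper's, but three of your key steps are asserted rather than available, and one of them would not go through as stated. First, the ``Weinstein-type duality'' identifying the minimal standing-wave charge with the sharp constant in a covariant Gagliardo--Nirenberg inequality of the multiplicative form $\frac{g}{2}\|\phi\|_{L^4}^4 \leq \frac{\chg(\phi)}{c_{m,g}}\|D_x\phi\|_{L^2}^2$ is exactly the point where the gauge coupling breaks the standard argument: the functional $J(\phi)$ in \eqref{Jenergy} is \emph{not} homogeneous under amplitude scaling $\phi\mapsto\alpha\phi$, because $A_\theta$ scales like $\alpha^2$ inside $(m+A_\theta)^2$, so the usual Weinstein rescaling that converts a sharp GN constant into a ground-state charge does not apply. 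The paper's substitute is Lemma \ref{lem:7.5}: one shows directly that $\inf\{\chg(\phi): J(\phi)\leq 0\}=\inf\{\chg(\phi): J(\phi)=0\}$ by exhibiting an $\alpha_n\in(0,1]$ with $J(\alpha_n\phi_n)=0$, then proves attainment (Lemma \ref{lem:7.6}, where the loss of compactness comes from \emph{scaling}, not translation --- translations are already killed by equivariance, so your appeal to Strauss-type decay addresses the wrong obstruction), and finally shows via a Lagrange-multiplier computation (Lemma \ref{lem:7.7}) that the minimizer generates a standing wave. Your coercivity inequality is then replaced by the weaker but sufficient statement that $\chg(\phi)<c_{m,g}$ and $\phi\in H^1_m$ nontrivial force $E(\phi)>0$.

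Second, your rigidity step aims to show the critical element is, after modulation, a standing wave; this is harder than necessary and is not how the paper concludes. Below the threshold one never classifies the critical element: the self-similar scenario is excluded because the extra-regularity bounds force $\|\phi(t)\|_{\dot H^s}\lesssim t^{-s/2}\to 0$, hence $E(\phi)=0$, contradicting strict positivity; the global scenario is excluded by the localized virial identity \eqref{viriallocal}, which gives $\frac{d}{dt}I_R(\phi)\gtrsim E(\phi)>0$ against the uniform bound $|I_R(\phi)|\lesssim R$. Third, and most seriously, your argument applies $H^1$-level quantities (energy, virial) to a critical element that is a priori only in $L^2_m$; without the extra-regularity theory of \S\ref{sec:reg} (Theorems \ref{ExtraRss} and \ref{ExtraR}, which occupy the technical heart of the paper and require the frequency-localization analysis of the nonlocal terms $A_\theta$, $A_0$ from \S\ref{sec:freq}), neither $E(\phi_c)$ nor $I_R(\phi_c)$ is defined, and the whole rigidity step collapses. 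You should make the $L^2\to H^s$ upgrade of the critical element an explicit and central component of the proof.
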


The $L^4_{t,x}$ norm plays the role of the scattering norm. Our notions of blowup and scattering
are made precise in the remarks preceding Theorem \ref{thm:Cauchy}, which establishes
the Cauchy theory for \eqref{equiCSS} that is attainable using standard perturbative techniques.
For small data, the sign of $g$ plays no role, and indeed Theorem \ref{thm:Cauchy} 
applies to this case. In fact, all results of \S\S \ref{sec:eqCauchy}--\ref{sec:moravir} hold for any
$g \in \R$. It is only starting in \S \ref{sec:noaps} (in particular, Corollary \ref{cor:globap}) 
where the value of $g$ plays a role. The system \eqref{equiCSS} admits solitons
when $g \geq 1$ and $m \in \Z$ is nonnegative, 
and so in this sense $- \infty < g < 1$ is the natural defocusing parameter range.

The challenge is to prove Theorems \ref{thm:main}--\ref{thm:main3} for large data.
The first step is to reduce to special localized solutions.
Bourgain's induction-on-energy method for the
energy-critical NLS revealed the important role played by solutions
simultaneously localized in frequency and space, see \cite{Bo99}.
Kenig and Merle \cite{KeMe06, KeMe08} subsequently streamlined the arguments
reducing one's consideration to such solutions by means of a concentration-compactness argument.
Minimal-mass blowup solutions of the mass-critical NLS are studied in \cite{TaViZh08}.
We adopt a concentration-compactness argument, modeled closely after that of Killip, Tao, and Visan
\cite{KiTaVi09} for the radial 2-d cubic NLS.
Inspiration also comes from the work of Gustafson and Koo \cite{GuKo11}
on radial 2-d Schr\"odinger maps into the unit sphere, 
which, among other things, extends the arguments
of \cite{KiTaVi09} so as to handle a nonlocal term.

\begin{defin} \label{def:ap}
A solution $\phi$ with lifespan $I$ is said to be almost periodic modulo scaling if
there exist a frequency scale function $N : I \to \R^+$
and a compactness modulus function $C : \R^+ \to \R^+$ such that
\[
\int_{|x| \geq C(\eta) / N(t)} |\phi(t, x)|^2 dx \leq \eta
\]
and
\[
\int_{|\xi| \geq C(\eta) N(t)} |\hat{\phi}(t, \xi)|^2 d \xi \leq \eta
\]
for all $t \in I$ and $\eta > 0$.
\end{defin}
Here we have used $\hat{f}(\xi)$ to denote the Fourier transform of $f$ in the spatial variable $x \in \R^2$
only. We sometimes use the notation $\mathcal{F}(f)$ instead of $\hat{f}$.

\begin{rem}
Solutions of \eqref{CSS} are invariant under the symmetry group $G$ introduced
in \cite[Definition 1.6]{KiTaVi09}, which includes the scaling, rotation, translation, and Galilean symmetries
(the action of $G$ on $\phi$ is as specified in \cite{KiTaVi09} and can easily be extended to act on $A$ as well).
The equivariance ansatz \eqref{equian} breaks the translation and Galilean symmetries, leaving
us with scaling and rotational symmetry. This subgroup is denoted by $\Grad$ in
\cite[Definition 1.6]{KiTaVi09}, as its preserves spherical symmetry; in fact, it preserves $m$-equivariance
for any index $m \in \Z$. Because rotational symmetry corresponds
to the action of a compact symmetry group, it may be neglected for our purposes. 
In fact, it plays no role in 
\cite[Definition 1.14]{KiTaVi09}, which defines almost periodicity modulo $G$ and modulo $\Grad$.
\end{rem}

\begin{lem} \label{lem:blowup}
Suppose that the statement of Theorem \ref{thm:main} 
(or \ref{thm:main2}, \ref{thm:main3}) is not true. Then there exists
a critical element, i.e., a maximal-lifespan solution $\phi$ that is almost periodic
modulo scaling and that blows up both forward and backward in time.
Furthermore, this critical element can be taken to be $m$-equivariant.
We can also ensure that the lifespan $I$ and the frequency-scale function $N : I \to \R^+$
match one of the following two scenarios:
\begin{enumerate}
\item
(Self-similar solution) We have $I = (0, +\infty)$ and
\[
N(t) = t^{-1/2} \quad \text{for all } t \in I
\]
\item
(Global solution) We have $I = \R$ and
\[
\sup_{t \in \R} N(t) < \infty \quad \text{for all } t \in I
\]
\end{enumerate}
\end{lem}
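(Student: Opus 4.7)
The plan is to run a standard Kenig--Merle/Killip--Tao--Visan concentration-compactness reduction, adapted to the equivariant setting. Define, for $c > 0$,
\[
L(c) := \sup \bigl\{ \|\phi\|_{L^4_{t,x}(I \times \R^2)} : \phi \text{ a maximal-lifespan } m\text{-equivariant solution of } \eqref{equiCSS} \text{ with } \chg(\phi) \le c \bigr\}.
\]
The small-data Cauchy/stability theory (Theorem~\ref{thm:Cauchy}) gives $L(c) < \infty$ for small $c$, while the failure of Theorem~\ref{thm:main} (resp.\ \ref{thm:main2}, \ref{thm:main3}) forces some finite threshold $c_\star > 0$ (bounded by $c_{m,g}$ in the focusing cases) at which $L$ becomes infinite. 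I would then extract a sequence $\phi_n$ of $m$-equivariant maximal-lifespan solutions with $\chg(\phi_n) \to c_\star$ and $\|\phi_n\|_{L^4_{t,x}} \to \infty$, and translate in time so that the $L^4_{t,x}$ norm blows up in both time directions from $t = 0$.

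Next I would apply a linear profile decomposition to the $L^2_m$ sequence $\phi_n(0)$. Because both the equivariance ansatz and the subgroup $\Grad$ preserve $m$-equivariance, the only non-compact symmetry parameters present in the profiles are scaling and time translation; translation in $x$ and Galilean boosts are ruled out, so every profile stays in $L^2_m$. Each profile is evolved under the full nonlinear flow \eqref{equiCSS} (using Theorem~\ref{thm:Cauchy} together with the asymptotic decoupling in the Strichartz space) to produce nonlinear profiles whose sum approximates $\phi_n$ modulo an error small in the appropriate Strichartz norm. The stability theory for \eqref{equiCSS} then shows that if more than one profile had nontrivial charge, each profile would have charge strictly less than $c_\star$, hence finite $L^4_{t,x}$ norm, and the nonlinear superposition would have finite $L^4_{t,x}$ norm, contradicting $\|\phi_n\|_{L^4} \to \infty$. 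Thus exactly one profile survives, and its nonlinear evolution $\phi_\infty$ is an $m$-equivariant maximal-lifespan solution with $\chg(\phi_\infty) = c_\star$ that must blow up in both time directions (otherwise $\phi_n$ would inherit finite $L^4$ norm by stability). Almost periodicity modulo scaling of $\phi_\infty$ is then a direct consequence of the single-profile structure: any sequence of time slices, when rescaled by the appropriate $N(t_k)$, is precompact in $L^2_m$.

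The main obstacle in the reduction will be the linear profile decomposition and the stability theory at the level of \eqref{equiCSS} rather than of a pure NLS. The system carries the nonlocal, gauge-field terms $A_0$, $A_\theta$, which are bilinear (and quartic) in $\phi$ and are defined through $\Delta^{-1}$ and radial antiderivatives; one must verify that these terms decouple asymptotically along the scales of distinct profiles (so that the superposed profiles indeed approximate the nonlinear evolution of $\phi_n$) and that they behave continuously under the $L^4_{t,x}$ perturbation norm. The favorable point is that $A_\theta$ and $A_0$ enjoy pointwise bounds in terms of local $L^2$ mass of $\phi$, which scales as $\chg$, so their contributions are uniformly controlled by $c_\star$; this mirrors how Gustafson--Koo~\cite{GuKo11} handle the nonlocal term for Schr\"odinger maps, and it is precisely the place where I expect the most technical effort.

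Finally, to force $\phi_\infty$ into one of the two listed scenarios, I would follow the frequency-scale dichotomy of \cite{KiTaVi09}. If $\liminf_{t \to \sup I} N(t) > 0$ on a subsequence and similarly at the past endpoint, one may use the scaling symmetry to normalize and pass to a limit with $I = \R$ and $\sup_{t \in \R} N(t) < \infty$, producing scenario (ii). Otherwise $N(t) \to \infty$ as $t$ approaches one endpoint, and a rescaling argument together with the uniform almost-periodicity shows that the lifespan can be normalized to $(0, \infty)$ with $N(t) = t^{-1/2}$, giving the self-similar scenario (i). At each step the equivariance index $m$ is preserved since the rescaling and time-translation symmetries commute with $e^{im\theta}$, which is why the final critical element is genuinely $m$-equivariant.
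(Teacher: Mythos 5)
Your outline is correct and follows essentially the same route the paper intends: the paper gives no proof of this lemma, instead deferring to the concentration-compactness machinery of Kenig--Merle and Killip--Tao--Visan (with the symmetry group reduced to $\Grad$ because equivariance breaks translations and Galilean boosts, exactly as in your argument), together with the stability theory of Theorem \ref{thm:Cauchy} and the two-scenario reduction of \cite[Thm.~1.16]{KiTaVi09}. You have also correctly isolated the one genuinely new ingredient relative to the pure NLS case, namely the asymptotic decoupling of the nonlocal terms $A_\theta$, $A_0$ across profiles of separated scales, which is where the paper leans on the estimates of \S\ref{sec:eqCauchy} and the approach of \cite{GuKo11}.
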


Our strategy for proving Theorems \ref{thm:main}--\ref{thm:main3} is to show that the scenarios described in
Lemma \ref{lem:blowup} cannot occur, in the spirit of \cite{KiTaVi09, KiViZh08, GuKo11}.
The first step of the program is to establish that the solutions described by Lemma \ref{lem:blowup}
are special in that they enjoy extra regularity and in particular are in $H^s$ for each $s > 0$.
The energy \eqref{energy} is at the level of $H^1$, and its conservation can be exploited
in both scenarios. To rule out the global profile, we also use a localized virial identity.
This identity can also be adapted to handle the self-similar profile, as described in \cite[\S9]{KiTaVi09},
though we opt instead to rule out the self-similar profile using energy conservation.

The rest of this article is laid out as follows. In the next section, \S \ref{sec:eqCauchy}, we develop the basic
Cauchy theory for \eqref{equiCSS}. Next, in \S \ref{sec:freq}, we introduce the Littlewood-Paley theory
that we will require and we establish how frequency localizations of the nonlinearity $\Lambda(\phi)$, defined in
\eqref{Nphi}, depend upon frequency localizations of input functions $\phi$. Section \ref{sec:reg} establishes
extra regularity for almost periodic solutions, a key technical step in the large data theory.
In \S \ref{sec:moravir}, we establish virial and Morawetz identities. These play an important role
in \S \ref{sec:noaps}, which concludes the proof of Theorem \ref{thm:main} in the $g < 1$ case
by ruling out the blowup scenarios of Lemma \ref{lem:blowup}. In \S \ref{sec:ext}, we consider
the focusing problem, proving Theorems \ref{thm:main2} and \ref{thm:main3} along with some auxiliary results.

\section{The equivariant Cauchy theory} \label{sec:eqCauchy}

Throughout this section we assume that $\phi$ is $m$-equivariant.
A trivial consequence of this that we will repeatedly use is that $|\phi|^2$ is radial.
We assume that all spatial $L^p$ spaces are based on the 2-dimensional Lebesgue measure.

Define the operators $[\partial_r]^{-1}$, $[r^{-n} \bar{\partial}_r]^{-1}$, and $[r \partial_r]^{-1}$ by
\[
\begin{split}
[\partial_r]^{-1} &= - \int_r^\infty f(s) ds, \quad \quad
[r^{-n} \bar{\partial}_r]^{-1} f(r) = \int_0^r f(s) s^n ds \\
[r \partial_r]^{-1} f(r) &= - \int_r^\infty \frac1s f(s) ds
\end{split}
\]
Then straightforward arguments imply
\begin{align}
\| [r \partial_r]^{-1} f \|_{L^p} &\lesssim_p \| f \|_{L^p}, \quad \quad 1 \leq p < \infty \label{direct1} \\
\| r^{-n-1} [r^{-n} \bar{\partial}_r]^{-1} f \|_{L^p} &\lesssim_p \| f \|_{L^p}, \quad \quad 1 < p \leq \infty \label{direct2} \\
\| [\partial_r]^{-1} f \|_{L^2} &\lesssim \| f \|_{L^1} \label{direct3}
\end{align}

\begin{lem}[Bounds on $A_\theta$ terms] \label{lem:Atheta}
We have
\begin{align}
\| A_\theta \|_{L^\infty_{x}} &\lesssim \| \phi \|_{L^2_x}^2 \label{Athet1} \\
\| \frac1r A_\theta \|_{L^\infty_x} &\lesssim \| \phi \|_{L^4_{x}}^2 \label{Athet2} \\
\| \frac{A_\theta}{r^2} \|_{L^p_x} &\lesssim \| \phi \|_{L^{2p}_x}^2, \quad 1 < p \leq \infty \label{Athet3}
\end{align}
\end{lem}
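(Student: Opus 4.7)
The plan is to exploit the explicit representation of $A_\theta$ coming from the constraint $\partial_r A_\theta = -\frac12|\phi|^2 r$ in \eqref{equiCSS}, together with the boundary condition $A_\theta(0) = 0$ (forced by regularity at the origin under the equivariant ansatz), which yields
\[
A_\theta(r) = -\frac12 \int_0^r |\phi(s)|^2 s\, ds = -\frac12 [r^{-1}\bar\partial_r]^{-1}(|\phi|^2)(r).
\]
All three bounds will then follow from elementary one-dimensional integral estimates applied to this formula, combined with the fact that for a radial function $\|f\|_{L^p_x}^p \simeq \int_0^\infty |f(r)|^p r\,dr$.

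For \eqref{Athet1}, the crudest possible bound suffices: taking absolute values and extending the integration to all of $(0,\infty)$ gives $|A_\theta(r)| \le \frac12 \int_0^\infty |\phi|^2 s\, ds \lesssim \|\phi\|_{L^2_x}^2$, uniformly in $r$. For \eqref{Athet2}, I would apply Cauchy--Schwarz in the $s$ variable:
\[
\frac{|A_\theta(r)|}{r} \le \frac{1}{2r}\Bigl(\int_0^r |\phi|^4 s\, ds\Bigr)^{1/2}\Bigl(\int_0^r s\,ds\Bigr)^{1/2} \lesssim \Bigl(\int_0^\infty |\phi|^4 s\, ds\Bigr)^{1/2} \simeq \|\phi\|_{L^4_x}^2,
\]
and then take the supremum over $r$. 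The extra factor of $\sqrt{r^2/2}$ from the second integral exactly cancels the $1/r$ in front, which is the reason the $L^4$ norm appears on the right.

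For \eqref{Athet3}, the estimate is a direct application of \eqref{direct2} with $n=1$: that inequality reads $\|r^{-2}[r^{-1}\bar\partial_r]^{-1} f\|_{L^p} \lesssim_p \|f\|_{L^p}$ for $1 < p \le \infty$, and setting $f = |\phi|^2$ gives
\[
\Bigl\|\frac{A_\theta}{r^2}\Bigr\|_{L^p_x} \lesssim \bigl\| |\phi|^2 \bigr\|_{L^p_x} = \|\phi\|_{L^{2p}_x}^2.
\]
The restriction $p>1$ is the one already present in \eqref{direct2} and is sharp because of the Hardy-type nature of this operator.

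There is no serious obstacle here; the only mild subtlety is confirming the boundary condition $A_\theta(0)=0$ so that the antiderivative in \eqref{equiCSS} is the one integrating from $0$, rather than some other choice. This is forced by the equivariant ansatz \eqref{equian}: for $A_1,A_2$ to define a continuous vector field at the origin one needs $v(t,0)$ finite, and since $A_\theta = r^2 v$ (reading off from \eqref{A1A2->ArAtheta} with $v=A_\theta/r^2$ in the radial case), this gives $A_\theta(0)=0$. Once that is fixed, the lemma reduces to the three one-line estimates above.
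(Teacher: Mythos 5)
Your proposal is correct and follows essentially the same route as the paper: the explicit formula $A_\theta = -\frac12\int_0^r |\phi|^2 s\,ds$ with boundary condition $A_\theta(0)=0$, a trivial bound for \eqref{Athet1}, Cauchy--Schwarz for \eqref{Athet2}, and \eqref{direct2} with $n=1$ for \eqref{Athet3}. (Only a cosmetic slip: with the normalization of \eqref{equian} one has $A_\theta = r v$, not $r^2 v$, but this does not affect the conclusion $A_\theta(0)=0$.)
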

\begin{proof}
We start with
\begin{equation} \label{Atheta}
A_\theta = -\frac12 \int_0^r |\phi|^2 s ds
\end{equation}
which we obtain by integrating the $F_{r \theta}$ spatial curvature condition in \eqref{equiCSS}
($F_{r \theta}$ is given in \eqref{curvF} and simplifies under \eqref{equian}).
To justify the boundary condition, note that \eqref{A1A2->ArAtheta} implies that $A_\theta(r = 0) = 0$
so long as $A_1, A_2 \in L^\infty_{\mathrm{loc}}$. Moreover, in the Coulomb gauge, 
$A_1$ and $A_2$ exhibit $1/|x|$ decay at infinity and so from \eqref{ArAtheta->A1A2} 
we expect an $L^\infty$ bound for $A_\theta$ but not decay.
The right hand side of \eqref{Atheta} is bounded in absolute value by a constant times $\| \phi \|_{L^2_x}^2$,
which proves \eqref{Athet1}.

For the second inequality, we get using \eqref{Atheta} and Cauchy-Schwarz that
\[
| A_\theta |
\lesssim
\left( \int_0^\infty |\phi|^4 s ds \right)^{1/2} r
\]
Therefore
\[
| \frac1r A_\theta |
\lesssim
\left( \int_0^\infty |\phi|^4 s ds \right)^{1/2}
\]

Finally, to prove \eqref{Athet3}, we use \eqref{direct2} with $n = 1$, first writing
\[
\frac{|A_\theta|}{r^2}
=
\frac12 \cdot \frac{1}{r^2} \int_0^r |\phi|^2 s ds 
=
\frac12 r^{-2} [r^{-1} \bar{\partial}_r]^{-1} |\phi|^2
\]
Therefore
\[
\| \frac{A_\theta}{r^2} \|_{L^p_x} 
\lesssim 
\| r^{-2} [r^{-1} \bar{\partial}_r]^{-1} |\phi|^2 \|_{L^p_x}
\lesssim  \| |\phi|^2 \|_{L^p_x} = \| \phi \|_{L^{2p}_x}^2
\]
\end{proof}

\begin{lem}[Bounds on $A_0$] \label{lem:A0}
Write $A_0 = A_0^{(1)} + A_0^{(2)}$, where
\[
A_0^{(1)} := - \int_r^\infty \frac{A_\theta}{s} |\phi|^2 ds,
\quad \quad
A_0^{(2)} := - \int_r^\infty \frac{m}{s} |\phi|^2 ds
\]
Then
\begin{equation} \label{A01}
\begin{split}
\| A_0^{(1)} \|_{L^1_t L^\infty_x} 
&\lesssim \| \phi \|_{L^4_{t,x}}^4
\\
\| A_0^{(1)} \|_{L^2_{t, x}}
&\lesssim \| \phi \|_{L^\infty_t L^2_x}^2
\| \phi \|_{L^4_{t,x}}^2
, \quad 1 \leq p < \infty
\end{split}
\end{equation}
and
\begin{equation} \label{A02}
\| A_0^{(2)} \|_{L^2_x} \lesssim |m| \| \phi \|_{L^4_x}^2
\end{equation}
\end{lem}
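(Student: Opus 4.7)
The plan is to recognize both pieces of $A_0$ as instances of the operator $[r\partial_r]^{-1}$ introduced at the start of this section. Indeed, $A_0^{(1)} = [r\partial_r]^{-1}(A_\theta |\phi|^2)$ and $A_0^{(2)} = m[r\partial_r]^{-1}(|\phi|^2)$. With this observation, two of the three inequalities become essentially automatic from the operator bound \eqref{direct1}, and only the $L^1_t L^\infty_x$ estimate requires any real thought.

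For the $L^2_x$ estimate on $A_0^{(2)}$, I would simply apply \eqref{direct1} with $p = 2$ to obtain $\|A_0^{(2)}\|_{L^2_x} \lesssim |m|\,\||\phi|^2\|_{L^2_x} = |m|\,\|\phi\|_{L^4_x}^2$. For the $L^2_{t,x}$ estimate on $A_0^{(1)}$, first apply \eqref{direct1} with $p = 2$ in the spatial variable, then Hölder with $\|A_\theta\|_{L^\infty_x}\|\phi\|_{L^4_x}^2$, and invoke \eqref{Athet1} for the $L^\infty_x$ bound on $A_\theta$. Integrating in time, putting $L^\infty_t$ on the $\|\phi\|_{L^2_x}^2$ factor and $L^2_t$ on $\|\phi\|_{L^4_x}^2$, yields the claim.

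The main obstacle is the $L^1_tL^\infty_x$ estimate for $A_0^{(1)}$. A naive bound using $\|A_\theta\|_{L^\infty_x}\lesssim \|\phi\|_{L^2_x}^2$ leaves the integral $\int_0^\infty |\phi|^2 s^{-1}\,ds$, which need not be finite, so one must use both the pointwise size and the small-$s$ vanishing of $A_\theta$ simultaneously. My approach is to rewrite the one-dimensional radial integral as a planar one: since $A_\theta$ and $|\phi|$ are radial,
\[
|A_0^{(1)}(t,r)| \leq \int_0^\infty \frac{|A_\theta(t,s)|}{s^2}\,|\phi(t,s)|^2 s\,ds = \frac{1}{2\pi}\int_{\R^2}\frac{|A_\theta|}{|x|^2}|\phi|^2\,dx.
\]
Cauchy–Schwarz then gives $\|A_\theta/|x|^2\|_{L^2_x}\|\phi\|_{L^4_x}^2$, and \eqref{Athet3} with $p = 2$ controls the first factor by $\|\phi\|_{L^4_x}^2$. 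This yields a uniform pointwise bound $\|A_0^{(1)}(t,\cdot)\|_{L^\infty_x}\lesssim \|\phi(t,\cdot)\|_{L^4_x}^4$, and integrating in time produces the desired $L^1_tL^\infty_x$ estimate.
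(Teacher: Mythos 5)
Your proposal is correct and follows essentially the same route as the paper: the $L^2$ bounds come from \eqref{direct1} with $p=2$ combined with \eqref{Athet1}, and the $L^1_tL^\infty_x$ bound comes from rewriting $A_0^{(1)}(r) = -\int_r^\infty (A_\theta/s^2)\,|\phi|^2\,s\,ds$, applying Cauchy--Schwarz with respect to planar measure, and controlling $\|A_\theta/r^2\|_{L^2_x}$ via \eqref{Athet3}. Your explicit remark about why the naive $\|A_\theta\|_{L^\infty}$ bound fails is a helpful clarification but does not change the argument.
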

\begin{proof}
The behavior of $A_0$ is independent of the coordinate system.
In particular, it is natural to assume that it decays to zero at infinity as shown in \cite{BeBoSa95}, where
certain $L^p$ bounds are also established under the assumption of sufficient regularity. 
This motivates integrating the $F_{r0}$ curvature condition in \eqref{equiCSS} from infinity,
which is justified below.

To establish the first inequality of \eqref{A01}, rewrite $A_0^{(1)}(r)$ as
\[
A_0^{(1)}(r) = -\int_r^\infty \frac{A_\theta}{s^2} |\phi|^2 s ds
\]
Then, bounding $A_\theta(s) / s^2$ in $L^2$ using \eqref{Athet3}
and putting each $\phi$ in $L^4$, we obtain
\[
|A_0^{(1)}(r)| \leq \| \frac{A_\theta}{s^2} \|_{L^2_x} \| \phi \|_{L^4}^2 \lesssim \| \phi \|_{L^4_x}^4
\]
The bound is independent of $r$, and integrating in time yields
\[
\| A_0^{(1)} \|_{L^1_t L^\infty_x} \lesssim \| \phi \|_{L^4_{t,x}}^4
\]
The second inequality of \eqref{A01} follows from \eqref{Athet1} and \eqref{direct1} with $p = 2$.

To establish \eqref{A02}, we use \eqref{direct1} with $f = m |\phi|^2$ and $p = 2$.
\end{proof}

\begin{lem}[Quadratic bounds]
We have
\begin{align}
\| \frac{1}{r^2} A_\theta^2 \|_{L^1_t L^\infty_x}
&\lesssim
\| \phi \|_{L^4_{t,x}}^4
\\
\| \frac{1}{r^2} A_\theta^2 \|_{L^2_{t,x}}
&\lesssim
\| \phi \|_{L^\infty_t L^2_x}^2 \| \phi \|_{L^4_{t,x}}^2
\end{align}
\end{lem}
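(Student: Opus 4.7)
The plan is to estimate $\tfrac{A_\theta^2}{r^2}$ pointwise in $x$ by factoring the quotient in two different ways, and then to take time norms; the bounds from Lemma \ref{lem:Atheta} supply everything needed.

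For the $L^1_t L^\infty_x$ bound, I would use the symmetric factorization $\tfrac{A_\theta^2}{r^2} = \left(\tfrac{A_\theta}{r}\right)^2$ and apply \eqref{Athet2} to obtain, pointwise in $t$,
\[
\left\| \frac{A_\theta^2}{r^2} \right\|_{L^\infty_x} \leq \left\| \frac{A_\theta}{r} \right\|_{L^\infty_x}^2 \lesssim \| \phi \|_{L^4_x}^4.
\]
Integrating in time then gives $\| A_\theta^2/r^2 \|_{L^1_t L^\infty_x} \lesssim \| \phi \|_{L^4_t L^4_x}^4 = \| \phi \|_{L^4_{t,x}}^4$.

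For the $L^2_{t,x}$ bound, I would use the asymmetric factorization $\tfrac{A_\theta^2}{r^2} = A_\theta \cdot \tfrac{A_\theta}{r^2}$, so that H\"older in $x$ yields
\[
\left\| \frac{A_\theta^2}{r^2} \right\|_{L^2_x} \leq \| A_\theta \|_{L^\infty_x} \left\| \frac{A_\theta}{r^2} \right\|_{L^2_x} \lesssim \| \phi \|_{L^2_x}^2 \| \phi \|_{L^4_x}^2
\]
by \eqref{Athet1} and the $p = 2$ case of \eqref{Athet3}. Pulling $\| \phi(t) \|_{L^2_x}^2$ out of the time integral by its supremum and recognizing the remaining $L^2_t L^4_x$ factor as $\| \phi \|_{L^4_{t,x}}^2$ gives the claim.

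There is really no obstacle here: the only decision is how to split the product. The symmetric factorization is natural for the $L^\infty_x$ estimate, while the asymmetric factorization is forced in the $L^2_x$ estimate because $A_\theta$ by itself need not decay at spatial infinity (see the discussion after \eqref{Atheta}) and so cannot be placed in any $L^p_x$ with $p < \infty$; all of the $1/r^2$ weight must therefore sit on one factor.
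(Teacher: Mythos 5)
Your proof is correct and follows exactly the paper's route: the first bound via squaring \eqref{Athet2} and integrating in time, and the second via the factorization $A_\theta \cdot (A_\theta/r^2)$ estimated with \eqref{Athet1} and the $p=2$ case of \eqref{Athet3}. Your closing remark about why the asymmetric split is forced in the $L^2_x$ estimate is a nice addition but not a deviation.
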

\begin{proof}
The first bound follows from \eqref{Athet2} and Cauchy-Schwarz.
The second is a consequence of \eqref{Athet1} and \eqref{Athet3} with $p = 2$.
\end{proof}
Let
\begin{equation} \label{Nphi}
\Lambda(\phi) = 2 m \frac{A_\theta}{r^2} \phi + A_0 \phi + \frac{1}{r^2} A_\theta^2 \phi - g |\phi|^2 \phi
\end{equation}
denote the nonlinearity of the evolution equation of \eqref{equiCSS}.

\begin{rem}
The bounds established in the preceding lemmas are very flexible 
and allow us to control all pieces of the nonlinearity $\Lambda(\phi)$ in $L^{4/3}_{t,x}$ and
some pieces of it in $L^1_t L^2_x$. 
\end{rem}

\begin{lem}[Strichartz estimates] \label{lem:Strichartz}
Let $(i \partial_t + \Delta) u = f$ on a time interval $I$ with $t _0 \in I$ 
and $u(t_0) = u_0$. 
Call a pair $(q, r)$ of exponents admissible if $2 \leq q, r \leq \infty$,
$\frac1q + \frac{1}{r} = \frac12$ and $(q, r) \neq (2, \infty)$.
Let $(q, r)$ and $(\tilde{q}, \tilde{r})$ be admissible pairs of exponents.
Then
\[
\| u \|_{L^\infty_t L^2_x (I \times \R^2)}
+
\| u \|_{L^q_t L^r_x(I \times \R^2)}
\lesssim
\| u_0 \|_{L^2_x(\R^2)}
+
\| f \|_{L^{\tilde{q}^\prime}_t L^{\tilde{r}^\prime}_x(I \times \R^2)}
\]
where the prime indicates the dual exponent, i.e., $\frac{1}{q^\prime} :=
1 - \frac{1}{q}$.
\end{lem}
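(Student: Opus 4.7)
The statement is the classical (non-endpoint) Strichartz estimate for the free 2D Schr\"odinger equation, so my plan is to invoke the Keel-Tao machinery directly; the potential terms on the right-hand side of the semilinear equation play no role here, since they are absorbed into the forcing $f$. First I would record the two basic ingredients: the $L^2$ conservation $\|e^{it\Delta} u_0\|_{L^2_x} = \|u_0\|_{L^2_x}$ and the pointwise dispersive decay $\|e^{it\Delta} u_0\|_{L^\infty_x} \lesssim |t|^{-1}\|u_0\|_{L^1_x}$ for $t \neq 0$, the latter being immediate from the explicit Schr\"odinger kernel in two dimensions. Complex interpolation between these two endpoints yields $\|e^{it\Delta} u_0\|_{L^r_x} \lesssim |t|^{-(1 - 2/r)}\|u_0\|_{L^{r'}_x}$ for $2 \leq r \leq \infty$.

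Next, to obtain the homogeneous Strichartz bound $\|e^{it\Delta} u_0\|_{L^q_t L^r_x} \lesssim \|u_0\|_{L^2_x}$, I would run the standard $TT^{*}$ argument. By duality this is equivalent to a mapping property $L^{q'}_t L^{r'}_x \to L^q_t L^r_x$ for the operator with kernel $e^{i(t-s)\Delta}$; applying Minkowski in space and then the dispersive bound in time reduces the question to the one-dimensional Hardy-Littlewood-Sobolev inequality on the line with weight $|t-s|^{-(1-2/r)}$. A short computation shows that HLS applies precisely when $\frac{1}{q} + \frac{1}{r} = \frac{1}{2}$ together with the strict inequality $q > 2$, which is exactly the non-endpoint admissibility built into the hypothesis. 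In two spatial dimensions the endpoint $(q, r) = (2, \infty)$ is known to fail for the free Schr\"odinger equation, so its exclusion is necessary; the other trivial endpoint $(\infty, 2)$ already follows from $L^2$ conservation.

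For the inhomogeneous estimate with two possibly distinct admissible pairs $(q, r)$ and $(\tilde q, \tilde r)$, I would combine the homogeneous estimate with its adjoint to first control the full-line operator $f \mapsto \int_{\R} e^{i(t-s)\Delta} f(s)\, ds$ as a map $L^{\tilde q'}_t L^{\tilde r'}_x \to L^q_t L^r_x$, and then invoke the Christ-Kiselev lemma to pass to the retarded Duhamel operator $\int_{t_0}^t e^{i(t-s)\Delta} f(s)\, ds$. Since both admissibility hypotheses force $q > \tilde q'$, the Christ-Kiselev hypothesis is satisfied. The main potential obstacle --- the 2D endpoint $(2, \infty)$ --- is assumed away, so no Keel-Tao-type atomic decomposition is needed; for the detailed verification I would simply cite the general theorem of Keel-Tao, of which this lemma is a direct specialization.
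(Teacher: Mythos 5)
Your proposal is correct: it is the standard non-endpoint Strichartz argument (dispersive estimate, $TT^{*}$ with Hardy--Littlewood--Sobolev, then Christ--Kiselev for mismatched admissible pairs), and the exclusion of $(2,\infty)$ is handled properly. The paper gives no proof of its own --- it simply cites Yajima and Ginibre--Velo --- and your sketch is precisely the argument underlying those references (and the Keel--Tao formulation), so there is nothing further to reconcile.
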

These estimates are established in \cite{Ya87, GiVe92}.
The only admissible pair that we use in this section is $(q, r) = (4, 4)$.
In the usual way, one may intersect Strichartz spaces. Their dual is then a sum-type space;
we use this property in \S \ref{sec:reg}.
In that section we also use the endpoint estimate, proved in \cite{St01, Ta00}:
\begin{lem}[Endpoint Strichartz esimate] \label{lem:endStrichartz}
Let $(i \partial_t + \Delta) u = f$ on a time interval $I$ with $t _0 \in I$ 
and $u(t_0) = u_0$, and suppose that $m \in \Z$ and $u, f \in L^2_m(\R^2)$.
Let $(q, r)$ be an admissible pair
of exponents. Then
\[
\| u \|_{L^2_t L^\infty_x(I \times \R^2)} \lesssim
\| u_0 \|_{L^2_x(\R^2)} +
\| f \|_{L^{q^\prime}_t L^{r^\prime}_x (I \times \R^2)}
\]
\end{lem}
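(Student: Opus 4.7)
The plan is to first establish the homogeneous endpoint
\[
\|e^{it\Delta} u_0\|_{L^2_t L^\infty_x(\R \times \R^2)} \lesssim \|u_0\|_{L^2_x}, \qquad u_0 \in L^2_m,
\]
and then recover the inhomogeneous version against any dual admissible pair $L^{q'}_t L^{r'}_x$ by $TT^*$ duality combined with the Christ--Kiselev lemma. The Christ--Kiselev step is available because every admissible pair $(q,r)$ in two spatial dimensions (apart from the forbidden $(2,\infty)$) satisfies $q > 2$, hence $q' < 2$, giving the strict inequality between time exponents that the retarded-integral reduction requires.

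For the homogeneous endpoint, I would diagonalize $e^{it\Delta}$ on the $m$-equivariant subspace via Bessel functions: writing $u_0 = e^{im\theta} g(r)$,
\[
e^{it\Delta} u_0 (r e^{i\theta}) = \frac{e^{im\theta} e^{ir^2/4t}}{2it} \int_0^\infty e^{i\rho^2/4t} J_{|m|}\!\left(\frac{r\rho}{2t}\right) g(\rho)\, \rho \, d\rho.
\]
The key analytic input is the uniform Bessel bound $|J_{|m|}(\xi)| \lesssim \min(|\xi|^{|m|}, |\xi|^{-1/2})$, which supplies $|\xi|^{-1/2}$ decay of the kernel in the radial direction and a vanishing of order $|\xi|^{|m|}$ at the origin. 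I would then dyadically decompose $u_0 = \sum_N P_N u_0$ and set up a bilinear $TT^*$ argument on dyadic pieces: the $|\xi|^{-1/2}$ Bessel decay at infinity produces off-diagonal bounds between scales $N$ and $N'$ of the form $\min(N/N', N'/N)^\alpha$ for some $\alpha > 0$, after which Schur's test closes the sum over dyadic scales and yields the endpoint bound with no logarithmic loss.

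This mirrors Tao's treatment of the radial case in \cite{Ta00} together with its extension to angular frequency modes in \cite{St01}; the argument adapts to general equivariance index $m$ with only cosmetic changes, since the large-argument asymptotics of $J_{|m|}$ are uniform in $m$ (up to constants harmless here). The main obstacle is exactly this summation step at the endpoint. In the generic $L^2$ setting the Keel--Tao real-interpolation argument loses a logarithmic factor and the bound $(2,\infty)$ fails, as witnessed by Knapp-type tube examples. Equivariance eliminates the concentrating counterexamples by orthogonality across angular modes, and the Bessel factor $J_{|m|}$ quantifies that improvement in a form that exactly matches what Schur's test needs, allowing the endpoint to be recovered.
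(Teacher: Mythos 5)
Your proposal matches the paper's treatment: the paper simply cites \cite{Ta00, St01} for the radial endpoint and notes (Remark \ref{rem:radial}) that the only change for general $m$ is replacing $J_0$ by $J_m$ in the Hankel-transform representation, which has the same large-argument asymptotics and better behavior at the origin --- precisely the Bessel bound and adaptation you describe. The $TT^*$/Christ--Kiselev reduction to the homogeneous endpoint and the dyadic summation you sketch are the content of those cited proofs, so your route is essentially the paper's.
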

Though the endpoint estimate was established for radial functions, the
proof may be adapted to equivariant functions in a straightforward way
by noting properties of Bessel functions 
(see, for instance, Remark \ref{rem:radial} for related comments).

\begin{lem}[Control of the nonlinearity] \label{lem:nonlinearity}
We have
\begin{equation} \label{Nbound}
\| \Lambda(\phi) \|_{L^{\frac43}} \lesssim  \| \phi \|_{L^4}^3
\end{equation}
and
\begin{equation} \label{Ndiffbound}
\| \Lambda(\phi) - \Lambda(\tilde{\phi}) \|_{L^{\frac43}} 
\lesssim \| \phi - \tilde{\phi} \|_{L^4}
( \| \phi \|_{L^4}^2 + \| \tilde{\phi} \|_{L^4}^2 )
\end{equation}
\end{lem}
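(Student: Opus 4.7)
The strategy is to split $\Lambda(\phi)$ into the four summands appearing in \eqref{Nphi} and to estimate each one separately by combining H\"older's inequality with the auxiliary bounds of Lemmas \ref{lem:Atheta} and \ref{lem:A0} together with the quadratic bound lemma stated just above. Since $L^{4/3}$ is dual to $L^4$, the guiding principle at each step is to place one factor of $\phi$ in $L^4$ and to control the coefficient in a mixed norm whose exponents combine with $L^4$ via H\"older to yield $L^{4/3}$.

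Concretely, for the cubic self-interaction \eqref{Nbound} is immediate, since $\| |\phi|^2\phi\|_{L^{4/3}} = \|\phi\|_{L^4}^3$. For $r^{-2}A_\theta\phi$ I would apply H\"older in the form $L^2_x \cdot L^4_x$ pointwise in time, invoke \eqref{Athet3} with $p=2$ to obtain $\|A_\theta(t)/r^2\|_{L^2_x} \lesssim \|\phi(t)\|_{L^4_x}^2$, and then apply H\"older again in time to assemble $\|\phi\|_{L^4}^3$. The quadratic piece $r^{-2} A_\theta^2\phi$ is handled similarly by factoring $A_\theta^2/r^2 = A_\theta \cdot (A_\theta/r^2)$ and pairing \eqref{Athet1} or \eqref{Athet2} with \eqref{Athet3}. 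For $A_0\phi$ I would use the splitting $A_0 = A_0^{(1)} + A_0^{(2)}$ of Lemma \ref{lem:A0}: the bound \eqref{A01} pairs cleanly with $\phi \in L^4$ via $\|A_0^{(1)}\phi\|_{L^{4/3}} \leq \|A_0^{(1)}\|_{L^2}\|\phi\|_{L^4}$, while \eqref{A02} controls $A_0^{(2)}$ pointwise in time. Any $\|\phi\|_{L^\infty_t L^2_x}$ factors produced along the way are bounded by the conserved charge and absorbed into the implicit constant.

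For the difference estimate \eqref{Ndiffbound}, I would exploit the multilinear structure of each summand. Since $A_\theta$ and $A_0$ are, respectively, bilinear and quadrilinear in $\phi$ through the integral representations developed in \S \ref{sec:eqCauchy}, the difference $\Lambda(\phi) - \Lambda(\tilde\phi)$ telescopes into a finite sum of multilinear terms, each containing exactly one factor of $\phi - \tilde\phi$ and the remaining factors equal to either $\phi$ or $\tilde\phi$. Each such term is then controlled by the same H\"older and auxiliary-lemma chain used for \eqref{Nbound}, which after symmetrization yields the bound with $\|\phi - \tilde\phi\|_{L^4}$ times $(\|\phi\|_{L^4}^2 + \|\tilde\phi\|_{L^4}^2)$.

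The step I expect to require the most care is the treatment of the $A_0^{(1)}\phi$ term, both in \eqref{Nbound} and especially in its multilinearization for \eqref{Ndiffbound}, since $A_0^{(1)}$ is itself a nonlocal quartic expression in $\phi$ and re-expanding it so that the $L^2$ estimate from Lemma \ref{lem:A0} applies verbatim to every piece of the telescoped difference requires careful bookkeeping of which of the four slots of $A_0^{(1)}$ carries the factor $\phi - \tilde\phi$. Once this expansion is organized, however, each of the resulting pieces reduces cleanly to the same H\"older pairings that control the other summands, and the proof concludes.
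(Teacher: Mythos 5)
Your proposal is correct and follows essentially the same route as the paper: the same term-by-term splitting of $\Lambda(\phi)$, the same H\"older pairing $L^2_{t,x}\cdot L^4_{t,x}\to L^{4/3}_{t,x}$ fed by \eqref{Athet3}, Lemma \ref{lem:A0}, and the quadratic bounds (with $\|\phi\|_{L^\infty_t L^2_x}$ and $|m|$ absorbed into the constant), and the same telescoping of the multilinear structure for \eqref{Ndiffbound}. No gaps.
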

\begin{proof}
The proof is an easy consequence of Strichartz estimates,
charge conservation, and the previous lemmas.
In particular, we have
\[
\begin{split}
\| \frac{2}{r^2} m A_\theta \phi \|_{L^{\frac43}_{t,x}}
&\lesssim
|m| \| \frac{1}{r^2} A_\theta \|_{L^2_{t,x}} \| \phi \|_{L^4_{t,x}}
\lesssim
|m| \| \phi \|_{L^4_{t,x}}^3
\\
\| A_0 \phi \|_{L^{\frac43}_{t,x}} 
&\lesssim \| A_0 \|_{L^2_{t,x}} \| \phi \|_{L^4_{t,x}}
\lesssim (|m| + \| \phi \|_{L^\infty_t L^2_x}^2) \| \phi \|_{L^4_{t,x}}^3
\\
\| \frac{1}{r^2} A_\theta^2 \phi \|_{L^{\frac43}_{t,x}} 
&\lesssim \| \frac{1}{r^2} A_\theta^2 \|_{L^2_{t,x}}
\| \phi \|_{L^4_{t,x}}
\lesssim
\| \phi \|_{L^\infty_t L^2_x}^2 \| \phi \|_{L^4_{t,x}}^3
\\
\|g |\phi|^2 \phi \|_{L^{\frac43}_{t,x}} 
&\leq |g| \| \phi\|_{L^4_{t,x}}^3
\end{split}
\]
which establishes \eqref{Nbound}.

The second inequality is easy to show for the nonlinear term $g |\phi|^2 \phi$ by using
the observation
\begin{equation} \label{nonobs}
\left\lvert |\phi|^2 \phi - | \tilde{\phi} |^2 \tilde{\phi} \right\rvert
\lesssim
\left( |\phi|^2 + |\tilde{\phi}|^2 \right) \lvert \phi - \tilde{\phi} \rvert
\end{equation}
To see that others are similar, note that bounds \eqref{Athet1}--\eqref{Athet3}
for $A_\theta = A_\theta(\phi)$ are linear in $|\phi|^2$.
This is also true of the bound for $A_0^{(2)}$ in Lemma \ref{lem:A0}.
Applying further decompositions similar to \eqref{nonobs} 
allows one to handle the higher-order terms
$\frac{1}{r^2} A_\theta^2$ and $A_0^{(1)}$.
\end{proof}

In our analysis, the $L^4_{t,x}$ norm plays the role of a scattering norm.
If $\phi: I \times \R^2 \to \C$ is a solution of \eqref{equiCSS} on an open time interval $I$,
then we say that $\phi$ blows up forward in time if 
$\| \phi \|_{L^4_{t,x}((I \cap [t, \infty)) \times \R^2)} = \infty$ for all $t \in I$.
Similarly, we say that $\phi$ blows up backward in time if
$\| \phi \|_{L^4_{t,x}((I \cap (-\infty, t]) \times \R^2)} = \infty$ for all $t \in I$.

Let $\phi_+ \in L^2$. We say that a solution $\phi : I \times \R^2 \to \C$ scatters
forward in time to $\phi_+$ if and only if $\sup I = + \infty$ and
$\lim_{t \to \infty} \| \phi(t) - e^{i t \Delta} \phi_+ \|_{L^2} = 0$.
Similarly, we say that a solution $\phi : I \times \R^2 \to \C$ scatters
backward in time to $\phi_- \in L^2$ if and only if $\inf I = - \infty$ and
$\lim_{t \to - \infty} \| \phi(t) - e^{i t \Delta} \phi_- \|_{L^2} = 0$.

\begin{thm}[Cauchy theory] \label{thm:Cauchy}
Let $m \in \Z$, $\phi_0 \in L^2_m(\R^2)$, and $t_0 \in \R$.
There exists a unique maximal lifespan solution $\phi : I \times \R^2 \to \C$, $\phi \in L^\infty_I L^2_m$,
with $t_0 \in I$, the maximal time interval, $\phi(t_0) = \phi_0$, and with the following additional properties:
\begin{enumerate}
\item
(Local existence) $I$ is open.
\item
(Scattering) 
If $\phi$ does not blow up forward in time, then $\sup I = + \infty$
and $\phi$ scatters forward in time to $e^{i t \Delta} \phi_+$ for some $\phi_+ \in L^2_m$.
If $\phi$ does not blow up backward in time, then $\inf I = - \infty$ and $\phi$
scatters backward in time.
\item
(Small data scattering) 
There exists $\varepsilon > 0$ such that if $\| \phi_0 \|_{L^2} \leq \varepsilon$, then
$\| \phi \|_{L^4_{t,x}} \lesssim \| \phi_0 \|_{L^2_x}$.
In particular, $I = \R$ and the solution scatters both forward and backward in time.
\item
(Uniformly continuous dependence)
For every $A > 0$ and $\varepsilon > 0$ there is a $\delta > 0$ such that if $\phi$
is an $m$-equivariant solution satisfying $\| \phi \|_{L^4_{t,x}(J \times \R^2)} \leq A$ with $t_0 \in J$
and if $\tilde{\phi}_0 \in L^2_m$ satisfies $\| \phi_0 - \tilde{\phi}_0 \|_{L^2_x} \leq \delta$, 
then there exists an $m$-equivariant solution $\tilde{\phi}$
such that $\| \phi - \tilde{\phi} \|_{L^4_{t,x}(J \times \R^2)} \leq \varepsilon$ and
$\| \phi(t) - \tilde{\phi}(t) \|_{L^2} \leq \varepsilon$ for all $t \in J$.
\item
(Stability)
For every $A > 0$ and $\varepsilon > 0$ there exists $\delta > 0$ such that if
$\| \phi \|_{L^4_{t,x}(J \times \R^2)} \leq A$, $\phi$ is $m$-equivariant and approximates \eqref{equiCSS}
in that $\| (i \partial_t + \Delta) \phi - \Lambda(\phi) \|_{L^{4/3}_{t,x}(J \times \R^2)} \leq \delta$,
$t_0 \in J$, and $\tilde{\phi}_0 \in L^2_m$ satisfies
$\| e^{i (t - t_0)\Delta} (\phi(t_0) - \tilde{\phi}_0) \|_{L^4_{t,x}(J \times \R^2)} \leq \delta$, then there exists an $m$-equivariant solution $\tilde{\phi}$
with $\tilde{\phi}(t_0) = \tilde{\phi}_0$ and $\| \phi - \tilde{\phi} \|_{L^4_{t,x}(J \times \R^2)} \leq \varepsilon$.
\end{enumerate}
\end{thm}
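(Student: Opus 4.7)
The plan is to treat \eqref{equiCSS} as a semilinear Schr\"odinger equation $(i\partial_t + \Delta)\phi = \Lambda(\phi)$ with a nonlocal nonlinearity and to run standard Strichartz-based contraction and perturbation arguments in the Strichartz space $L^\infty_t L^2_m \cap L^4_{t,x}$. Lemmas \ref{lem:Strichartz} and \ref{lem:nonlinearity} provide essentially all the analytic input; charge conservation absorbs the $\|\phi\|_{L^\infty_t L^2_x}^2$ factors implicit in the bounds for $A_0$ and $r^{-2}A_\theta^2$.

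For parts (i) and (iii), I would set up the Duhamel map
\[
\Phi(\phi)(t) := e^{i(t-t_0)\Delta}\phi_0 - i\int_{t_0}^t e^{i(t-s)\Delta}\Lambda(\phi)(s)\,ds
\]
on a ball $\{\phi : \|\phi\|_{L^4_{t,x}(I\times\R^2)} \leq 2\eta\}$ under the $L^4_{t,x}$ metric. Lemmas \ref{lem:Strichartz} and \ref{lem:nonlinearity} give
\[
\|\Phi(\phi)\|_{L^4_{t,x}(I\times\R^2)} \leq C\|e^{i(t-t_0)\Delta}\phi_0\|_{L^4_{t,x}(I\times\R^2)} + C(m,\|\phi_0\|_{L^2})\,\eta^3,
\]
with a matching difference estimate from \eqref{Ndiffbound}. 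For (iii), one takes $I=\R$ and uses $\|e^{it\Delta}\phi_0\|_{L^4_{t,x}}\lesssim \|\phi_0\|_{L^2}$ to close the fixed point when $\|\phi_0\|_{L^2}$ is small. For (i), finiteness of $\|e^{it\Delta}\phi_0\|_{L^4_{t,x}(\R\times\R^2)}$ plus absolute continuity of the $L^4_{t,x}$ integral produces an open interval $I\ni t_0$ on which the free-flow norm is at most $\eta$, giving a local solution; iterating yields a maximal open lifespan. Equivariance is preserved by uniqueness: the rotated function $\phi(t,R_{\theta_0}x)$ solves the same system with datum $e^{im\theta_0}\phi_0$, so it coincides with $e^{im\theta_0}\phi(t,x)$.

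For the scattering alternative (ii), suppose $\|\phi\|_{L^4_{t,x}([t_0,\sup I)\times\R^2)}<\infty$. Lemma \ref{lem:nonlinearity} and dual Strichartz make $\int_{t_0}^T e^{-i(s-t_0)\Delta}\Lambda(\phi)(s)\,ds$ Cauchy in $L^2_m$ as $T\to\sup I$, so $\phi(T)-e^{i(T-t_0)\Delta}\phi_0$ has an $L^2_m$ limit. If $\sup I<\infty$, this limit serves as initial data for a local solution extending $\phi$ past $\sup I$, contradicting maximality; hence $\sup I = +\infty$, the Duhamel limit defines the scattering state $\phi_+\in L^2_m$, and a tail estimate gives $\|\phi(t)-e^{i(t-t_0)\Delta}\phi_+\|_{L^2}\to 0$. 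The backward case is symmetric.

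Parts (iv) and (v) follow from a standard bootstrap: I partition $J$ into finitely many subintervals $J_1,\dots,J_K$ (where $K=K(A,\eta)$) on which $\|\phi\|_{L^4_{t,x}(J_k\times\R^2)}\leq\eta$, write $w:=\tilde\phi-\phi$, and study
\[
(i\partial_t+\Delta)w = \Lambda(\tilde\phi)-\Lambda(\phi)+e,\qquad e := (i\partial_t+\Delta)\tilde\phi - \Lambda(\tilde\phi).
\]
Strichartz together with \eqref{Ndiffbound} closes a contraction-style estimate for $\|w\|_{L^\infty_tL^2\cap L^4_{t,x}(J_k\times\R^2)}$ in terms of $\|w(t_k)\|_{L^2}$, the free-flow error $\|e^{i(t-t_k)\Delta}w(t_k)\|_{L^4_{t,x}(J_k)}$, and $\|e\|_{L^{4/3}_{t,x}(J_k)}$; concatenating across $J_1,\dots,J_K$ yields (iv) and (v) with constants depending on $A$ through $K$. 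The main technical obstacle is the implicit appearance of $\|\phi\|_{L^\infty_t L^2_x}$ (and $\|\tilde\phi\|_{L^\infty_tL^2_x}$) in the constants of Lemma \ref{lem:nonlinearity}: for true solutions, charge conservation pins this to $\|\phi_0\|_{L^2}$, but in the stability argument $\tilde\phi$ is only an approximate solution, so one must run a parallel $L^\infty_tL^2_x$ bound for $\tilde\phi$ (via Strichartz applied to the equation it actually satisfies) alongside the $L^4_{t,x}$ bootstrap, using the approximate-solution error $e$ and the closeness of initial data to control $\|\tilde\phi\|_{L^\infty_tL^2_x}$ uniformly on $J$.
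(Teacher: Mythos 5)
Your proposal is correct and follows essentially the same route as the paper, which proves Theorem \ref{thm:Cauchy} by exactly this standard Strichartz-based contraction, scattering, and perturbation scheme built on \eqref{Nbound} and \eqref{Ndiffbound}. Your additional observation that the implicit $\|\phi\|_{L^\infty_t L^2_x}$ dependence in Lemma \ref{lem:nonlinearity} must be tracked separately for the approximate solution in the stability argument is a genuine (and correctly resolved) point of care that the paper's terse proof leaves implicit.
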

\begin{proof}
The local existence statement follows from \eqref{Nbound} and a standard iteration argument.
The scattering claim (2)
follows from \eqref{Nbound} and from linearizing near the asymptotic states.
The remaining claims follow from \eqref{Ndiffbound} by standard arguments.
\end{proof}

\section{Frequency localization} \label{sec:freq}

The purpose of this section is to relate Littlewood-Paley frequency-localizations
of terms of $\Lambda(\phi)$, defined in \eqref{Nphi}, to frequency localizations of $\phi$.
This is done in a way that respects the $L^p$ estimates established in the previous section.

We introduce Littlewood-Paley multipliers in the usual way. In particular, let
$\psi : \R^+ \to [0, 1]$, $\psi \in C^\infty$, equal one on $[0, 1]$ and zero on $[2, \infty)$.
For each $\lambda > 0$, define
\[
\mathcal{F}(P_{\leq \lambda} f)(\xi) := \psi(|\xi| \lambda^{-1}) \hat{f}(\xi), \quad
\mathcal{F}(P_{> \lambda} f)(\xi) := \left(1 - \psi(|\xi| \lambda^{-1}) \right) \hat{f}(\xi)
\]
\[
\widehat{P_\lambda f}(\xi) := \left( \psi(|\xi| \lambda^{-1}) - \psi(2 |\xi| \lambda^{-1}) \right) \hat{f}(\xi)
\]
We similarly define $P_{\leq \lambda}$ and $P_{\geq \lambda}$. Also, for $\lambda > \mu > 0$, set
\[
P_{\mu < \cdot \leq \lambda} := P_{\leq \lambda} - P_{\leq \mu}
\]
The standard $L^p$ Bernstein estimates hold for these multipliers, e.g., see
\cite[Lemma 2.1]{KiTaVi09}.

We record for reference the useful relation
\begin{equation} \label{xgradx}
\begin{split}
\mathcal{F}(r \partial_r f) =
\mathcal{F}(x \cdot \nabla f) 
&= \mathcal{F}(x^j \partial_j f)
\\
&= i \partial_{\xi_j} (i \xi_j \hat{f})
= - 2 \hat{f} - \xi \cdot \nabla_{\xi} \hat{f} = - 2 \hat{f} - \rho \partial_\rho \hat{f}
= - \rho^{-1} \partial_\rho ( \rho^2 \hat{f} )
\end{split}
\end{equation}
which is valid when the dimension of the underlying space is $2$. 
Here and throughout we set $\rho := |\xi|$.
We also set
\begin{equation} \label{f(r)}
f(r) := - \frac12 |\phi|^2
\end{equation}
for short and note the following equalities, which
follow from \eqref{Atheta}:
\begin{equation} \label{Athetachain}
\frac1r \partial_r A_\theta = 
\left(\frac1r +  \partial_r \right) \left( \frac1r A_\theta \right) =
(2 + r \partial_r) \left(\frac{1}{r^2} A_\theta\right) = f(r)
\end{equation}

\begin{lem}[Fourier transforms of $A_\theta$, $r^{-2} A_\theta$] \label{lem:FourierA}
Let $f$ be given by \eqref{f(r)}. Then
\begin{equation} \label{Athetahat}
\hat{A}_\theta = \rho^{-1} \partial_\rho \hat{f}
\end{equation}
and
\begin{equation} \label{Abighat}
\mathcal{F}(r^{-2} A_\theta) = - [\rho \partial_\rho]^{-1} \hat{f}
\end{equation}
\end{lem}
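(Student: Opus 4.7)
My plan is to read off both identities from the chain \eqref{Athetachain} after taking the Fourier transform, using only the two-dimensional identity \eqref{xgradx}. No Fourier transform needs to be computed from scratch.

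First I would prove \eqref{Abighat}. Starting from the rightmost equality in \eqref{Athetachain}, namely $(2 + r\partial_r)(r^{-2}A_\theta) = f$, I apply $\mathcal{F}$. Since \eqref{xgradx} expands as $\mathcal{F}(r\partial_r g) = -2\hat g - \rho\partial_\rho \hat g$, the constant pieces cancel and we obtain
\[
-\rho\partial_\rho\, \mathcal{F}(r^{-2}A_\theta) = \hat f,
\]
which is \eqref{Abighat} once we invert $\rho\partial_\rho$, using the convention set at the beginning of Section~\ref{sec:eqCauchy} (integration from $\rho=\infty$, in analogy with the definition of $[r\partial_r]^{-1}$).

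Next I would deduce \eqref{Athetahat} from \eqref{Abighat}. Write $A_\theta = r^2 \cdot (r^{-2}A_\theta)$ and recall that multiplication by $r^2$ in physical space corresponds to $-\Delta_\xi$ in frequency; for a radial function $g$ on $\R^2$, $-\Delta_\xi \hat g = -\rho^{-1}\partial_\rho(\rho\partial_\rho \hat g)$. Applying this with $g = r^{-2}A_\theta$ and then using the relation $\rho\partial_\rho\mathcal{F}(r^{-2}A_\theta) = -\hat f$ obtained in Step~1, the inner $\rho\partial_\rho$ exactly cancels the inverse from \eqref{Abighat} and we arrive at
\[
\hat A_\theta = -\rho^{-1}\partial_\rho\bigl(-\hat f\bigr) = \rho^{-1}\partial_\rho \hat f,
\]
which is \eqref{Athetahat}. (Alternatively, one can derive \eqref{Athetahat} directly from $\partial_r A_\theta = rf$ via the 2D radial Fourier/Hankel machinery, but piggybacking on Step~1 seems much shorter.)

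The main bookkeeping obstacle is giving a rigorous meaning to $[\rho\partial_\rho]^{-1}$. A priori $f = -\tfrac12|\phi|^2 \in L^1$, so $\hat f$ is only bounded and continuous and need not decay, which makes the integral $\int_\rho^\infty \hat f(\sigma)\, d\sigma/\sigma$ delicate. The clean way around this is to carry out the derivation first for Schwartz $\phi$, where each step is justified classically and pointwise, and then extend to general $\phi \in L^2_m$ by density, using the $L^p$ bounds on $A_\theta$, $r^{-1}A_\theta$, and $r^{-2}A_\theta$ from Lemma~\ref{lem:Atheta}. Once \eqref{Athetahat} is established in this form, it can itself be taken as the rigorous underpinning of the more symbolic \eqref{Abighat}.
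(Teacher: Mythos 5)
Your argument is correct, and for \eqref{Abighat} it is exactly the paper's proof: set $F = r^{-2}A_\theta$, apply $\mathcal{F}$ to $(2 + r\partial_r)F = f$, observe via \eqref{xgradx} that the constant terms cancel to leave $\hat f = -\rho\partial_\rho \widehat{F}$, and invert. Where you diverge is \eqref{Athetahat}: the paper's primary derivation goes through the Cartesian Coulomb-gauge formulas, writing $\hat A_\theta = -i\partial_{\xi_2}\hat A_1 + i\partial_{\xi_1}\hat A_2$ with $A_1 = \tfrac12\Delta^{-1}\partial_2|\phi|^2$, $A_2 = -\tfrac12\Delta^{-1}\partial_1|\phi|^2$, and simplifying to $-\tfrac12\rho^{-1}\partial_\rho\mathcal{F}(|\phi|^2)$; you instead bootstrap from \eqref{Abighat} via $A_\theta = r^2\cdot(r^{-2}A_\theta)$ and the correspondence $r^2 \leftrightarrow -\Delta_\xi$ together with the radial form of the Laplacian. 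This is essentially the one-line alternative the paper itself records (``multiply \eqref{Athetachain} by $r^2$ and use \eqref{xgradx}''), just routed through $F$ rather than through $A_\theta$ directly; it buys you a purely radial computation with no Cartesian bookkeeping, at the cost of having to invoke the distributional identity $\mathcal{F}(|x|^2 g) = -\Delta_\xi\hat g$ for the merely bounded, non-decaying function $A_\theta$. On the justification of $[\rho\partial_\rho]^{-1}$, the paper is more economical than your Schwartz-plus-density scheme: it simply notes that $\phi \in L^4_{t,x}$ gives $f \in L^2_x$ for a.e.\ $t$, so \eqref{direct1} with $p=2$ applies directly to $\hat f$ and makes $[\rho\partial_\rho]^{-1}\hat f$ a well-defined $L^2$ function; you may want to record that observation, since it is what pins down the boundary condition at $\rho = \infty$ without any limiting argument.
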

\begin{proof}
We invoke \eqref{A1A2->ArAtheta} to get
\[
\hat{A}_\theta(\xi) = - i \partial_{\xi_2} \hat{A}_1 + i \partial_{\xi_1} \hat{A}_2
\]
where we interpret the derivatives in the sense of distributions.
Upon expansion we write
\[
\hat{A}_\theta(\xi)
=
- \frac12 \partial_{\xi_2} \left( \frac{\xi_2}{|\xi|^2} \mathcal{F}(|\phi|^2) \right)
-
\frac12 \partial_{\xi_1} \left( \frac{\xi_1}{|\xi|^2} \mathcal{F}(|\phi|^2) \right)
\]
This simplifies to
\[
\hat{A}_\theta(\xi) = -\frac12 \frac{\xi_j}{|\xi|^2} \partial_{\xi_j} \mathcal{F}(|\phi|^2)
\]
so that
\[
\hat{A}_\theta(\rho) = - \frac{1}{2 \rho} \partial_\rho \mathcal{F}(|\phi|^2)
\]
which establishes \eqref{Athetahat};
alternatively, one may multiply \eqref{Athetachain} by $r^2$ and use \eqref{xgradx}.
To show \eqref{Abighat}, let
\[
F(r) := \frac{A_\theta}{r^2} = \frac{1}{r^2} \int_0^r f(s) s ds
\]
where the equality follows from \eqref{Atheta}.
This function is differentiable a.e.~and satisfies
\[
(2 + r \partial_r) F(r) = f(r)
\]
as noted in \eqref{Athetachain}.
Taking Fourier transforms and using \eqref{xgradx}, we obtain
\[
\hat{f} = 2 \widehat{F} + \nabla_{\xi} \cdot \xi \widehat{F}
=
- \rho \partial_\rho \widehat{F}
\]
Because $\phi \in L^4_{t,x}$, it follows that $\phi \in L^4_x$ for a.e.~$t$
and hence $f \in L^2_x$ for a.e.~$t$.
Therefore, writing $\widehat{F} = -[\rho \partial_\rho]^{-1} \hat{f}$,
we may invoke \eqref{direct1} for a.e.~$t$ with $p = 2$
and so
conclude that the Fourier transform of $A_\theta / r^2$ has the desired localization properties.
\end{proof}

\begin{lem}[Fourier transform of $A_0^{(1)}$]
Let $G(r) = r^{-2} A_\theta |\phi|^2$. Then
\[
\hat{A}_0^{(1)} = \rho^{-1} \partial_\rho \widehat{G}
\]
\end{lem}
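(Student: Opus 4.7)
The plan is to parallel the reasoning used for $\hat{A}_\theta$ in Lemma \ref{lem:FourierA}. First I would differentiate the defining integral
\[
A_0^{(1)}(r) = -\int_r^\infty \frac{A_\theta(s)}{s}|\phi(s)|^2\, ds = -\int_r^\infty s\, G(s)\, ds
\]
to obtain the pointwise relation $\partial_r A_0^{(1)} = r\, G(r)$ for a.e.~$r$. This is the exact structural analogue of $\partial_r A_\theta = r f$ that drove the $A_\theta$ computation. Multiplying by $r$ puts it in the form $r \partial_r A_0^{(1)} = r^2 G$, an identity between radial functions that I intend to Fourier transform.

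Next I would apply the 2-d identity \eqref{xgradx} to the left-hand side, getting
\[
\mathcal{F}(r \partial_r A_0^{(1)}) = -\rho^{-1}\partial_\rho\bigl(\rho^2\, \hat A_0^{(1)}\bigr).
\]
For the right-hand side, since $G$ is radial I would use $\mathcal{F}(|x|^2 G) = -\Delta_\xi \widehat G$ together with the radial reduction $\Delta_\xi = \rho^{-1}\partial_\rho(\rho\, \partial_\rho\, \cdot\,)$ to write
\[
\mathcal{F}(r^2 G) = -\rho^{-1}\partial_\rho\bigl(\rho\, \partial_\rho \widehat G\bigr).
\]
Equating the two expressions and integrating once in $\rho$ yields $\rho^2\, \hat A_0^{(1)} - \rho\, \partial_\rho \widehat G = C$ for some constant $C$. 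Evaluating at $\rho = 0$ forces $C = 0$: the first term vanishes there because $\hat A_0^{(1)} \in L^2_\rho$ by the second inequality of \eqref{A01}, and the second vanishes because $G \in L^1$ (so that $\widehat G$ is continuous with $\rho\, \partial_\rho \widehat G \to 0$ at the origin), with $G \in L^1$ following from \eqref{Athet3} (with $p=2$) for $r^{-2} A_\theta$ combined with Cauchy-Schwarz against $|\phi|^2$. Dividing through by $\rho^2$ gives $\hat A_0^{(1)} = \rho^{-1} \partial_\rho \widehat G$, as claimed.

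The main obstacle I anticipate is precisely this distributional bookkeeping at $\rho = 0$, since the identity is purely algebraic once one accepts that the constant of integration is zero. If the boundary analysis proves delicate, I would fall back on the equivalent derivation using the $[\partial_r]^{-1}$ representation $A_0^{(1)} = [\partial_r]^{-1}(r G)$ and mimic verbatim the second (cleaner) derivation of $\hat A_\theta$ given in the proof of Lemma \ref{lem:FourierA}, namely multiplying \eqref{Athetachain}'s analog by $r^2$ and applying \eqref{xgradx}. Either way the computation is the direct transcription of the $A_\theta$ argument with $(A_\theta, f) \rightsquigarrow (A_0^{(1)}, G)$; the only genuinely new input is that $G$ has enough regularity and decay to render the Fourier calculus legitimate, which is already available from Lemmas \ref{lem:Atheta} and \ref{lem:A0}.
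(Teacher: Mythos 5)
Your argument is correct and is essentially the paper's own computation: the paper packages it as $\Delta A_0^{(1)} = \frac{1}{r}\partial_r(r^2 G) = (2 + r\partial_r)G$ followed by \eqref{xgradx} and division by $-\rho^2$ (which absorbs your integration constant into the decay-at-infinity normalization of $\Delta^{-1}$), and your first-order version $r\partial_r A_0^{(1)} = r^2 G$ plus one integration in $\rho$ is exactly the ``alternative'' derivation the paper itself sketches for $\hat{A}_\theta$. The only soft spot is the appeal to $\hat{A}_0^{(1)} \in L^2$ to evaluate the constant at $\rho = 0$ (an $L^2$ function has no pointwise value there); it is cleaner to observe that $A_0^{(1)} = [r\partial_r]^{-1}(A_\theta |\phi|^2) \in L^1_x$ by \eqref{direct1} with $p = 1$ and \eqref{Athet1}, so that $\hat{A}_0^{(1)}$ is continuous and bounded and $\rho^2 \hat{A}_0^{(1)} \to 0$ pointwise.
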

\begin{proof}
Note that
\[
r \partial_r A_0^{(1)} = A_\theta |\phi|^2
\]
a.e.~so that in particular
\begin{equation} \label{LapA01p1}
\frac1r \partial_r A_0^{(1)} = \frac{A_\theta}{r^2} |\phi|^2
\end{equation}
From this we also obtain
\begin{equation} \label{LapA01p2}
\partial_r^2 A_0^{(1)} = - \frac{A_\theta}{r^2} |\phi|^2 + \frac{1}{r} \partial_r (A_\theta |\phi|^2)
\end{equation}
which is valid in the sense of distributions.
Combining \eqref{LapA01p1} and \eqref{LapA01p2} and using the fact that $A_0^{(1)}$
is radial, we conclude
\[
A_0^{(1)} = \Delta^{-1} \left[ \frac1r \partial_r (r^2 G) \right] = \Delta^{-1} (2 + r \partial_r) G(r)
\]
Invoking \eqref{xgradx} (with the roles of $r$ and $\rho$ reversed), we get
\[
\hat{A}_0^{(1)} = -\frac{1}{\rho^2} \left(- \rho \partial_\rho \widehat{G} \right) =
 \frac{1}{\rho} \partial_\rho \widehat{G}
\]
\end{proof}

\begin{lem}[Fourier transform of $A_0^{(2)}$]
The following holds:
\[
P_N \left([r \partial_r]^{-1} |P_{< N} \phi|^2\right) = 0
\]
\end{lem}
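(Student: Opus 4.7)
The plan is to combine two facts: a standard paraproduct-type frequency separation for $|P_{<N}\phi|^2$, together with the claim that the radial operator $[r\partial_r]^{-1}$ preserves Fourier support on radial functions. Together these force $[r\partial_r]^{-1}|P_{<N}\phi|^2$ to have Fourier support disjoint from the annular region where the symbol of $P_N$ lives, yielding the identity at once.

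For the first fact, note that since $\phi$ is $m$-equivariant and $P_{<N}$ is a radial Fourier multiplier, $P_{<N}\phi$ is again $m$-equivariant, so $|P_{<N}\phi|^2$ is radial. Its Fourier transform is the convolution of the Fourier transforms of $P_{<N}\phi$ and $\overline{P_{<N}\phi}$; by Minkowski addition of supports, it is contained in a ball whose radius, under the Littlewood--Paley conventions of the paper, is strictly smaller than the inner radius $N/2$ of the annular support of the symbol of $P_N$.

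For the second fact, let $g$ be radial with $\hat g$ vanishing outside $|\xi|\le M$, and set $h := [r\partial_r]^{-1}g$, so that by construction $r\partial_r h = g$. Taking Fourier transforms and applying \eqref{xgradx} gives
\[
-\rho^{-1}\partial_\rho\bigl(\rho^2 \hat h(\rho)\bigr) \;=\; \hat g(\rho)
\]
Integrating in $\rho$ from infinity yields
\[
\rho^2 \hat h(\rho) \;=\; \int_\rho^\infty \sigma\,\hat g(\sigma)\,d\sigma
\]
whose right-hand side vanishes for $\rho>M$. Hence $\hat h$ is supported in $|\xi|\le M$ as well. Applying this with $g = |P_{<N}\phi|^2$ and the Fourier-support bound from the previous paragraph gives $P_N([r\partial_r]^{-1}|P_{<N}\phi|^2)=0$.

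The step I expect to be most delicate is justifying the boundary condition $\rho^2 \hat h(\rho) \to 0$ as $\rho\to\infty$, which is what lets one integrate from infinity and discard the constant of integration. Were that constant nonzero, $\hat h$ would carry a tail of the form $K/\rho^2$ outside $|\xi|\le M$, which as a tempered distribution on $\R^2$ represents the Fourier transform of a nontrivial logarithmic singularity and so fails to lie in any $L^p$ with $p<\infty$. Since $g=|P_{<N}\phi|^2\in L^p$ for $p\in[1,\infty)$ and \eqref{direct1} then gives $h\in L^p$, this possibility is excluded and the calculation closes.
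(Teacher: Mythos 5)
Your route is genuinely different from the paper's: you work directly with the radial representation $A_0^{(2)}=m[r\partial_r]^{-1}|\phi|^2$ and try to show that $[r\partial_r]^{-1}$ preserves compact Fourier support, whereas the paper passes to the Cartesian identity $\Delta A_0^{(2)}=\Im(Q_{12}(\bar\phi,\phi))$, so that the nonlocal operator becomes the honest Fourier multiplier $\Delta^{-1}$ (harmless on the annulus where the symbol of $P_N$ lives) applied to a bilinear expression, and the support statement reduces to Minkowski addition of supports. Unfortunately your version has a genuine gap at exactly the step you flag as delicate. The constant of integration $L:=\lim_{\rho\to\infty}\rho^2\hat h(\rho)$ is \emph{not} forced to vanish by $L^p$ considerations: the discarded tail is $L\rho^{-2}$ on $\{\rho>M\}$, which lies in $L^p_\xi(\R^2)$ for every $p>1$, so its inverse Fourier transform lies in $L^q_x$ for $q\in[2,\infty)$ (the logarithmic singularity of $\Delta^{-1}$ comes from $\rho^{-2}$ near $\rho=0$, not from its tail; and in any case $\log|x|$ is locally in every $L^p$, $p<\infty$, consistent with \eqref{direct1}). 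Worse, the support-preservation property you are trying to prove is false for general radial $g$ with $\hat g$ compactly supported: integrating $\partial_\rho(\rho^2\hat h)=-\rho\hat g$ from the origin instead gives $\rho^2\hat h(\rho)=-\int_0^\rho\sigma\hat g(\sigma)\,d\sigma$, whence $L=-\int_0^\infty\sigma\hat g(\sigma)\,d\sigma$, which is a nonzero multiple of $g(0)$. Equivalently, if $g(0)\neq 0$ then $h=-\int_r^\infty g(s)s^{-1}ds$ has a $\log r$ singularity at the origin and $\hat h$ carries a nontrivial $\rho^{-2}$ tail.

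The missing ingredient is therefore the equivariance: for $m\neq 0$ one has $P_{<N}\phi(0)=0$, so $g(0)=|P_{<N}\phi(0)|^2=0$ and $L=0$, while for $m=0$ the term $A_0^{(2)}$ vanishes identically because of the prefactor $m$. With that input your computation closes; without it, it does not, and this is precisely the subtlety the paper's $Q_{12}$ detour is designed to avoid (the remark following the lemma explains why one must nonetheless return to the radial representation when estimating the resulting paraproduct pieces in $L^2$). Separately, note that your claim that the Fourier support of $|P_{<N}\phi|^2$ lies in a ball of radius less than $N/2$ is quantitatively wrong under the stated conventions — the Minkowski sum reaches out to roughly $4N$ — so the identity only holds once the inputs are restricted to frequencies sufficiently far below $N$; this imprecision is shared with the paper's own statement and is harmless in the application, where the low-frequency cutoff is $A/10$ against an output frequency $A$.
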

\begin{proof}
The term $A_0^{(2)}$ is nonzero only in the nonradial equivariant case.
In particular, we have
\[
r \partial_r A_0^{(2)} = m |\phi|^2
\]
a.e.~from the representation given in Lemma \ref{lem:A0} and
\begin{equation} \label{A02Rad}
\Delta A_0^{(2)} = m \frac1r \partial_r |\phi|^2
\end{equation}
in the sense of distributions.
The Cartesian coordinate representation
\begin{equation} \label{A02Cart}
\Delta A_0^{(2)} = \Im(Q_{12}(\bar{\phi}, \phi)),
\end{equation}
however, is more convenient for our purposes here.
In particular, we see immediately that
\begin{equation} \label{A02Horse}
P_N \Im(Q_{12}(P_{< N} \bar{\phi}, P_{< N} \phi)) = 0
\end{equation}
so that any contribution to $P_N A_0^{(2)}$ must come from input $\phi$-frequencies
of at least frequency $N$.
\end{proof}
\begin{rem}
Together \eqref{A02Cart} and \eqref{A02Horse} suggest splitting each $\phi$ input in the right-hand side
of \eqref{A02Cart} into a sum of Littlewood-Paley frequency localizations.
As $Q_{12}(\cdot, \cdot)$ is linear in each argument separately, there are some cross terms
to handle, e.g., terms of the form $\Im(Q_{12}(\overline{P_J \phi}, P_K \phi))$ with ranges
$J$ and $K$ not equal.
Whereas the Cartesian representation is well-suited for revealing the frequency localization,
it is the radial representation \eqref{A02Rad} that is used in Lemma \ref{lem:A0} in
proving the $L^2$ estimate of that lemma, which does not hold for arbitrary (non-equivariant) $L^2$ data.
Therefore, in order to take advantage of this frequency decomposition,
we need to ensure that we can apply the $L^2$ estimate to terms of the form
$\Im(Q_{12}(\overline{P_J \phi}, P_K \phi))$.
Note that if $\phi$ is $m$-equivariant, then so are $P_J \phi$, $P_K \phi$, so that
both inputs of $\Im(Q_{12}(\cdot, \cdot))$ are $m$-equivariant.
In particular, if both $\phi$ and $\psi$ are $m$-equivariant, then
\[
\Im(Q_{12}(\bar{\phi}, \psi)) = m \frac{1}{r} \partial_r \Re(\bar{\phi} \psi),
\]
and so we may use \eqref{direct1} as in the proof of \eqref{A02} of Lemma \ref{lem:A0}.
\end{rem}

\section{Extra regularity} \label{sec:reg}

\subsection{The self-similar case}

Our goal in this section is to show that
self-similar minimal solutions enjoy extra regularity.
\begin{thm}\label{ExtraRss} 
Let $\phi$ be a self-similar critical $m$-equivariant solution of \eqref{equiCSS}, 
almost periodic modulo scaling, 
with lifespan $I = (0, +\infty)$, $N(t) = t^{-1/2}$ for $t \in I$. Then, for each $s \geq 0$, 
$\phi \in L^\infty_t H^s_m (\R \times \R^2)$.
\end{thm}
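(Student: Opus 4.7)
Set $\mathcal{A}(N) := \sup_{t\in I}\|P_{\geq N}\phi(t)\|_{L^2_x}$. The goal is to show that $\mathcal{A}(N)$ decays faster than any polynomial in $N$; this immediately yields $\phi \in L^\infty_t H^s_m$ for every $s \geq 0$. I would follow the Duhamel/bootstrap scheme of Killip--Tao--Visan \cite{KiTaVi09}, adapted to the nonlocal Chern--Simons nonlinearity using the Fourier identities of Section \ref{sec:freq}. The key preliminary observation is that since $N(t) = t^{-1/2} \to 0$ as $t \to +\infty$, Definition \ref{def:ap} forces $\|P_{\geq N}\phi(t)\|_{L^2_x} \to 0$ as $t \to +\infty$ for each fixed $N > 0$. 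Combined with Duhamel's formula and the unitarity of the free propagator, this produces the forward representation
\begin{equation*}
P_{\geq N}\phi(t) \;=\; i\int_t^{\infty} e^{i(t-s)\Delta}\,P_{\geq N}\Lambda(\phi)(s)\,ds, \qquad t \in I,
\end{equation*}
which eliminates the linear-data term and reduces the task to estimating a high-frequency projection of the nonlinearity.

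\textbf{Controlling the nonlinearity.} Decompose $\Lambda(\phi)$ as in \eqref{Nphi} into the local cubic $g|\phi|^2\phi$ and the nonlocal pieces involving $A_\theta$ and $A_0$. For the cubic term, a paraproduct decomposition plus H\"older and Bernstein bounds control $P_{\geq N}(|\phi|^2\phi)$ by $\|P_{\gtrsim N}\phi\|_{L^4_{t,x}}\|\phi\|_{L^4_{t,x}}^2$. For the nonlocal pieces the key tools are the Fourier identities $\hat{A}_\theta = \rho^{-1}\partial_\rho\hat{f}$, $\mathcal{F}(r^{-2}A_\theta) = -[\rho\partial_\rho]^{-1}\hat{f}$, and $\hat{A}_0^{(1)} = \rho^{-1}\partial_\rho\widehat{G}$, together with the cancellation $P_N[r\partial_r]^{-1}|P_{<N}\phi|^2 = 0$ and its Cartesian counterpart $P_N\,\Im(Q_{12}(\overline{P_{<N}\phi},P_{<N}\phi)) = 0$. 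These reduce $P_{\geq N}$ of each nonlocal piece to terms in which at least one $\phi$-factor carries frequency $\gtrsim N$. Combining with the Strichartz estimates of Lemmas \ref{lem:Strichartz}--\ref{lem:endStrichartz} (the endpoint $L^2_t L^\infty_x$ piece being crucial for the potential factors) and the pointwise bounds of Lemmas \ref{lem:Atheta}--\ref{lem:A0}, I expect a schematic nonlinearity estimate of the form
\begin{equation*}
\bigl\|P_{\geq N}\Lambda(\phi)\bigr\|_{L^{4/3}_{t,x}(J)} \;\lesssim\; \bigl\|P_{\gtrsim N}\phi\bigr\|_{L^4_{t,x}(J)}\,\bigl(\|\phi\|_{L^4_{t,x}(J)}^2 + \|\phi\|_{L^\infty_t L^2_x}^2\bigr)
\end{equation*}
on each time slab $J \subset I$.

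\textbf{Bootstrap and main obstacle.} Inserting the above bound into the Duhamel representation and applying Strichartz yields a self-improving inequality
\begin{equation*}
\mathcal{A}(N) \;\leq\; C\eta\,\mathcal{A}(N/K) + E(N),
\end{equation*}
where the smallness factor $\eta$ is produced by choosing the frequency threshold large in terms of the compactness modulus $C(\eta)$ of Definition \ref{def:ap} and by exploiting that $\|\phi\|_{L^4_{t,x}}$ is small on far-out time slabs (since $N(t) \to 0$), and where $E(N)$ is a controllable error that vanishes as $N \to \infty$. Iterating over dyadic scales produces an initial polynomial decay $\mathcal{A}(N) \lesssim N^{-\sigma}$ for some $\sigma > 0$; reinserting this improved bound into the nonlinearity estimate essentially doubles the decay exponent, and after finitely many iterations the decay surpasses any prescribed $s > 0$. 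The main obstacle will be the analysis of the quartic nonlocal term $A_0^{(1)}\phi$, whose Fourier structure $\hat{A}_0^{(1)} = \rho^{-1}\partial_\rho\widehat{G}$ with $G = r^{-2}A_\theta|\phi|^2$ is a nested convolution in frequency, and a careful multi-step paraproduct decomposition is required to certify that a high-frequency output genuinely forces some factor of $\phi$ to be at frequency $\gtrsim N$; here the $m$-equivariant $L^2$ bound of Lemma \ref{lem:A0}, valid only for equivariant inputs as explained in the remark following that lemma, is essential. A secondary technical point is justifying convergence of the $\int_t^\infty$ integral, which will follow from the qualitative decay of $\|P_{\geq N}\phi(t)\|_{L^2}$ as $t \to \infty$ combined with Strichartz bounds on unit-ratio time slabs afforded by Theorem \ref{thm:Cauchy}.
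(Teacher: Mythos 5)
Your proposal transplants the machinery for the \emph{global} scenario ($N(t)\lesssim 1$, forward-in-time no-waste Duhamel, incoming/outgoing decomposition) to the self-similar one, and this breaks at the very first step. The quantity $\mathcal{A}(N)=\sup_{t\in I}\|P_{\geq N}\phi(t)\|_{L^2_x}$ does not decay in $N$: here $N(t)=t^{-1/2}\to\infty$ as $t\to 0^+$, so almost periodicity modulo scaling forces the solution to concentrate at frequencies $\sim t^{-1/2}$, and for any fixed $N$ one has $\|P_{\geq N}\phi(t)\|_{L^2}\to\|\phi\|_{L^2}$ as $t\to 0^+$. Hence $\mathcal{A}(N)\gtrsim \chg(\phi)^{1/2}$ for every $N$, and the stated goal that $\mathcal{A}(N)$ decay faster than any polynomial is false. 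For the same reason the smallness inputs you invoke --- decay of $\|P_{\geq N}\phi(t)\|_{L^2}$, smallness of $\|\phi\|_{L^4}$ on far-out time slabs, and a factor $\eta$ obtained ``by choosing the frequency threshold large in terms of $C(\eta)$'' --- are available only for $t$ bounded away from $0$; the compactness modulus controls frequencies relative to $N(t)$, not absolutely. The correct bookkeeping (what the paper does, following \cite{KiTaVi09}) ties the frequency cutoff to the self-similar scale: one works with $\mathcal{M}(A)=\sup_T\|P_{>AT^{-1/2}}\phi(T)\|_{L^2}$ and with $\mathcal{S}(A)$, $\mathcal{N}(A)$ measured on dyadic slabs $[T,2T]$, proves the recursion of Lemma \ref{NLEstimate}, and concludes $\mathcal{M}(A)\lesssim A^{-s}$. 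Note that this yields $\|\phi(t)\|_{\dot H^s}\lesssim t^{-s/2}$ rather than a $t$-uniform bound, which is all that can be true here.

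Two further points would need repair even after switching to the scaling-adapted quantities. First, you never identify the source of the quantitative decay factor in the case of one high input with all remaining inputs low: the paper obtains the $A^{-1/10}$ gain from the equivariant improved Strichartz estimate $\|P_Ne^{it\Delta}f\|_{L^q}\lesssim N^{1-\frac{4}{q}}\|f\|_{L^2}$ for $q\geq \frac{10}{3}$ (Shao, Guo--Wang), combined with Bernstein on the low-frequency factor; the ordinary endpoint $L^2_tL^\infty_x$ estimate you cite does not produce this gain. Second, your schematic nonlinear estimate places $P_{\geq N}\Lambda(\phi)$ only in $L^{4/3}_{t,x}$; the quintic term $A_0^{(1)}\phi$ with the high frequency landing inside $A_\theta$ is precisely the term that cannot be closed there, which is why the paper measures the nonlinearity in the sum space $L^{4/3}_{t,x}+L^1_tL^2_x$ and estimates that contribution in $L^1_tL^2_x$ using the nonlocal H\"older bounds. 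You correctly flag this term as the main obstacle and correctly invoke the frequency-localization lemmas of \S\ref{sec:freq}, but the estimate as written would not close.
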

We adopt the basic setup used in \cite{KiTaVi09}
and introduce the quantities
\begin{align*}
\mathcal{M}(A) & =\sup_T\|P_{>AT^{-\frac12}} \phi(T)\|_{L^2_x(\R^2)} \\
\mathcal{S}(A) & =\sup_T\|P_{>AT^{-\frac12}} \phi(t,x)\|_{L^4_{t,x}([T, 2T]\times \R^2)}  \\
\mathcal{N}(A) & =\sup_T\|P_{>AT^{-\frac12}} \Lambda(\phi)(t,x)
\|_{L^{\frac43}_{t,x}([T,2T]\times \R^2)+L^1_t[T,2T]L^2_x(\R^2)}  \\
\widetilde{\mathcal{N}}(A)& =\sup_T \|P_{>AT^{-\frac12}} \Lambda(\phi)(t,x)\|_{L^{\frac43}_{t,x}([T,2T]\times\R^2)}  
\end{align*}
For our definition of Littlewood-Paley multipliers, see \S \ref{sec:freq}. The nonlinearity $\Lambda(\phi)$
is defined in \eqref{Nphi}.
Whereas $\widetilde{\mathcal{N}}(A)$ is used in \cite{KiTaVi09, GuKo11}
to prove extra regularity for self-similar solutions, 
we use the slightly weakened norm $\mathcal{N}(A)$. 
This is especially helpful when $A_\theta$ has high frequency inputs, as shown in 
Lemma \ref{NLEstimate}. 
 
To prove Theorem \ref{ExtraRss}, we will show that
\begin{equation} \label{ExtraRsscon}
\mathcal{M}(A)=\sup_T\|P_{>AT^{-\frac12}} \phi(T)\|_{L^2_x} <_{\phi ,A}A^{-s}
\end{equation}
for any $s>0$.

\subsubsection{Bounds}

Mass conservation gives 
\[
\ma \lesssim_u 1
\]
and Strichartz estimates imply
\begin{equation}\label{saBound}
\sa \lesssim_\phi \ma +\na
\end{equation}
The spacetime bound proved in \cite[Lemma 3.9]{KiTaVi09} establishes
\[
\sa \lesssim_\phi 1
\]
and this spacetime bound together with 
Lemma \ref{lem:nonlinearity} 
implies
\[
\na
\lesssim \widetilde{\mathcal{N}}(A)
\lesssim\| \Lambda(\phi)\|_{L^{\frac43}{([T,2T] \times \R^2)}}
\lesssim_\phi 1
\]  
The Strichartz estimate together with the above inequalities implies 
\[
\|\phi \|_{L^2_tL^\infty_x ([T, 2T]\times \R^2)} \lesssim_\phi 1
\]
In the following lemma we collect some estimates that we will later employ.
\begin{lem} \label{Holder}
Suppose $\frac{1}{p} =\frac{1}{p_1} +\frac{1}{p_2} +\frac{1}{p_3}$ and $\frac{1}{p_2} +\frac{1}{p_3} >0$.
Then the following
nonlocal H\"{o}lder estimate holds
\begin{equation}\label{Holder1}
\|q_1\int_r^\infty q_2q_3 \frac{d\rho}{\rho}\|_{p}
\lesssim 
\|q_1\|_{p_1}\|q_2\|_{p_2}\|q_3\|_{p_3}
\end{equation}
Additionally,
\begin{equation}\label{Holder2}
\|\frac{A_\theta}{r^2}\phi\|_{p}
\lesssim 
\|\phi\|_{p_1} \|\phi\|_{p_2} \|\phi\|_{p_3}
\end{equation}
for $\frac{1}{p}=\frac{1}{p_1}+\frac{1}{p_2} +\frac{1}{p_3}$ with $1<p_i<\infty$.
 
Also true for equivariant functions $f$ is the Strichartz estimate
\begin{equation}\label{Radial1}
\|P_N e^{it\Delta} f\|_{L^q}
\lesssim 
N^{1-\frac{4}{q}}\|f\|_{L^2_x}, 
\quad q \geq \frac{10}{3}
\end{equation}
from which easily follows the inhomogeneous estimate
\begin{equation}\label{Radial2}
\|P_N u\|_{L^q}
\lesssim 
N^{1-\frac{4}{q}}(\|f\|_{L^2_x} +\|(i\partial_t +\Delta) u\|_{L^\frac{4}{3}+L^1_tL^2_x}), 
\quad q \geq \frac{10}{3}
\end{equation}
\end{lem}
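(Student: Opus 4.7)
The proof splits into three essentially independent pieces. For \eqref{Holder1}, recognize the inner integral as $-[r\partial_r]^{-1}(q_2 q_3)$ via the notation introduced at the start of Section \ref{sec:eqCauchy}. Set $\frac{1}{q} := \frac{1}{p_2} + \frac{1}{p_3}$, which by hypothesis lies in $(0,1]$, so $q \in [1,\infty)$. Apply \eqref{direct1} with this $q$ to obtain $\bigl\| \int_r^\infty q_2 q_3 \, \tfrac{d\rho}{\rho} \bigr\|_{L^q} \lesssim \|q_2 q_3\|_{L^q}$, and ordinary H\"older gives $\|q_2 q_3\|_{L^q} \le \|q_2\|_{L^{p_2}}\|q_3\|_{L^{p_3}}$. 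A final H\"older pairing $q_1 \in L^{p_1}$ with the integrand in $L^q$ (noting $\frac{1}{p} = \frac{1}{p_1} + \frac{1}{q}$) yields \eqref{Holder1}.

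For \eqref{Holder2}, use the identity \eqref{Atheta} to write $\frac{A_\theta}{r^2} = -\frac{1}{2}\, r^{-2}[r^{-1}\bar\partial_r]^{-1}|\phi|^2$. By \eqref{direct2} with $n=1$, one has $\bigl\|\frac{A_\theta}{r^2}\bigr\|_{L^q} \lesssim \bigl\| |\phi|^2 \bigr\|_{L^q}$ for $1 < q \le \infty$. Setting $\frac{1}{q} = \frac{1}{p_2} + \frac{1}{p_3}$ and splitting $|\phi|^2 = \phi \cdot \phi$ via H\"older into $L^{p_2}$ and $L^{p_3}$ factors, then combining with one further H\"older against the outer $\phi \in L^{p_1}$ (so that $\frac{1}{p} = \frac{1}{q} + \frac{1}{p_1}$) gives \eqref{Holder2}. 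In edge cases where this particular split violates the range of \eqref{direct2}, one reassigns which two $\phi$-copies feed the $A_\theta$ factor; since $1 < p_i < \infty$, at least one legal assignment exists.

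For the Strichartz bound \eqref{Radial1}, the exponent $1 - 4/q$ is forced by the parabolic scaling of the frequency-localized propagator applied to an $L^2$ datum. The improved $L^q_{t,x}$ range $q \ge 10/3$, which is strictly beyond the standard admissible pair $(4,4)$, is where the $m$-equivariant structure is genuinely used: expanding $P_N e^{it\Delta}f$ in polar coordinates reduces it, under the ansatz $f = e^{im\theta}u(r)$, to an oscillatory integral of the form $e^{im\theta} \int_0^\infty J_m(r\rho) e^{-it\rho^2} \hat u(\rho)\, \rho\, d\rho$, and a $TT^*$/stationary-phase analysis exploiting the decay of $J_m$ (in the spirit of Shao's radial argument and of the endpoint Lemma \ref{lem:endStrichartz} together with the remark following it) yields the estimate after interpolation with the standard pair. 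The inhomogeneous bound \eqref{Radial2} then follows from \eqref{Radial1}, Duhamel's formula, and the identification of the dual of the intersected Strichartz space as the sum space $L^{4/3}_{t,x} + L^1_t L^2_x$ as described preceding Lemma \ref{lem:endStrichartz}.

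The most delicate point is the equivariant Strichartz estimate \eqref{Radial1}, which is the only step that genuinely exploits the equivariant structure; by contrast \eqref{Holder1} and \eqref{Holder2} are essentially mechanical H\"older manipulations powered by the pointwise representations of $A_\theta$ and the one-dimensional Hardy inequalities \eqref{direct1}--\eqref{direct2} already proved in Section \ref{sec:eqCauchy}.
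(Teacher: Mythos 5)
Your proposal is correct, and for the two H\"older-type estimates it is in fact more self-contained than the paper, which offers no proof at all here: the paper cites \cite[\S 3]{GuKo11} for \eqref{Holder1}--\eqref{Holder2} and Shao \cite{Sh09} together with Guo--Wang \cite{GuWa10} for \eqref{Radial1}, with Remark \ref{rem:radial} explaining why the radial arguments extend to $m$-equivariance via the behavior of $J_m$. Your derivation of \eqref{Holder1} from the Hardy inequality \eqref{direct1}, and of \eqref{Holder2} from the representation \eqref{Atheta} combined with \eqref{direct2}, is precisely the elementary argument the citation stands in for, and your treatment of \eqref{Radial1}--\eqref{Radial2} matches the paper's (both defer the oscillatory-integral work to the cited references and obtain the inhomogeneous bound by Duhamel and duality). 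Two small bookkeeping caveats. First, the hypothesis of \eqref{Holder1} only gives $\frac{1}{p_2}+\frac{1}{p_3}>0$, not $\leq 1$, so your assertion that $q\in[1,\infty)$ ``by hypothesis'' silently adds the condition $\frac{1}{p_2}+\frac{1}{p_3}\leq 1$ needed to invoke \eqref{direct1}. Second, for \eqref{Holder2} the ``reassignment'' escape is not always available: if, say, $p_1=p_2=p_3=2$, every pair of inputs gives $q=1$, which lies outside the range of \eqref{direct2}. Neither issue affects any application in the paper --- in Lemma \ref{NLEstimate} and Proposition \ref{prop:ste} the exponents feeding $A_\theta$ always satisfy $\frac{1}{p_2}+\frac{1}{p_3}<1$ --- but at the full stated generality you should either impose the corresponding restriction on the exponents or supply a separate argument for the borderline cases.
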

The nonlocal H\"older estimate follows from elementary inequalities, see \cite[\S 3]{GuKo11}.
Shao \cite{Sh09} proved \eqref{Radial1} for the range $q > \frac{10}{3}$, 
and the endpoint $q = \frac{10}{3}$ was established
by Guo and Wang in \cite{GuWa10}.

\begin{rem} \label{rem:radial}
There is enough slack in our argument for nonendpoint estimates to suffice.
However, when the endpoint estimate is used, the exponents are particularly simple,
and so we use this estimate for convenience. 
Note that both \cite{Sh09} and \cite{GuWa10} prove results for radial functions. 
There they use the fact that the Fourier transform of a radial function may be expressed in terms of a
Hankel transform with kernel $J_k$, a Bessel function of the first kind. 
When the underlying space is two-dimensional, $k = 0$.
In the $m$-equivariant 2-d setting, the Bessel function required is $J_m$, which enjoys the same
asymptotics at infinity as does $J_0$, but is better behaved near the origin. 
These properties are sufficient for extending the proofs of \cite{Sh09, GuWa10} to this setting.
\end{rem}

We now come to the first main estimate.
\begin{lem}\label{NLEstimate}
Given $A$ large enough, we have 
\[
\na \lesssim_\phi
\mathcal{S}(\frac{A}{10})\mathcal{M}(\sqrt{A})  + 
A^{-\frac{1}{10}}[\mathcal{M}(\frac{A}{10})+\mathcal{N}(\frac{A}{10})]
\]
 \end{lem}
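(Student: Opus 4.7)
\medskip
\noindent\emph{Proof proposal.}
The plan is to expand $\Lambda(\phi)$ into its five summands and, for each summand, to decompose every $\phi$-input into Littlewood--Paley pieces relative to two thresholds $\sqrt{A}\,T^{-1/2}$ and $A T^{-1/2}/10$ (ordered with $\sqrt A \le A/10$ for $A\ge 100$). The threshold $\sqrt A T^{-1/2}$ is where $L^\infty_t L^2_x$ placement of a high-frequency factor yields the factor $\mathcal{M}(\sqrt A)$; the threshold $AT^{-1/2}/10$ is where $L^4_{t,x}$ placement yields $\mathcal{S}(A/10)$ and where the equivariant Strichartz estimate \eqref{Radial2} with $q=10/3$ supplies a Bernstein gain $N^{-1/5}$ that, after dyadic summation, telescopes to the advertised $A^{-1/10}$.

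I would start with the cubic nonlinearity $-g|\phi|^2\phi$. Writing each $\phi$-input as $\phi = \phi_L + \phi_H$ at threshold $\sqrt A T^{-1/2}$, the all-low contribution $\phi_L \bar\phi_L \phi_L$ has Fourier support in $\{|\xi|\le 3\sqrt A T^{-1/2}\}$, which for $A\ge 9$ is annihilated by $P_{>AT^{-1/2}}$. Every surviving term carries at least one $\phi_H$, and, since the output frequency exceeds $AT^{-1/2}$, at least one input must in fact have frequency $\ge AT^{-1/2}/10$. If these two constraints are witnessed by distinct factors, then H\"older with one factor in $L^\infty_t L^2_x$ (yielding $\mathcal M(\sqrt A)$), one in $L^4_{t,x}$ (yielding $\mathcal S(A/10)$), and the third in the endpoint $L^2_t L^\infty_x$ from Lemma \ref{lem:endStrichartz} delivers the first summand $\mathcal S(A/10)\mathcal M(\sqrt A)$. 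If both constraints are realized by a single factor, that factor sits at frequency $\ge AT^{-1/2}/10$, and I place it in $L^{10/3}_{t,x}$; estimate \eqref{Radial2} supplies $\|P_N \phi\|_{L^{10/3}_{t,x}([T,2T])} \lesssim N^{-1/5}(\mathcal M(A/10) + \mathcal N(A/10))$ for each dyadic $N\ge AT^{-1/2}/10$, and dyadic summation against complementary Strichartz-type norms on the remaining two $\phi$-inputs produces the second summand $A^{-1/10}[\mathcal M(A/10) + \mathcal N(A/10)]$.

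For the covariant and potential terms $2m r^{-2}A_\theta \phi$, $A_0^{(1)}\phi$, $A_0^{(2)}\phi$, and $r^{-2}A_\theta^2 \phi$, the Fourier identities of Section \ref{sec:freq} (in particular $\widehat{A_\theta} = \rho^{-1}\partial_\rho \hat f$, $\widehat{r^{-2}A_\theta} = -[\rho\partial_\rho]^{-1}\hat f$ with $f=-\tfrac12|\phi|^2$, and the analogous formulas for $A_0^{(1)}$ and $A_0^{(2)}$) show that each of these nonlocal objects inherits its Fourier support from the underlying product of $\phi$'s. Decomposing each $\phi$ across the same two-threshold partition then kills the all-low contributions (whose supports lie in $\{|\xi|\le AT^{-1/2}\}$), while each surviving piece carries a distinguished input at frequency exceeding one of the two thresholds. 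Applying the $L^\infty$, $L^2$, and $L^p$ bounds of Lemmas \ref{lem:Atheta} and \ref{lem:A0} (which are linear in $|\phi|^2$ and therefore respect the Littlewood--Paley split) and distributing the remaining factors in norms as in the cubic case then yields contributions of the same two forms.

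The principal obstacle is the careful bookkeeping required to track Littlewood--Paley localizations through the nonlocal operators defining $A_\theta$, $r^{-2}A_\theta$, $A_0^{(1)}$, and $A_0^{(2)}$, as well as through the quadratic combination $r^{-2}A_\theta^2$; without the Fourier identities of Section \ref{sec:freq}, the all-low cancellation would not be manifest. A secondary subtlety is verifying that the equivariant $L^{10/3}_{t,x}$ gain $N^{-1/5}$, once dyadically summed over $N\ge AT^{-1/2}/10$ and paired with complementary norms on the low-frequency inputs, produces exactly the $A^{-1/10}$ power in the claim; here the self-similar scaling supplies the balancing $T$-powers.
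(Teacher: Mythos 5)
Your overall architecture matches the paper's proof: the same three-band frequency decomposition at thresholds $\sqrt{A}\,T^{-1/2}$ and $\tfrac{1}{10}AT^{-1/2}$, the same two-case split (a second input above the lower threshold versus a single high input with all others low), the same use of the frequency-localization lemmas of \S\ref{sec:freq} to kill the all-low contribution, and the same pairing of the equivariant $L^{10/3}$ estimate \eqref{Radial2} on the high piece with an $L^5$ Bernstein gain on a low piece to produce $A^{-1/10}$ (your ``secondary subtlety'' resolves exactly as you suspect: $N^{-1/5}M^{1/5}$ with $N=\tfrac{1}{10}AT^{-1/2}$, $M=\sqrt{A}\,T^{-1/2}$ gives $\sim A^{-1/10}$ with the $T$-powers cancelling).

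There is, however, one genuine gap: your claim that the nonlocal quintic terms can be closed by ``distributing the remaining factors in norms as in the cubic case,'' i.e.\ entirely in $L^{4/3}_{t,x}$. This fails for $\Lambda_{5,2}=A_0^{(1)}\phi$ when the distinguished high- or intermediate-frequency input sits \emph{inside} the $A_\theta$ appearing in $A_0^{(1)}=-\int_r^\infty \frac{A_\theta}{s}\lvert\phi\rvert^2\,ds$. In that configuration you cannot simply put $A_\theta$ in $L^\infty_x$ (that discards the frequency localization and yields no smallness factor); the usable bound that retains a frequency-localized input is the $L^1_tL^\infty_x$-type estimate for $A_0^{(1)}$ (as in Lemma \ref{lem:A0} and \eqref{Holder2}), and pairing it with the outer $\phi$ in $L^\infty_tL^2_x$ lands the term in $L^1_tL^2_x$ rather than $L^{4/3}_{t,x}$. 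This is precisely why the paper defines $\mathcal{N}(A)$ with the sum space $L^{4/3}_{t,x}+L^1_tL^2_x$ instead of the pure $L^{4/3}_{t,x}$ norm $\widetilde{\mathcal{N}}(A)$ used in the references it follows, and it is the one point where Lemma \ref{NLEstimate} deviates from a routine adaptation of the cubic-term argument. Your proposal never invokes the $L^1_tL^2_x$ component of the norm, so as written this case is not covered; you would also need to check (as the paper does via \eqref{Radial2} and its compatibility with the sum space) that the inhomogeneous Strichartz inputs in your Case~2 accept forcing terms measured in $L^{4/3}_{t,x}+L^1_tL^2_x$, which is why $\mathcal{N}(\tfrac{A}{10})$ rather than $\widetilde{\mathcal{N}}(\tfrac{A}{10})$ appears on the right-hand side.
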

 \begin{proof} 
 We proceed as in \cite{KiTaVi09}. It suffices to prove 
 \[
 \|P_{>AT^{-\frac12}} \mathcal{N}(\phi)(t,x)\|_{L^{\frac43}_{t,x} +L^1_tL^2_x[T,2T]} 
 \lesssim
 \mathcal{S}(\frac{A}{10})\mathcal{M}(\sqrt{A})  + 
A^{-\frac{1}{10}}[\mathcal{M}(\frac{A}{10})+\mathcal{N}(\frac{A}{10})]
 \] 
uniformly in $T$.
To do this, we decompose $\phi$ into high, intermediate and low frequency pieces, i.e.,
\[
\phi = \phi_{hi} +\phi_{med}+ \phi_{low}
\]
where 
\[
\phi_{hi}=\phi_{> \frac{1}{10}AT^{-\frac12}}, \quad
\phi_{med}= \phi_{\sqrt{A}T^{-\frac12}\leq \cdot \leq \frac{1}{10}AT^{-\frac12}}, \quad
\phi_{low}=\phi_{\leq \sqrt{A}T^{-\frac12}}
\]
Because of the frequency localization lemmas of \S \ref{sec:freq},
we see that having nontrivial $P_{>AT^{-\frac12}} \mathcal{N}(\phi)(t,x)$ 
implies that the nonlinearity must have at least one high frequency input  $\phi_{hi}$.
  
As in \cite{KiTaVi09}, we split into cases according to whether we have one intermediate input or all low inputs.
 
It is convenient at this stage to split up the nonlinearity into ``cubic" and ``quintic" terms, as follows
\begin{equation} \label{Nsplit}
\Lambda_3 := 2m \frac{A_\theta}{r^2} \phi + A_0^{(2)} \phi - g |\phi|^2 \phi, \quad
\Lambda_{5,1} := \frac{A_\theta^2}{r^2} \phi, \quad
\Lambda_{5,2} := A_0^{(1)} \phi,
\end{equation}
so that $\Lambda(\phi) = \Lambda_3 + \Lambda_{5,1} + \Lambda_{5,2}$.

\textbf{Case 1: }
If we have at least one intermediate input $ \phi_{med}$ in the nonlinearity, 
then we use the H\"{o}lder estimate \eqref{Holder1}. 
In particular, we use $L^4$ on $\phi_{hi}$ and $L^{\infty}_tL^2_x$ on $\phi_{med}$, 
and so obtain the bound
\[ 
\| \Lambda_3(\phi_{hi}, \phi_{med}, \phi)\|_{L^{\frac43}[T,2T]}
\lesssim 
\mathcal{S}(\frac{A}{10})\mathcal{M}(\sqrt{A})
\]
For the quintic term $\Lambda_{5,1}$, use $L^\infty$ on an $A_\theta$ that does not involve $\phi_{hi}$ 
and then apply H\"older to $\frac{A_\theta}{r^2}\phi$ in the same way that we do for
the cubic terms:
\[
\|\Lambda_{5,1}\|_{L^{\frac43}[T,2T]}\lesssim_\phi \mathcal{S}(\frac{A}{10})\mathcal{M}(\sqrt{A})  
\]
We can control the quintic term $\Lambda_{5,2}$
in $L^{\frac43}$ using $L^\infty$ on $A_\theta$ and H\"older on the other terms
provided that $A_\theta$ does not have a high frequency input. 
If $A_\theta$ does have a high frequency input, then we estimate $\Lambda_{5,2}$ in $L^1_tL^2_x$:
\begin{align*}
\|\Lambda_{5,2}\|_{L^1_tL^2_x}& 
\lesssim 
\|\int_r^\infty \frac{  A_\theta}{s^2} |\phi|^2 sds\|_{L^1_tL^\infty_x}\|\phi\|_{L^\infty_tL^2_x[T,2T]}
\\ 
&\lesssim \|\frac{A_\theta}{s^2} |\phi|^2\|_{L^1_{t,x}[T,2T]}\|\phi\|_{L^\infty_tL^2_x}
\\
& \lesssim \mathcal{S}(\frac{A}{10})\mathcal{M}(\sqrt{A})
\end{align*}
Here we used the H\"{o}lder estimate \eqref{Holder2}, putting the high frequency terms in $L^4$, 
the medium frequency ones in $L^\infty L^2$, and the rest in $L^4$ and $L^2L^\infty.$ 
 
Altogether, we conclude
$\|\Lambda_{5,2}\|_{L^{\frac43}+L^1L^2}\lesssim_{\phi }\mathcal{S}(\frac{A}{10})\mathcal{M}(\sqrt{A})$.

\textbf{Case 2:} 
For the case where one input is at high frequency and the rest are at low frequency,
we adopt the idea of using the Strichartz estimates \eqref{Radial1}, \eqref{Radial2}, as found in
\cite[\S 3.3]{GuKo11}.
 
For $\Lambda_3$, we use, as in Case 1, $L^{\frac{10}{3}}$ on $\phi_{hi}$ and $L^5$ on one of $\phi_{low}$:
\[ 
\|\Lambda_3(\phi)\|_{L^\frac{4}{3}{[T,2T]}}
\lesssim 
\|\phi_{hi}\|_{L^{\frac{10}{3}}{[T,2T] }  }\|\phi_{low}\|_{L^{5}{[T,2T]}}\|\phi\|_{L^{4}{[T,2T]}}
\]
Using Bernstein and the inhomogeneous Strichartz estimate (\ref{Radial2}), we get 
\[
\begin{split}
\|P_{<M}\phi\|_{L^{5}_{[T,2T]\times \mathbb{R}^2}}
&\lesssim 
M^{\frac{1}{5}}\left( \|P_{<M}\phi(T)\|_{L^2} + \|P_{<M}\mathcal{N}(\phi)\|_{L^1L^2+L^{\frac43}[T,2T]} \right)
\\
\|P_{> N}\phi\|_{L^{\frac{10}{3}}_{[T,2T]\times \mathbb{R}^2}}
&\lesssim 
N^{-\frac{1}{5}}\left( \|P_{> N}\phi(T)\|_{L^2} +\|P_{>N}\mathcal{N}(\phi)\|_{L^1L^2+L^{\frac43}[T,2T]} \right)
\end{split}
\] 
Taking $N=\frac{1}{10}AT^{-\frac12}$ and  $M=\sqrt{A}T^{-\frac12}$, 
we obtain
\[ 
\|\Lambda_3(\phi)\|_{L^\frac{4}{3}_{[T,2T]\times \mathbb{R}^2}}
\lesssim 
(A)^{-\frac{ 1}{10}}[\mathcal{M}(\frac{1}{10}A)+\mathcal{N}(\frac{A}{10})] 
\]
The quintic pieces of $\Lambda_{5,1}$ and $\Lambda_{5,2}$ with $A_\theta$ not involving $\phi_{hi}$
we handle as in Case 1. In particular, we use $L^\infty$ on $A_\theta$,
then apply H\"older to obtain $\|\phi_{hi}\|_{L^{\frac{10}{3}}  }$ and $\|\phi_{low}\|_{L^{5}}$,
and then apply H\"older once more to get the $A^{-\frac{1}{10}}$ decay factor. 
 
The quintic term $\Lambda_{5,2}$ with $A_\theta$ involving $\phi_{hi}$ we bound in $L^1L^2$ 
as in Case 1:
\begin{align*}
\|\Lambda_{5,2}\|_{L^1_tL^2_x}& 
\lesssim \|\phi_{hi}\|_{L^{\frac{10}{3}}}\|\phi\|_{L^5}\|\phi\|^2_{L^{4}} \|\phi\|_{L^\infty L^2 }  
\lesssim A^{-\frac{1}{10}}[\mathcal{M}(\frac{1}{10}A)+\mathcal{N}(\frac{A}{10})]  
\end{align*}
\end{proof}

\begin{lem} 
We have
\[
\lim_{A\rightarrow \infty} \ma, \sa,\na =0
\]
\end{lem}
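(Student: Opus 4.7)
The plan is to extract the three limits in order: $\mathcal{M}(A) \to 0$, then $\mathcal{N}(A) \to 0$, then $\mathcal{S}(A) \to 0$, using almost periodicity as the only \emph{qualitative} input and Lemma \ref{NLEstimate} plus \eqref{saBound} to propagate decay.

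First I would extract $\mathcal{M}(A) \to 0$ directly from almost periodicity modulo scaling. In the self-similar scenario, $I = (0,\infty)$ and $N(t) = t^{-1/2}$, so Definition \ref{def:ap} supplies, for each $\eta > 0$, a constant $C(\eta)$ with
\[
\int_{|\xi| \geq C(\eta)\, t^{-1/2}} |\hat\phi(t,\xi)|^2\, d\xi \leq \eta \quad \text{for all } t > 0.
\]
Taking $A \geq C(\eta)$ and taking the sup over $T$ gives $\mathcal{M}(A) \leq \sqrt{\eta}$, so $\mathcal{M}(A) \to 0$.

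Second, I would feed this into Lemma \ref{NLEstimate} to control $\mathcal{N}(A)$. The uniform bounds $\mathcal{M}(A) \lesssim_\phi 1$ (from mass conservation), $\mathcal{S}(A) \lesssim_\phi 1$ (from the spacetime bound of \cite{KiTaVi09}), and $\mathcal{N}(A) \leq \widetilde{\mathcal{N}}(A) \lesssim_\phi 1$, all recorded just before Lemma \ref{Holder}, let me set $M_0, S_0, N_0$ to be their respective finite suprema. Lemma \ref{NLEstimate} then yields
\[
\mathcal{N}(A) \lesssim_\phi S_0 \, \mathcal{M}(\sqrt{A}) \;+\; A^{-1/10}\bigl(M_0 + N_0\bigr),
\]
and both terms on the right tend to $0$ as $A \to \infty$ (the first by Step 1 applied at scale $\sqrt{A}$, the second trivially). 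Hence $\mathcal{N}(A) \to 0$. Finally, the Strichartz inequality \eqref{saBound} gives $\mathcal{S}(A) \lesssim_\phi \mathcal{M}(A) + \mathcal{N}(A) \to 0$, completing the proof.

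The only conceptual point requiring care — though not really an obstacle, since the hard work was done in Lemma \ref{NLEstimate} — is that without \emph{a priori} finiteness of $\mathcal{S}$ and $\mathcal{N}$ one would face an $\infty \cdot 0$ indeterminacy when passing to the limit in the cubic term $\mathcal{S}(A/10)\mathcal{M}(\sqrt{A})$. The already-established uniform boundedness of all three quantities, together with the explicit $A^{-1/10}$ gain on the quintic piece, renders the recursive estimate harmless and forces contraction to zero.
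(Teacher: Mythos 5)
Your argument is correct and is exactly the one the paper intends: the paper's proof is a one-line citation to \cite{KiTaVi09} naming almost periodicity, self-similarity, Lemma \ref{NLEstimate}, and \eqref{saBound} as the ingredients, and you have simply written out that standard chain ($\mathcal{M}(A)\to 0$ from the frequency-localization half of Definition \ref{def:ap} with $N(t)=t^{-1/2}$, then $\mathcal{N}(A)\to 0$ by inserting the a priori uniform bounds into the recursive estimate, then $\mathcal{S}(A)\to 0$ via Strichartz). Your closing remark about why the prior uniform boundedness of $\mathcal{S}$ and $\mathcal{N}$ is needed to avoid an indeterminate limit is a correct and worthwhile observation.
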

\begin{proof} 
The proof follows that in \cite{KiTaVi09}, 
using the definition of almost periodicity, self-similarity, Lemma \ref{NLEstimate}, and (\ref{saBound}).
\end{proof}

Given the nonlinear estimate established in Lemma \ref{NLEstimate},
the following $\varepsilon$-regularity result follows using exactly the same arguments
employed in \cite[Prop.~5.5]{KiTaVi09}.
\begin{lem}\label{Decay}
For all $A > 0$,
\[
S(A)\lesssim \eta S(\frac{A}{20})  +A^{-\frac{1}{40}}
\]
and
\[
\ma+\sa+\na\lesssim A^{-\frac{1}{40}}
\]
\end{lem}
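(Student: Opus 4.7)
The plan is to follow the $\varepsilon$-regularity bootstrap of \cite[Prop.~5.5]{KiTaVi09}, combining Lemma \ref{NLEstimate}, the Strichartz estimate \eqref{saBound}, and a backward Duhamel identity for $\ma$ to close a system of inequalities, and then iterating to extract polynomial decay.

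\textbf{Backward Duhamel bound for $\ma$.} For an almost periodic, self-similar solution with lifespan $(0,\infty)$, the convergence $\|P_{>AT^{-1/2}}\phi(t_0)\|_{L^2} \to 0$ as $t_0 \to \infty$ follows from the definition of almost periodicity together with $N(t_0) = t_0^{-1/2} \to 0$. Consequently the backward Duhamel formula
\[
P_{>AT^{-1/2}}\phi(T) = -i\int_T^\infty P_{>AT^{-1/2}} e^{i(T-s)\Delta}\Lambda(\phi)(s)\,ds
\]
holds in $L^2$. Splitting the time integral into dyadic blocks $[2^k T, 2^{k+1}T]$, applying Strichartz on each with the dual $L^{4/3}_{t,x} + L^1_t L^2_x$ norm, and using the self-similar identity $AT^{-1/2} = (A 2^{k/2})(2^k T)^{-1/2}$ to relabel the cutoff yields
\[
\ma(A) \lesssim \sum_{k \geq 0} \na(A 2^{k/2}).
\]

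\textbf{Recursion and iteration.} Fix a small $\eta_0 > 0$ and, using the preceding lemma, pick $A_0$ so that $\ma(\sqrt A) \leq \eta_0$ for $A \geq A_0$. Lemma \ref{NLEstimate} then reads
\[
\na(A) \lesssim \eta_0 \sa(A/10) + A^{-1/10}\bigl[\ma(A/10) + \na(A/10)\bigr].
\]
Substituting the previous paragraph's bound to control $\ma(A/10)$ by a convergent dyadic tail of $\na$ (convergence secured by the already-known qualitative decay), combining with Strichartz $\sa(A) \lesssim \ma(A) + \na(A)$, and choosing $\eta_0$ small enough to absorb the leading term on the right, one arrives at
\[
\sa(A) \lesssim \eta_0 \sa(A/20) + A^{-1/40}, \quad A \geq A_0,
\]
which is the first claim (trivially true for $A \leq A_0$ since $\sa = O_\phi(1)$). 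Iterating $k$ times gives $\sa(A) \lesssim \eta_0^k \sa(A/20^k) + C_{\eta_0} A^{-1/40}$; boundedness of $\sa$ and $k$ large yields $\sa(A) \lesssim A^{-1/40}$. Feeding this back into Lemma \ref{NLEstimate} produces $\na(A) \lesssim A^{-1/40}$, and then the Duhamel bound delivers $\ma(A) \lesssim A^{-1/40}$.

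The main obstacle is the first step: one must justify that the backward Duhamel identity holds in $L^2$ after frequency projection, which requires the strong convergence $\|P_{>AT^{-1/2}}\phi(t_0)\|_{L^2} \to 0$, and one must handle the hybrid $L^{4/3}_{t,x}+L^1_t L^2_x$ norm defining $\na$ when summing Strichartz over dyadic blocks. The $L^1_t L^2_x$ piece is the nonstandard ingredient relative to \cite{KiTaVi09}, coming from the high-frequency input of $A_\theta$ in the quintic term $A_0^{(1)}\phi$; fortunately, by Lemma \ref{lem:nonlinearity} the nonlinearity is controlled in this hybrid norm, so the rescaling goes through and the rest is algebraic manipulation of the nonlinear and Strichartz estimates.
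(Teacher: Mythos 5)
Your outline matches the route the paper intends: the paper's entire proof of this lemma is the sentence ``follows using exactly the same arguments employed in \cite[Prop.~5.5]{KiTaVi09}'', and your reconstruction assembles the right ingredients --- the reduced (forward-in-time) Duhamel formula justified by almost periodicity and $N(t)=t^{-1/2}\to 0$, the exact rescaling $AT^{-1/2}=(A2^{k/2})(2^kT)^{-1/2}$ converting dyadic future blocks into $\mathcal{N}$ at higher frequency parameters, the nonlinear estimate of Lemma \ref{NLEstimate}, the Strichartz bound \eqref{saBound}, and the standard iteration $\sa(A)\lesssim \eta^k\sa(A/20^k)+C A^{-1/40}$ with $k\sim\log_{20}A$ and $\eta$ chosen below $20^{-1/40}$. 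Your observation that the hybrid dual norm $L^{4/3}_{t,x}+L^1_tL^2_x$ is the only new ingredient relative to \cite{KiTaVi09}, and that both components are duals of admissible pairs so the blockwise Strichartz estimate goes through, is also correct.

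There is, however, one step that does not close as written: the convergence of $\sum_{k\ge 0}\na(A2^{k/2})$. You attribute it to ``the already-known qualitative decay,'' but $\na(B)\to 0$ as $B\to\infty$ does not make the series summable, and at this stage of the argument the only available bound on a general summand is of the form $\na(2^{k/2}A)\lesssim \sa(2^{k/2}A/10)\,\ma(2^{k/4}\sqrt{A})+(2^{k/2}A)^{-1/10}$. The second piece sums to $O(A^{-1/10})$, but the first is only known to be $O(\eta)$ (or $O(\eta^2)$ after feeding Strichartz back in) uniformly in $k$, so the series diverges with the estimates you have quoted. This is precisely the delicate point of \cite[Prop.~5.5]{KiTaVi09}: the decay in $k$ has to be \emph{extracted}, not assumed --- one must run the recursion so that the product structure $\sa(\cdot)\ma(\sqrt{\cdot})$ together with the explicit $(2^{k/2}A)^{-1/10}$ gains produces a summable tail before the Duhamel sum is invoked in full. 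Your final feedback step ($\na(B)\lesssim B^{-1/40}$ implies $\ma(A)\lesssim\sum_k(2^{k/2}A)^{-1/40}\lesssim A^{-1/40}$) is fine because by then polynomial decay is available; it is only the first pass, where you use the infinite Duhamel sum to manufacture the recursive inequality for $\sa$, that needs the missing argument. You should either reproduce the bookkeeping of \cite[Prop.~5.5]{KiTaVi09} at this point or restructure the first pass so that the Duhamel sum is only invoked after some polynomial decay has been secured.
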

Finally, adapting the induction argument, we conclude higher regularity.
\begin{thm} 
For all $A > 0$ and $s > 0$,
\[
\ma \lesssim A^{-s}
\]
\end{thm}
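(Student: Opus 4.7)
The plan is to prove the theorem by induction on the decay exponent, taking the base case $s_0 = 1/40$ from Lemma \ref{Decay}. Suppose inductively that $\mathcal{M}(A) + \mathcal{S}(A) + \mathcal{N}(A) \lesssim_\phi A^{-s_0}$ holds for all sufficiently large $A$, and I will argue that the same bound holds with $s_0$ replaced by $s_1 := s_0 + \min(s_0/2, 1/10)$. Iterating (the gain per step is bounded below by $1/80$ for $s_0 \leq 1/5$, and by $1/10$ thereafter) one reaches any prescribed exponent $s > 0$ in finitely many steps.

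Substituting the inductive hypothesis into Lemma \ref{NLEstimate} gives
\[
\mathcal{N}(A) \lesssim_\phi \mathcal{S}(A/10)\,\mathcal{M}(\sqrt{A}) + A^{-1/10}\bigl[\mathcal{M}(A/10) + \mathcal{N}(A/10)\bigr] \lesssim_\phi A^{-3s_0/2} + A^{-s_0 - 1/10},
\]
so $\mathcal{N}(A) \lesssim_\phi A^{-s_1}$. The Strichartz bound \eqref{saBound} then upgrades $\mathcal{S}$ to the same exponent as soon as $\mathcal{M}$ is upgraded, so the remaining task is to propagate the $\mathcal{N}$-improvement back to $\mathcal{M}$.

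For this I would use the Duhamel representation combined with self-similarity and almost periodicity. For fixed $T > 0$ and $0 < T_0 < T$,
\[
\phi(T) = e^{i(T - T_0)\Delta}\phi(T_0) - i\int_{T_0}^{T} e^{i(T - s)\Delta}\Lambda(\phi)(s)\,ds.
\]
Almost periodicity with $N(T_0) = T_0^{-1/2} \to \infty$ forces $\phi(T_0) \rightharpoonup 0$ weakly in $L^2$ as $T_0 \to 0^+$ (test against any fixed $g \in L^2$: low frequencies are controlled by the almost-periodicity bound, high frequencies by the decay of $\hat{g}$). Passing to the weak limit and pairing against test functions with Fourier support in $\{|\xi| > AT^{-1/2}\}$, then applying Strichartz duality on the dyadic decomposition $(0, T] = \bigcup_{k \geq 0}[2^{-k-1}T, 2^{-k}T]$, the self-similar scaling $N(2^{-k}T) = 2^{k/2}T^{-1/2}$ converts the contribution of the $k$-th slab into an $\mathcal{N}(2^{-k/2}A)$ term (at the rescaled time). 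Summing over $k$ yields $\mathcal{M}(A) \lesssim_\phi \mathcal{N}(A/C)$ for some absolute constant $C \geq 1$, which, combined with the improved $\mathcal{N}$-bound, closes the induction.

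The main technical obstacle lies in the Duhamel/weak-limit step: one must verify that the dyadic sum in $k$ converges uniformly in $A$, which relies essentially on the polynomial decay furnished by the inductive hypothesis (not merely the $o(1)$ bound of Lemma \ref{Decay}), and on the precise compatibility of the frequency cutoff $AT^{-1/2}$ with the self-similar rescaling on each subinterval. The nonlinear step (feeding the hypothesis into Lemma \ref{NLEstimate}) is routine once the Duhamel step is established, and the overall induction then delivers the arbitrarily fast polynomial decay claimed.
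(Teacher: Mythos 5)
Your overall skeleton — bootstrap the exponent by feeding the inductive hypothesis into Lemma \ref{NLEstimate} to upgrade $\mathcal{N}$, then convert the improved $\mathcal{N}$-decay back into $\mathcal{M}$-decay via Duhamel, then recover $\mathcal{S}$ from \eqref{saBound} — is exactly the induction of \cite{KiTaVi09} that the paper invokes, and your nonlinear step is correct. But the Duhamel step is run in the wrong time direction, and as written it fails. Decomposing $(0,T]$ into slabs $[2^{-k-1}T,2^{-k}T]$ produces contributions bounded by $\mathcal{N}(2^{-(k+1)/2}A)$ with \emph{decreasing} arguments: the natural frequency scale $N(t)=t^{-1/2}$ blows up as $t\to 0^+$, so on the $k$-th slab the fixed cutoff $AT^{-1/2}$ sits at relative frequency $2^{-k/2}A$, which drops below $1$ once $k\gtrsim 2\log_2 A$. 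From that point on $\mathcal{N}(2^{-k/2}A)$ is merely $O_\phi(1)$ and the sum $\sum_k \mathcal{N}(2^{-k/2}A)$ has no reason to converge; under the inductive hypothesis $\mathcal{N}(B)\lesssim B^{-s_0}$ it is formally $A^{-s_0}\sum_k 2^{ks_0/2}=\infty$. Polynomial decay of $\mathcal{N}$ makes this sum worse, not better, so the claimed conclusion $\mathcal{M}(A)\lesssim \mathcal{N}(A/C)$ cannot be extracted this way. The underlying obstruction is real, not technical: near $t=0$ the solution lives at frequencies $\sim t^{-1/2}\gg AT^{-1/2}$, so $P_{>AT^{-1/2}}\Lambda(\phi)(t)$ carries essentially all of the nonlinearity there, with no smallness (indeed $\|\phi\|_{L^4((0,\delta])}$ is infinite for a genuine self-similar blowup).

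The correct step goes toward $t=+\infty$, where $N(t)\to 0$ and the cutoff becomes relatively \emph{higher}. Writing Duhamel forward over $[2^kT,2^{k+1}T]$ and iterating,
\begin{equation*}
\|P_{>AT^{-1/2}}\phi(T)\|_{L^2}\;\leq\; \mathcal{M}(2^{n/2}A)\;+\;\sum_{k=0}^{n-1}\mathcal{N}(2^{k/2}A),
\end{equation*}
since on $[2^kT,2^{k+1}T]$ the cutoff $AT^{-1/2}=(2^{k/2}A)(2^kT)^{-1/2}$. No weak limit is needed: the free-evolution tail $\mathcal{M}(2^{n/2}A)\to 0$ as $n\to\infty$ by Lemma \ref{Decay} (or the qualitative decay lemma), and under the hypothesis $\mathcal{N}(B)\lesssim B^{-s_1}$ the remaining sum is a convergent geometric series $\lesssim_{s_1}A^{-s_1}$. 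With this replacement your induction closes exactly as you describe.
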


\subsection{The global critical case}

The Fourier transform of an $m$-equivariant function $f(r, \theta) = e^{im\theta}u(r) $ 
is given in terms of a Hankel transform of its radial part $u$. We use polar coordinates
$(\rho, \alpha)$ on the Fourier side, obtaining
\[
\mathcal{F}(f)(\rho, \alpha) 
= 
2\pi(-i)^m e^{im\alpha} \int_0^\infty u(r)J_m(r \rho)rdr
\]
The Fourier transform is an involution on equivariant functions, and so one may also
obtain from this an inversion formula.
Next, we split the Bessel function $J_m$ into two Hankel functions, corresponding
to projections onto outgoing and incoming waves. In particular, we have
\[
J_m(|x||\xi|)=\frac{1}{2} H_m^{(1)}(|x||\xi|) + \frac12 H_m^{(2)}(|x||\xi|)
\]
where $H_m^{(1)}$ is the order $m$ Hankel function of the first kind and $H_m^{(2)}$ 
is the order $m$ Hankel function of the second kind.
\begin{defin} 
Let $P^{+}$ denote the projection onto outgoing $m$-equivariant waves
  \begin{align*}[P^{+}f](x)
  & := 
  \frac{1}{4\pi^2}e^{im\theta} \int_{\mathbb{R}^{+}\times\mathbb{R}^{+}}H^{(1)}_m(|x||\xi|)J_m(|\xi||y|)f(|y|) d\xi dy
 \\ 
 & \phantom{:}= 
 \frac{1}{2}f(x) +\frac{i}{2\pi^2}\int_{\mathbb{R}^2}\left|\frac{y}{x}\right|^m\frac{f(x)}{|x|^2-|y|^2}dy \end{align*}
Here the second inequality follows from \cite[\S 6.521.2]{GrRy07} and analytic continuation.

In a similar way, we can define the projection $[P^{-}f](x)$ onto incoming waves
by replacing $H^{(1)}_m$ with $H^{(2)}_m$. 
In particular, $P^{-}f$ is the complex conjugate of $P^{+}f$.
  
We also use the notation $P^{\pm}_N$ for the composition $P^{\pm}P_N$.

As the equivariance class $m$ is clear from context, we omit it from the notation for $P^{\pm}$.
\end{defin}

\begin{lem}[Kernel Estimate] \label{lem:KE}
\begin{enumerate}
\item 
The operator $P^{+}+P^{-}$ acts as the identity on $m$-equivariant functions belonging to $L^2(\mathbb{R}^2)$.
\item 
For $|x|>N^{-1}$ and $t\gtrsim N^{-2}$,
 \[
 |[P^{\pm}_N e^{\mp it\Delta}](x,y)|
 \lesssim
 \left\{
 \begin{array}{ll}
 (|x||y||t|)^{-\frac12}: &|y|-|x|\sim Nt
 \\
 \frac{N^2}{\langle N|x| \rangle^{\frac12} \langle N|y| \rangle^{\frac12}} \langle N^2t + N|x|-N|y|\rangle^{-n}: &otherwise
\end{array}
\right.
\] 
for all $n\geq 0$.
\item 
For $|x|\gtrsim N^{-1}, t\lesssim N^{-2}$. 
 \[
 |[P^{\pm}_N e^{\mp it\Delta}](x,y)|\lesssim  \frac{N^2}{\langle N|x| \rangle^{\frac12} \langle N|y| \rangle^{\frac12}} \langle   N|x|-N|y|\rangle^{-n}  
 \] 
 for all $n\geq 0.$
 \item 
 For $N>0$ and any equivariant function $f\in L^2(\mathbb{R}^2)$,
 \[\|P^{\pm}P_{\geq N}f\|_{L^2(|x|\geq \frac{1}{100N})}\lesssim \|f\|_{L^2(\mathbb{R}^2)}\]
\end{enumerate}
\end{lem}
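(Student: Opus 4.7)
The plan is to treat the four parts in sequence, using Hankel inversion for (1), a stationary-phase analysis of an oscillatory-integral representation of the kernel for (2)--(3), and Schur's test combined with almost orthogonality for (4). Part (1) is essentially the Hankel inversion formula in disguise: writing an $m$-equivariant $L^2$ function as $f(r,\theta)=e^{im\theta}u(r)$, the Fourier transform reduces to the order-$m$ Hankel transform of $u$, and its inverse is again an order-$m$ Hankel transform. Splitting $J_m=\tfrac12 H_m^{(1)}+\tfrac12 H_m^{(2)}$ in the inversion kernel produces the operators $P^{+}$ (built from $H_m^{(1)}$) and $P^{-}$ (built from $H_m^{(2)}$), whose sum is by construction the identity on equivariant $L^2$.

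For parts (2) and (3), I represent the kernel of $P_N^{+}e^{-it\Delta}$ (the $P_N^{-}$ case being the complex conjugate) on equivariant data as the radial oscillatory integral
\[
K_N^{+}(t,x,y)=\frac{e^{im(\theta_x-\theta_y)}}{(2\pi)^2}\int_0^\infty H_m^{(1)}(|x|\rho)\,J_m(|y|\rho)\,\chi(\rho/N)\,e^{-it\rho^2}\,\rho\,d\rho,
\]
where $\chi$ isolates $\rho\sim N$. When $|x|,|y|\gtrsim N^{-1}$ one has $|x|\rho,|y|\rho\gtrsim 1$ on the support of $\chi$, so I substitute the asymptotic expansion $H_m^{(1)}(z)\sim\sqrt{2/(\pi z)}\,e^{i(z-m\pi/2-\pi/4)}(1+O(1/z))$ and its analogue for $J_m$. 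This reduces $K_N^{+}$ to a finite sum of oscillatory integrals with amplitude of size $(|x||y|)^{-1/2}$ (after pulling the $\rho^{1/2}$ factors out of the two square roots) and phases $\Phi(\rho)=\pm(|x|-|y|)\rho-t\rho^2$ in the resonant case or $\pm(|x|+|y|)\rho-t\rho^2$ in the manifestly non-stationary case. The stationary point $\rho^{\ast}=\pm(|x|-|y|)/(2t)$ lies in the frequency window $\rho\sim N$ exactly when $|y|-|x|\sim Nt$; in that regime stationary phase with $|\Phi''|=2t$ produces a factor $t^{-1/2}$ and combines with the amplitude to yield the bound $(|x||y|t)^{-1/2}$ of (2). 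Outside this resonant window, repeated integration by parts against $L=(i\Phi')^{-1}\partial_\rho$ gains at each step a factor of order $\langle N^2t+N(|x|-|y|)\rangle^{-1}$; together with the length-$N$ frequency window and the $(|x||y|)^{-1/2}$ amplitude this gives the off-resonant bound of (2) and, specializing to $t\lesssim N^{-2}$, the estimate of (3). A short separate argument using the convergent Taylor expansion of $J_m$ and the known short-distance behavior of $H_m^{(1)}$ covers the small-argument regions where $|x|\rho$ or $|y|\rho$ is of order one.

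For part (4), I decompose $P_{\geq N}=\sum_{M\geq N}P_M$ and apply (3) at $t=0$ to each $P^{\pm}P_M$; since $|x|\geq 1/(100N)\geq 1/(100M)$ for $M\geq N$, this is legitimate and yields a kernel bound of the form $M^2\langle M|x|\rangle^{-1/2}\langle M|y|\rangle^{-1/2}\langle M|x|-M|y|\rangle^{-n}$. A Schur-test computation (pass to polar coordinates and substitute $s=M|y|$) gives $\sup_x\int|K|\,dy\lesssim 1$ together with the symmetric bound, so each $P^{\pm}P_M$ is bounded on $L^2(|x|\geq 1/(100N))$ uniformly in $M\geq N$. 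Because $P^{\pm}$ commutes with $P_M$ on the Fourier side, the outputs $P^{\pm}P_M f$ remain frequency-localized at scale $M$ and hence are almost orthogonal in $L^2$, so the square-function estimate $\|P^{\pm}P_{\geq N}f\|_{L^2}^2\lesssim\sum_{M\geq N}\|P^{\pm}P_M f\|_{L^2}^2\lesssim\sum_{M}\|P_M f\|_{L^2}^2\lesssim\|f\|_{L^2}^2$ closes the argument.

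The main obstacle is the oscillatory-integral analysis underlying (2) and (3): one must carefully track the amplitude through the Hankel asymptotic expansion, treat uniformly the transition between the stationary and non-stationary regimes in $(t,|x|,|y|)$, and patch in the small-argument regions where the Hankel asymptotics break down, all while producing the arbitrary polynomial decay in $\langle N^2t+N(|x|-|y|)\rangle$. Once these kernel estimates are in hand, parts (1) and (4) follow by comparatively routine arguments.
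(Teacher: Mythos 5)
The paper does not prove this lemma at all: it cites \cite{KiTaVi09, KiViZh08} (specifically Prop.~6.2 of the former and Lem.~4.1 of the latter) and notes, in the surrounding text and in Remark 4.3, that the only adaptation needed for $m \neq 0$ is replacing $J_0, H_0^{(1)}$ by $J_m, H_m^{(1)}$, which share the same asymptotics at infinity while $J_m$ is better and $H_m^{(1)}$ worse at the origin. Your parts (1)--(3) follow exactly the route of those references -- Hankel inversion plus the splitting $J_m = \tfrac12 H_m^{(1)} + \tfrac12 H_m^{(2)}$ for (1), and stationary phase versus non-stationary integration by parts on the oscillatory representation of $[P^{\pm}_N e^{\mp it\Delta}](x,y)$ for (2)--(3), with the $\langle N|x|\rangle^{-1/2}\langle N|y|\rangle^{-1/2}$ weights interpolating between the oscillatory and small-argument regimes of the Bessel/Hankel factors. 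That part of the proposal is sound in outline (modulo a harmless sign convention in the phase $e^{\mp it\rho^2}$, which is what places the resonance at $|y|-|x| \sim Nt$ rather than $|x|-|y| \sim Nt$).

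Part (4) has a genuine gap. Your argument rests on the claim that ``$P^{\pm}$ commutes with $P_M$ on the Fourier side, [so] the outputs $P^{\pm}P_M f$ remain frequency-localized at scale $M$ and hence are almost orthogonal.'' This is false: $P^{\pm}$ is not a Fourier multiplier. It is the composition of the Fourier transform with a \emph{modified} inversion ($J_m$ replaced by $\tfrac12 H_m^{(1,2)}$), and in physical space it equals $\tfrac12 I$ plus a singular integral with kernel $c\,|y/x|^m (|x|^2-|y|^2)^{-1}$, which spreads Fourier support. Consequently $\langle P^{\pm}P_M f, P^{\pm}P_{M'} f\rangle$ does not vanish (or decay) for separated dyadic $M, M'$, and without orthogonality your Schur bounds only give $\|P^{\pm}P_{\geq N} f\|_{L^2(|x| \geq \frac{1}{100N})} \lesssim \sum_{M \geq N} \|P_M f\|_{L^2}$, an $\ell^1$ dyadic sum that is not controlled by $\|f\|_{L^2}$. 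This is exactly the delicate point of the lemma: as remarked after its statement, for $m \neq 0$ the operator $P^{\pm}$ is \emph{not} bounded on $L^2_m$ (because of the stronger singularity of $H_m^{(1)}$ at the origin), so both the frequency projection $P_{\geq N}$ and the spatial restriction $|x| \geq \frac{1}{100N}$ must enter the argument in an essential, coupled way. The correct route (as in \cite{KiViZh08}) is to work with the explicit physical-space kernel: the identity piece $\tfrac12 P_{\geq N}$ is trivially bounded; the principal-value singularity of $|y/x|^m(|x|^2 - |y|^2)^{-1}$ on the diagonal $|y| \sim |x|$ is an $L^2$-bounded Hilbert-transform-type operator; and the off-diagonal regions $|y| \ll |x|$ and $|y| \gg |x|$, where the factor $|y/x|^m$ is dangerous, are handled by combining the restriction $|x| \geq \frac{1}{100N}$ with the kernel decay of $P_{\geq N}$, not by summing uniform single-scale bounds.
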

This result is established in \cite{KiTaVi09, KiViZh08}; see for instance
\cite[Prop.~6.2]{KiTaVi09} and \cite[Lem.~4.1]{KiViZh08}.
The spatial cutoff in (4) of Lemma \ref{lem:KE} is only necessary when $m \neq 0$. 
The operators $P^{\pm}$ that act on radial functions
are bounded on $L^2$. However, their counterparts for $m \neq 0$ are no longer bounded on $L^2$ because
of the worse singularity of $H_m^{(1)}$ (and $H_m^{(2)}$) at the origin. 

With the help of the decay provided by the incoming/outgoing wave decompositions, we can prove
the following lemma.
\begin{thm}\label{ExtraR} 
Let $\phi$ be a global critical $m$-equivariant solution of \eqref{equiCSS}, almost periodic modulo scaling, 
and with $N(t) \lesssim 1$ uniformly in $t \in \R$. Then, for each $s \geq 0$, 
$\phi \in L^\infty_t H^s_m (\R \times \R^2)$.
\end{thm}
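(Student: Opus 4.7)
The plan is to mirror the self-similar argument of \S 4.1, but with two structural modifications dictated by the global (rather than self-similar) scenario: since $N(t) \lesssim 1$ uniformly we work with fixed frequency thresholds instead of $t^{-1/2}$, and since the solution does not blow up at any finite time, the analogue of $\ma, \sa, \na$ must be made small using a genuinely new input, namely the incoming/outgoing wave decomposition and the kernel estimates of Lemma \ref{lem:KE}. Concretely, I would introduce
\[
\mathcal{M}(N) = \sup_t \|P_{>N}\phi(t)\|_{L^2_x}, \quad
\mathcal{S}(N) = \sup_T \|P_{>N}\phi\|_{L^4_{t,x}([T,T+1]\times\R^2)},
\]
and $\mathcal{N}(N)$ defined analogously on the nonlinearity $\Lambda(\phi)$ in the $L^{4/3}_{t,x}+L^1_t L^2_x$ norm. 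A standard consequence of almost periodicity and $N(t)\lesssim 1$ is that $\mathcal{M}(N)\to 0$ as $N\to\infty$, and the Strichartz estimates together with the frequency-localization analysis of \S \ref{sec:freq} and Lemma \ref{lem:nonlinearity} then force $\mathcal{S}(N), \mathcal{N}(N)\to 0$ as well.

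The central step, and the one that replaces the scale invariance used in the self-similar case, is a double Duhamel representation
\[
P_{\geq N}^{\pm}\phi(t) = \mp i \int_t^{\pm\infty} e^{i(t-s)\Delta} P_{\geq N}^{\pm}\Lambda(\phi)(s)\,ds,
\]
valid in a suitable weak sense. To justify these one must argue that $e^{i(t-s)\Delta}P_{\geq N}^{\pm}\phi(s)\rightharpoonup 0$ as $s\to\pm\infty$: by almost periodicity, $\phi(s)$ is spatially concentrated in a ball of radius $\lesssim 1/N(s)\lesssim 1$, and the kernel bounds of Lemma \ref{lem:KE} then push the outgoing (resp. incoming) part away from any fixed compact set as $s\to +\infty$ (resp. $s\to-\infty$).

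With these representations in hand, I would derive a bootstrap inequality of the same shape as Lemma \ref{NLEstimate} and Lemma \ref{Decay}. Splitting the $s$-integration dyadically in $|t-s|$, the kernel estimates of Lemma \ref{lem:KE} supply rapid decay off the characteristic cone $|y|-|x|\sim N(t-s)$, and combining this gain with the cubic/quintic decomposition \eqref{Nsplit} and the bounds of \S \ref{sec:eqCauchy}--\S \ref{sec:freq} (handling the high/medium/low split on each nonlinear factor as in the proof of Lemma \ref{NLEstimate}) should yield
\[
\mathcal{M}(N) \lesssim \eta\,\mathcal{S}(N/10)\,\mathcal{M}(\sqrt{N}) + N^{-\delta}\bigl[\mathcal{M}(N/10)+\mathcal{N}(N/10)\bigr]
\]
for some $\delta>0$. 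Iterating as in Lemma \ref{Decay} and the theorem after it promotes this to $\mathcal{M}(N)\lesssim N^{-s}$ for any $s\geq 0$, which is equivalent to $\phi\in L^\infty_t H^s_m$.

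The main obstacle I expect is the interaction between the nonlocal nonlinear terms $\tfrac{A_\theta^2}{r^2}\phi$ and $A_0^{(1)}\phi$ (where $A_\theta$ may carry the high-frequency input) and the spatial cutoff in part (4) of Lemma \ref{lem:KE}, which is forced by the fact that $H_m^{(1)}$ and $H_m^{(2)}$ are singular at the origin for $m\neq 0$ and hence $P^{\pm}$ are unbounded on $L^2$ in the nonradial equivariant case. On $|x|\gtrsim 1/N$ the kernel bounds give exactly what is needed; the complementary region $|x|\lesssim 1/N$ has small Lebesgue measure and must be absorbed using the pointwise $L^\infty$ bound for $A_\theta$ from Lemma \ref{lem:Atheta} together with $L^p$-Bernstein, exactly as in the remark following the Fourier transform of $A_0^{(2)}$. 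Book-keeping the two regimes through the bootstrap, while keeping the cubic and quintic contributions separate in the sum norm $L^{4/3}+L^1 L^2$, is where the technical work concentrates.
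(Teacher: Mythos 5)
Your outline follows the paper's proof of Theorem \ref{ExtraR} in all essential respects: the reduction to $\mathcal{M}(\lambda)\to 0$ via almost periodicity and $N(t)\lesssim 1$, the forward/backward Duhamel formulas interpreted as weak $L^2$ limits, the incoming/outgoing decomposition $P^{\pm}$ combined with the kernel estimates of Lemma \ref{lem:KE}, the short-time/long-time splitting, the stationary versus non-stationary regions in the long-time integrals, and a bootstrap iterated to give $\mathcal{M}(\lambda)\lesssim_s\lambda^{-s}$. (The paper states the bootstrap simply as $\mathcal{M}(\lambda)\lesssim\eta\,\mathcal{M}(\lambda/8)$ rather than carrying the triple $\mathcal{M},\mathcal{S},\mathcal{N}$ over from the self-similar case, but that is bookkeeping.)

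One step of your outline as written would not close. For the region $|x|\lesssim\lambda^{-1}$ you propose to absorb the contribution using its small Lebesgue measure together with Bernstein; but
\[
\|\phi_{\geq\lambda}(0)\|_{L^2(|x|\leq\lambda^{-1})}\leq |B_{\lambda^{-1}}|^{1/2}\,\|\phi_{\geq\lambda}(0)\|_{L^\infty_x}\lesssim\lambda^{-1}\cdot\lambda\,\|\phi_{\geq\lambda}(0)\|_{L^2_x},
\]
which merely returns $\mathcal{M}(\lambda)$ with no gain, so the bootstrap constant is not small there. The paper instead never applies $P^{\pm}$ on the inner ball: it represents $(1-\chi_\lambda)\phi_{\geq\lambda}(0)$ via the ordinary forward-in-time Duhamel formula using the integral kernel of the un-split propagator $P_\mu e^{-it\Delta}$ (which is bounded on $L^2$ and whose kernel bounds need no spatial restriction), and reserves the incoming/outgoing splitting for $\chi_\lambda\phi_{\geq\lambda}(0)$, where part (4) of Lemma \ref{lem:KE} restores the $L^2$-boundedness of $P^{\pm}P_{\geq\lambda}$ that fails globally when $m\neq 0$. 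With that substitution in place, your argument coincides with the paper's.
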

It suffices to prove
\[
\mathcal{M}(\lambda):=\|\phi_{\geq \lambda}\|_{L^\infty_tL^2_x(\mathbb{R}\times\mathbb{R}^2)}
\lesssim \lambda^{-s}
\]
By mass conservation,
\[
\|\phi\|_{L^\infty_tL^2_x(\mathbb{R}\times\mathbb{R}^2)}\lesssim_\phi 1
\] 
and so $\mathcal{M}(\lambda)\gtrsim 1$.
From almost periodicity and from the boundedness of $N(t)$, we get 
\[
\lim_{\lambda \rightarrow \infty}\|\phi_{\geq \lambda}\|_{L^\infty_t L^2_x(\mathbb{R}\times\mathbb{R}^2)}=0,
\] 
which means that $\mathcal{M}(\lambda)\rightarrow 0$.
 
As we can see, Theorem~\ref{ExtraR} follows from the following lemma
\begin{lem}[Regularity]
Let $\phi$ be as in Theorem~\ref{ExtraR} and let $\eta > 0$ be a small number.
Then
\[
\mathcal{M}(\lambda)\lesssim \eta\mathcal{M}(\frac{\lambda}{8})
\] 
whenever $\lambda$ is sufficiently large, depending upon $\phi$ and $\eta$.       
\end{lem}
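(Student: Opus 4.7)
The plan is to adapt the incoming/outgoing wave method of \cite{KiTaVi09, KiViZh08}, using the projections $P^{\pm}$ and the kernel estimates of Lemma~\ref{lem:KE}, to the covariant nonlinearity $\Lambda(\phi)$. Since $\phi \in L^\infty_t L^2_m$ and $N(t) \lesssim 1$, the solution is ``compact in frequency from below'' uniformly in time, and the nonlinear estimates of Section~\ref{sec:eqCauchy} together with the frequency-localization analysis of Section~\ref{sec:freq} are already in place. Throughout we restrict to the region $|x| \gtrsim \lambda^{-1}$, which is harmless because for $m$-equivariant $L^2$ functions $\|P_{\geq \lambda} f\|_{L^2(|x|\leq \lambda^{-1}/100)}$ is controlled by $\|f\|_{L^2(\mathbb{R}^2)}$ up to a factor that can be absorbed in $\eta$ after choosing $\lambda$ large (using the spatial cutoff in part (4) of Lemma~\ref{lem:KE}).

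First, I would write the Duhamel formula and decompose
\[
P_{\geq \lambda} \phi(t) = (P^{+} + P^{-}) P_{\geq \lambda} \phi(t),
\]
and, for the outgoing piece $P^{+} P_{\geq \lambda} \phi(t)$, represent it as $-i \int_t^{+\infty} e^{-i(s-t)\Delta} P^{+} P_{\geq \lambda} \Lambda(\phi)(s)\, ds$, using that $\phi$ scatters in the weak sense coming from almost periodicity together with $N(t) \lesssim 1$ (so the boundary term at $+\infty$ vanishes after projecting to outgoing frequencies). Symmetrically the incoming piece is integrated backward from $-\infty$. One then estimates
\[
\|P^{+}_{\geq \lambda}\phi(t)\|_{L^2_x(|x|\gtrsim \lambda^{-1})}
\lesssim
\int_t^{\infty} \bigl\|P^{+}_{\geq \lambda} e^{-i(s-t)\Delta} \Lambda(\phi)(s)\bigr\|_{L^2_x(|x|\gtrsim \lambda^{-1})}\, ds,
\]
and similarly for $P^{-}$.

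Second, I would split the time integral into the near region $|s-t| \lesssim \lambda^{-2}$ and the far region $|s-t| \gtrsim \lambda^{-2}$. On the near region, the kernel estimate (3) of Lemma~\ref{lem:KE} together with Schur's test and the nonlinearity bounds $\|\Lambda(\phi)\|_{L^{4/3}_{t,x} + L^1_t L^2_x}$ of Lemmas~\ref{lem:A0}--\ref{lem:nonlinearity} yields a contribution $\lesssim \eta \mathcal{M}(\lambda/8)$, provided $\lambda$ is large enough: the key point is that by the frequency-localization lemmas of Section~\ref{sec:freq}, $P_{\geq \lambda} \Lambda(\phi)$ forces at least one input of $\phi$ to be at frequency $\gtrsim \lambda/8$, and then almost periodicity (applied at the time-dependent scale) makes that input small. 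On the far region, the dispersive decay of the kernel estimate (2) provides additional smallness in $|s-t|$ that integrates; one again converts this by Schur into $\mathcal{M}(\lambda/8)$ times a small factor.

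Third, I would handle the three types of nonlinear terms separately using the decomposition \eqref{Nsplit} already employed in Lemma~\ref{NLEstimate}. For the cubic piece $\Lambda_3$ the argument parallels \cite{KiTaVi09}, using H\"older in mixed norms and the frequency restriction $P_N \Lambda_3$ requires at least one input above frequency $N/3$. For the quintic pieces $\Lambda_{5,1}, \Lambda_{5,2}$, the nonlocal H\"older estimate \eqref{Holder1} and the Fourier-transform identities of Lemmas~\ref{lem:FourierA}--\ref{lem:A0} show that the relevant high-frequency output still demands a high-frequency $\phi$ input, so the same kernel-estimate/Schur argument applies, with the $L^1_t L^2_x$ portion of the norm absorbing the term where $A_\theta$ carries the high frequency. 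The main obstacle I anticipate is the quintic term $\Lambda_{5,2}$ with a high-frequency input entering through $A_\theta$: there one must use the $L^1_t L^2_x$ piece of $\mathcal{N}$, combined with the equivariant identity $\Im(Q_{12}(\bar{\phi},\psi)) = m r^{-1} \partial_r \Re(\bar\phi \psi)$ of the remark following Lemma~\ref{lem:A0}, to keep the estimate closed and extract the factor $\eta$; together with careful use of Lemma~\ref{lem:KE}(4) to deal with the endpoint behavior of $P^{\pm}$ at the origin in the $m\neq 0$ case. Summing the contributions and choosing $\lambda = \lambda(\phi,\eta)$ sufficiently large, we obtain $\mathcal{M}(\lambda) \lesssim \eta \mathcal{M}(\lambda/8)$, as claimed.
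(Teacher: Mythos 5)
Your overall strategy is the one the paper uses: express $\phi_{\geq\lambda}(t_0)$ via the no-waste Duhamel formula, split into outgoing waves integrated forward in time and incoming waves integrated backward, separate a short time interval from the long-time tails, and combine the kernel estimates of Lemma~\ref{lem:KE} with the frequency-localization results of \S\ref{sec:freq} (which force a high-frequency input into $\Lambda(\phi)$), the nonlocal H\"older estimates \eqref{Holder1}--\eqref{Holder2}, and the decomposition \eqref{Nsplit}, reserving the $L^1_tL^2_x$ component of the norm for the quintic term in which $A_\theta$ carries the high frequency. All of this matches the paper's reduction to \eqref{sufflem} and Propositions~\ref{prop:ste}--\ref{prop:ltet}. (The paper fixes the splitting time $\delta=\delta(\phi,\eta)$ rather than splitting at $|s-t|\sim\lambda^{-2}$, but that is a reparametrization, not a substantive difference.)

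The genuine gap is your opening claim that the region $\{|x|\leq\lambda^{-1}\}$ is harmless. Part (4) of Lemma~\ref{lem:KE} bounds $\|P^{\pm}P_{\geq N}f\|_{L^2(|x|\geq 1/(100N))}$ by $\|f\|_{L^2}$; it says nothing about the complementary region, and in any case an $O(1)$ bound by $\|f\|_{L^2}$ cannot be ``absorbed in $\eta$'': what is needed there is a bound by $\eta\,\mathcal{M}(\lambda/8)$, and there is no general principle making the high-frequency $L^2$ mass inside a ball of radius $\lambda^{-1}$ negligible (a bump at scale $\lambda^{-1}$ is a counterexample), nor does almost periodicity help, since it controls mass far from the origin rather than near it. The paper therefore treats $(1-\chi_\lambda)\phi_{\geq\lambda}(0)$ by a separate Duhamel argument --- see \eqref{intker} --- using the kernel of $P_\mu e^{-it\Delta}$ \emph{without} the incoming/outgoing splitting, precisely because $P^{\pm}$ fail to be $L^2$-bounded near the origin when $m\neq 0$ (the worse singularity of $H_m^{(1)}$ there). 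That inner-ball piece is then run through the same short-time and long-time estimates. Once you add this component, your plan coincides with the paper's proof.
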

We prove this lemma by showing that 
\begin{equation} \label{sufflem}
 \|\phi_{\geq \lambda}(t_0)\|_{ L^2_x( \mathbb{R}^2)}\lesssim \eta \mathcal{M}(\frac{\lambda}{8})
\end{equation}
for all time $t_0$ and $\lambda$ sufficiently large. 
      
Let us explain the idea briefly: Because we are at a short time interval, almost periodicity and boundedness of $N(t)$ imply that the solution has little mass at high frequency. When we are at a long time interval, we can split into incoming and outgoing waves, which will diminish as it moves away from the origin. 

We carry out the ideas in detail. We can first assume $t_0=0$ by time translation.
Let $\chi_\lambda(x)$ denote the characteristic function of $[\frac{1}{\lambda},\infty)$.
      
For the portion of the frequency localized solution $\phi_{\geq \lambda}$ in the ball $\{ |x| \leq \lambda^{-1} \}$,
we get (see \cite[(5-7)]{KiViZh08}) that
\begin{align}
(1-\chi_\lambda(x))\phi_{\geq \lambda}(0) 
 =& \lim_{T\rightarrow \infty} i\int_0^T  (1-\chi_\lambda(x)) e^{-it\Delta} P_{\geq \lambda}\Lambda(\phi)(t)dt \nonumber
 \\
 =&\;  i\int_0^\delta (1-  \chi_\lambda(x))e^{-it\Delta} P_{\geq \lambda}\Lambda(\phi)(t)dt  \nonumber
 \\      
 &+\lim_{T\rightarrow \infty} \sum_{\mu \geq \lambda} i \int_\delta^T \int_{\mathbb{R}^2} 
 (1-\chi_\lambda(x)) [P_\mu e^{-it\Delta}](x,y) P_{\mu}\Lambda(\phi)(t)(y) dy dt \label{intker}
 \end{align}
where for \eqref{intker} we use the integral kernel of $P_\mu e^{-i t \Delta}$.

Next, for the portion of $\phi_{\geq \lambda}$ outside of the ball $\{ |x| \geq \lambda^{-1} \}$,
we split into incoming and outgoing waves propagating backward and forward in time (respectively):
\begin{align*}
\chi_\lambda(x)\phi_{\geq \lambda}(0) 
=& \lim_{T\rightarrow \infty} i\int_0^T   \chi_\lambda(x) e^{-it\Delta} P_{\geq \lambda}\Lambda(\phi)(t)dt 
      \\
=&\;  i\int_0^\delta \chi_\lambda(x)P^{+}e^{-it\Delta} P_{\geq \lambda}\Lambda(\phi)(t)dt  
- i\int_{-\delta}^0  \chi_\lambda(x) P^{-}e^{-it\Delta} P_{\geq \lambda}\Lambda(\phi)(t)dt 
\\      
& +\lim_{T\rightarrow \infty} \sum_{\mu \geq \lambda} i\int_\delta^T 
\int_{\mathbb{R}^2} \chi_\lambda(x)[P^{+}_\mu e^{-it\Delta}](x,y) P_{\mu }\Lambda(\phi)(t)(y) dy dt     
\\
& -\lim_{T\rightarrow \infty} \sum_{\mu \geq \lambda} i\int_{-T}^{-\delta} 
\int_{\mathbb{R}^2} \chi_\lambda(x)[P^{-}_\mu e^{-it\Delta}](x,y) P_{\mu }\Lambda(\phi)(t)(y) dy dt 
\end{align*}            
As explained in \cite{KiViZh08}, this is to be interpreted as a weak $L^2$ limit, and we have 
\[
f_T\rightarrow f ~ weakly \Longrightarrow \|f\|\leq \limsup\|f_T\|
\]
The main point in the above two formulas is to cut our estimate into four different regions, according to whether we are in short/long time intervals and whether we are inside/outside the ball $\{ |x|\leq \lambda^{-1} \}.$
  
The following short time estimate works for any spatial region.            
\begin{prop}[Short-time estimate] \label{prop:ste}
Given any $\eta>0$ we can find some $\delta=\delta(\phi,\eta)>0$ such that 
\[
\| \int_0^\delta e^{-it\Delta} P_{\geq \lambda}\Lambda(\phi)(t)dt \|_{L^2}\leq \eta \mathcal{M}(\frac{\lambda}{8})
\]
provided $\lambda$ is large enough. 

Similar estimates hold on the time interval $[-\delta, 0]$ and for incoming/outgoing waves
under premultiplication by $\chi_{\lambda} P^{\pm}$.
\end{prop}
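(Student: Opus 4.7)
The plan is to dualize the Strichartz estimate, use the frequency-localization structure of $\Lambda(\phi)$ established in \S\ref{sec:freq} to extract a distinguished factor of $P_{> \lambda/C}\phi$ (which is controlled in $L^\infty_t L^2_x$ by $\mathcal{M}(\lambda/8)$), and finally draw the smallness from the absolute continuity of the $L^4_{t,x}$ norm of $\phi$ on the short slab $[0,\delta]\times\mathbb{R}^2$.

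Concretely, the $L^2 \leftrightarrow L^{4/3}_{t,x} + L^1_t L^2_x$ form of Lemma~\ref{lem:Strichartz} reduces matters to bounding
\[
\|P_{\geq \lambda}\Lambda(\phi)\|_{L^{4/3}_{t,x}([0,\delta]\times\mathbb{R}^2) \,+\, L^1_t L^2_x([0,\delta]\times\mathbb{R}^2)}
\lesssim \eta\,\mathcal{M}(\lambda/8).
\]
Substituting the Littlewood--Paley decomposition $\phi = P_{\leq \lambda/16}\phi + P_{>\lambda/16}\phi$ into every occurrence of $\phi$ in $\Lambda(\phi)$ (including those hidden inside $A_\theta$, $A_0^{(1)}$, $A_0^{(2)}$), and appealing to the Fourier-support identities of \S\ref{sec:freq}---in particular the formulas for $\hat{A}_\theta$, $\mathcal{F}(r^{-2}A_\theta)$, $\hat{A}_0^{(1)}$ and the annihilation $P_N([r\partial_r]^{-1}|P_{<N}\phi|^2) = 0$---one sees that every surviving term of $P_{\geq \lambda}\Lambda(\phi)$ must carry at least one factor of $P_{>\lambda/16}\phi$, whose $L^\infty_t L^2_x$ norm is controlled by $\mathcal{M}(\lambda/8)$.

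For each such surviving term I would apply H\"older exactly as in Lemma~\ref{lem:nonlinearity} and Lemma~\ref{NLEstimate}: the distinguished factor $P_{>\lambda/16}\phi$ goes into $L^\infty_t L^2_x$ and the remaining $\phi$'s go into $L^4_{t,x}([0,\delta])$. The cubic pieces $|\phi|^2\phi$, $2m\, r^{-2}A_\theta\,\phi$, $A_0^{(2)}\phi$, together with the quintic piece $r^{-2}A_\theta^2\,\phi$, are placed in $L^{4/3}_{t,x}$ via Lemmas~\ref{lem:Atheta}--\ref{lem:A0} and \eqref{Holder2}. For the delicate quintic piece $A_0^{(1)}\phi$ with the high-frequency input sitting inside $A_\theta$, I would switch to the $L^1_t L^2_x$ branch of the sum space, estimating $A_0^{(1)}$ in $L^1_t L^\infty_x$ via \eqref{A01}, exactly as in Case~1 of Lemma~\ref{NLEstimate}. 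Each resulting bound has the form $\mathcal{M}(\lambda/8)\cdot \|\phi\|_{L^4_{t,x}([0,\delta])}^{\alpha}$ for some $\alpha \geq 2$.

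The smallness then comes for free: Theorem~\ref{thm:Cauchy} together with almost periodicity and the standing hypothesis $N(t)\lesssim 1$ imply $\|\phi\|_{L^4_{t,x}([-T,T]\times\mathbb{R}^2)} < \infty$ on any compact interval, hence $\|\phi\|_{L^4_{t,x}([0,\delta])} \to 0$ as $\delta \to 0$ by absolute continuity, which yields the factor $\eta$. The variant with $\chi_\lambda P^{\pm}$ premultiplying the integrand follows by composing the estimate above with $\chi_\lambda P^{\pm}$, bounded on $L^2$ on the support of $\chi_\lambda$ by part~(4) of Lemma~\ref{lem:KE}; the time-reversed version on $[-\delta,0]$ is identical. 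The principal technical point is the bookkeeping of the frequency extraction: one must verify that each non-local operator $A_\theta$, $A_0^{(1)}$, $A_0^{(2)}$ respects a decomposition that makes at least one explicit factor of $P_{>\lambda/16}\phi$ visible, and one must correctly toggle between the $L^{4/3}_{t,x}$ and $L^1_t L^2_x$ branches of the dual Strichartz sum according to whether the high-frequency input is the free $\phi$ factor or is buried inside a non-local operator.
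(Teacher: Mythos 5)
Your argument is essentially the paper's intended proof: the paper omits the details, deferring to \cite[Lemma 7.3]{KiTaVi09} together with the nonlocal H\"older estimates \eqref{Holder1}--\eqref{Holder2} and the frequency-localization lemmas of \S\ref{sec:freq} — precisely the ingredients you assemble, including the toggle between the $L^{4/3}_{t,x}$ and $L^1_tL^2_x$ branches when the high-frequency input sits inside $A_\theta$. One bookkeeping correction: perform the Littlewood--Paley splitting at frequency $\lambda/8$ rather than $\lambda/16$, since $\|P_{>\lambda/16}\phi\|_{L^\infty_tL^2_x}$ is controlled only by $\mathcal{M}(\lambda/16)\geq\mathcal{M}(\lambda/8)$, whereas a splitting at $\lambda/8$ still forces a high-frequency input (five inputs below $\lambda/8$ cannot produce output frequency $\geq\lambda$) and yields the stated bound.
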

 
The proof is similar to that of \cite[Lemma 7.3]{KiTaVi09}, the main difference being that we must use the nonlocal
H\"older estimate \eqref{Holder1} and the estimate \eqref{Holder2}. As in the proof of extra regularity for the self-similar
case, we also use the fact that a high frequency output of $\Lambda(\phi)$ implies that there is a high frequency input term.
The details of how to perform the decomposition and apply \eqref{Holder1} and \eqref{Holder2} are performed similarly, and so we
omit the proofs.
  
To work with the long-time estimate, we notice that the integral kernels 
$P_\mu e^{-it\Delta}$ and $P^{\pm}_\mu e^{-it\Delta}$ have a stationary point when $ |x| -|y|\sim \mu|t|$. 
Hence we divide the region into $|y|\gtrsim \mu |t|$ and $|y|\ll \mu |t|$.

Take $\chi_{k}$ to be the characteristic function for 
\[
\{(t,y)| 2^k\delta \leq |t| \leq 2^{k+1}\delta, |y|\gtrsim \mu |t|\}
\]
\begin{prop}[Long-time estimate: main contribution] 
Let $0<\eta<1$ and $\delta $ be as in Proposition \ref{prop:ste}.
Then
\[ 
\sum_{\mu \geq \lambda} \sum_{k} \|\int_\delta^T 
\int_{\mathbb{R}^2} [P_\mu e^{-it\Delta}](x,y)  \chi_k(t,y)P_{\mu}\Lambda(\phi)(t)(y) dy dt
\|_{L^2_x} \lesssim \eta \mathcal{M}(\frac{\lambda}{8})
\]
for $\lambda$ large enough. A similar estimate holds under
premultiplication by $\chi_{k} P^{\pm}$.
\end{prop}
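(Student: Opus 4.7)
The plan is to closely follow the template of the analogous estimate in \cite{KiViZh08}, combining three ingredients: (i) the dispersive decay of the outgoing-wave kernel on the stationary strip $|y|-|x|\sim \mu t$ from Lemma~\ref{lem:KE}(2); (ii) the high-frequency structure of $P_\mu\Lambda(\phi)$, which by the decomposition lemmas of \S\ref{sec:freq} must contain at least one $\phi$-input at frequency $\gtrsim \mu/C$; and (iii) the spatial separation $|y|\gtrsim \mu t$ forced by $\chi_k$, which pushes the interaction well away from the stationary diagonal.

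First I would split $P_\mu = P^+_\mu + P^-_\mu$. For $t\in[\delta,T]$ only the outgoing factor $P^+_\mu e^{-it\Delta}$ carries the stationary/resonant contribution; the incoming factor $P^-_\mu e^{-it\Delta}$ has no stationary phase for $t>0$ and contributes a rapidly decaying, easily summable remainder through the Schwartz-type tail in Lemma~\ref{lem:KE}(2). For the outgoing contribution, fix $\mu \geq \lambda$ and $k$, set $t_k := 2^k\delta$, and apply Minkowski's inequality to bring the $L^2_x$-norm inside the $t$-integral. A Schur-type argument exploiting the thin stationary strip of radial width $\sim \mu^{-1}$, the rapid off-strip kernel decay, and the cutoff $|y|\gtrsim \mu t$ yields an effective $L^2_y\to L^2_x$ operator bound for the integral operator which is sufficiently decaying in $(\mu,t)$ to absorb the factor $t_k$ from the time integration and leave an extra $\mu$-decay. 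On the nonlinearity side, the splitting \eqref{Nsplit} together with Lemma~\ref{Holder}, charge conservation, and the $L^4_{t,x}\cap L^\infty_t L^2_x$ a priori bounds from \S\ref{sec:eqCauchy} controls the high-frequency input via $\sup_t \|P_\mu\Lambda(\phi)(t)\|_{L^2_y} \lesssim_\phi \mathcal{M}(\mu/C)$.

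Combining these estimates and using the monotonicity $\mathcal{M}(\mu/C)\leq \mathcal{M}(\lambda/8)$, the $(k,\mu)$-summation collapses to a total bound of the form $\lambda^{-\beta}\mathcal{M}(\lambda/8)$ for some $\beta>0$, which is $\leq \eta\mathcal{M}(\lambda/8)$ once $\lambda$ is large enough, depending on $\phi$ and $\eta$. The variant with premultiplication by $\chi_k P^\pm$ is identical after invoking Lemma~\ref{lem:KE}(4) to control $P^\pm$ on the spatial region $\{|x|\gtrsim \lambda^{-1}\}$ where $\chi_k$ is supported. The main technical obstacle is extracting enough dispersive decay to close the $k$-summation: a naive Schur bound of order $(\mu^2 t)^{-1/2}$ leaves an unsummable factor $t_k^{1/2}$ after time integration, so one must exploit the $\chi_k$-cutoff $|y|\gtrsim \mu t$ simultaneously with the pointwise kernel bound $(|x||y||t|)^{-1/2}$ and the rapidly decaying off-stationary tail in Lemma~\ref{lem:KE}(2) to obtain an effective operator-norm bound of the form $(\mu^2 t_k)^{-\alpha}$ with $\alpha>1$, mirroring the delicate analysis of the corresponding step in \cite[Sec.~5]{KiViZh08}.
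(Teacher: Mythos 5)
There is a genuine gap, and it stems from reading the geometry of the decomposition backwards. The cutoff $\chi_k$ restricts to $\{|y|\gtrsim \mu|t|\}$, which \emph{contains} the stationary set $|y|-|x|\sim \mu t$ of the kernel $[P^{+}_\mu e^{-it\Delta}](x,y)$ (since $|y|\geq |x|+c\mu t\geq c\mu t$ there); that is precisely why the paper calls this the ``main contribution'' and reserves the non-stationary region $|y|\ll\mu|t|$ for the tail estimate. Your plan asserts that $\chi_k$ ``pushes the interaction well away from the stationary diagonal'' and that a Schur-type argument yields an $L^2_y\to L^2_x$ operator bound of size $(\mu^2 t_k)^{-\alpha}$ with $\alpha>1$. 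No such bound holds on this region: on the stationary set the operator $\chi_{|y|\gtrsim\mu t}P^{+}_\mu e^{-it\Delta}$ is essentially a piece of a unitary propagator and its $L^2\to L^2$ norm is $O(1)$, not decaying in $\mu$ or $t$. The rapidly decaying factor $\langle N^2t+N|x|-N|y|\rangle^{-n}$ from Lemma \ref{lem:KE}(2) is exactly what is \emph{unavailable} here and is instead the engine of the tails proposition. Consequently your claimed final bound $\lambda^{-\beta}\mathcal{M}(\lambda/8)$ cannot be produced by kernel decay alone.

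The missing idea is that the smallness $\eta$ for the main contribution comes from the solution, not the kernel: on the support of $\chi_k$ one has $|y|\gtrsim\mu t\geq\lambda\delta 2^{k}$, and since $\phi$ is almost periodic modulo scaling with $N(t)\lesssim 1$, Definition \ref{def:ap} gives $\int_{|y|\geq C(\eta)}|\phi(t,y)|^2\,dy\leq\eta$ uniformly in $t$; so at least one $\phi$-input of each term of $\Lambda(\phi)$ (split via \eqref{Nsplit} and the nonlocal H\"older estimates \eqref{Holder1}--\eqref{Holder2}) carries a factor $\eta$ once $\lambda\delta$ exceeds $C(\eta)$. The kernel bound $(|x||y|t)^{-1/2}$ is then used only through the weight $|y|^{-1/2}\lesssim(\mu t_k)^{-1/2}$ and $\||x|^{-1/2}\|_{L^2(|x|\lesssim R)}\sim R^{1/2}$, paired with a weighted $L^1_y$ (not $L^2_y$) norm of $P_\mu\Lambda(\phi)$, to make the $(k,\mu)$-sums converge; this is the scheme of \cite[\S 7]{KiTaVi09} and \cite[\S 5]{KiViZh08} that the paper is invoking. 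Your uniform-in-$t$ bound $\|P_\mu\Lambda(\phi)(t)\|_{L^2_y}\lesssim\mathcal{M}(\mu/C)$ is also not justified at the regularity available at this stage of the bootstrap. As written, the proposal proves (a version of) the tail estimate, not the main contribution.
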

Now we just need to estimate the tails coming from the region 
\[
\{(t,y)| 2^k\delta \leq |t| \leq 2^{k+1}\delta, |y|\ll \mu |t|\}
\]
Since this is the non-stationary region, the kernels have better decay.
Let $\tilde{\chi}_k$ denote the characteristic function of this region. Then we have the following tail estimate. 
\begin{prop}[Long-time estimate: tails] \label{prop:ltet}
Let $0<\eta<1$ and $\delta $ be as in Proposition \ref{prop:ste}.
Then
\[ 
\sum_{\mu \geq \lambda} \sum_{k} \|\int_\delta^T 
\int_{\mathbb{R}^2} [P_\mu e^{-it\Delta}](x,y)  \tilde{\chi}_k(t,y)P_{\mu }\Lambda(\phi)(t)(y) dy dt
\|_{L^2_x} \lesssim \eta \mathcal{M}(\frac{\lambda}{8})
\]
for $\lambda$ large enough. A similar estimate holds under premultiplication $\tilde{\chi}_k P^{\pm}$.
\end{prop}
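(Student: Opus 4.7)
The plan is to exploit the rapid decay of the kernels $[P_\mu e^{-it\Delta}](x,y)$ and $[P^{\pm}_\mu e^{-it\Delta}](x,y)$ in the non-stationary region $\{|y|\ll \mu|t|\}$. On this region, the quantity $\mu^2 t + \mu|x| - \mu|y|$ is comparable to $\mu^2 t \gtrsim \mu^2 2^k\delta$, so case (2) of Lemma \ref{lem:KE} gives, for any $n$,
\[
\bigl|[P^{\pm}_\mu e^{-it\Delta}](x,y)\bigr|
\lesssim
\frac{\mu^2}{\langle \mu|x|\rangle^{1/2}\langle \mu|y|\rangle^{1/2}}\,
\langle \mu^2 2^k\delta\rangle^{-n}.
\]
The case (3) estimate handles the (small) portion with $2^k\delta\lesssim \mu^{-2}$. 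The strategy is then to insert this bound, dualize against a test function $g\in L^2_x$, and convert the double integral into a product of an $L^2_y$ norm of $P_\mu\Lambda(\phi)(t)$ with a Schur-type bound derived from the kernel.

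First, by Minkowski and Cauchy--Schwarz in $y$ (or equivalently Schur's test applied to the kernel $\mu^2\langle\mu|x|\rangle^{-1/2}\langle\mu|y|\rangle^{-1/2}$, which has bounded row/column $L^2$ mass on the support of the spatial cutoffs appearing in part (4) of Lemma \ref{lem:KE}), the spatial integration produces a bound
\[
\Bigl\|\int_{\mathbb{R}^2}[P^{\pm}_\mu e^{-it\Delta}](x,y)\,\tilde\chi_k(t,y)\,P_\mu\Lambda(\phi)(t,y)\,dy\Bigr\|_{L^2_x}
\lesssim \langle \mu^2 2^k\delta\rangle^{-n}\,\|P_\mu\Lambda(\phi)(t)\|_{L^2_y}.
\]

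Second, I would control $\|P_\mu\Lambda(\phi)(t)\|_{L^2_y}$ on the slab $|t|\sim 2^k\delta$ by splitting $\Lambda(\phi)$ into the cubic and quintic pieces of \eqref{Nsplit} and applying the nonlocal H\"older estimates \eqref{Holder1}, \eqref{Holder2} as in Lemma \ref{NLEstimate}. The frequency localization lemmas of \S\ref{sec:freq} force any nontrivial $P_\mu\Lambda(\phi)$ to contain at least one input at frequency $\gtrsim \mu/8$; that input contributes a factor $\mathcal{M}(\lambda/8)$ (since $\mu\ge\lambda$), while the remaining lower-frequency inputs are controlled by fixed Strichartz norms of $\phi$ that are bounded uniformly on $\mathbb{R}$ thanks to $N(t)\lesssim 1$ and almost periodicity. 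Integrating in $t\in[2^k\delta,2^{k+1}\delta]$ gives a factor of at most $2^k\delta$, so we end up with
\[
\|\cdots\|_{L^2_x}\lesssim 2^k\delta\,\langle \mu^2 2^k\delta\rangle^{-n}\,\mathcal{M}(\lambda/8)\cdot C(\phi).
\]

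Third, choosing $n$ large (say $n=2$) renders both $\sum_{k}$ and $\sum_{\mu\ge\lambda}$ absolutely convergent geometric series, and the combined sum picks up a negative power of $\lambda$ (or of $\lambda\delta^{1/2}$), which is $\le \eta$ for $\lambda$ large depending on $\phi$, $\eta$, and the already-fixed $\delta$. The analogous bounds for $P^{\pm}_\mu$ are identical, using parts (2)--(4) of Lemma \ref{lem:KE} in place of the plain heat kernel estimate. The main obstacle is the quintic piece $\Lambda_{5,2}=A_0^{(1)}\phi$ when the high-frequency input sits inside $A_\theta$ or inside the outer integral defining $A_0^{(1)}$: there the pointwise $L^{4/3}$ bound is too weak, and one must pass to the $L^1_tL^2_x$ branch of the $\mathcal{N}$-norm by repeating the argument of Lemma \ref{NLEstimate}, using the equivariant identity for $\Im(Q_{12}(\bar\phi,\phi))$ and \eqref{Holder2} to place the high-frequency factor in $L^\infty_tL^2_x$ and absorb it into $\mathcal{M}(\lambda/8)$. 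Once this bookkeeping is done, summation in $\mu$ and $k$ proceeds as above.
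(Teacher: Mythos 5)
Your overall strategy is the standard one and is what the paper implicitly intends (the paper omits the proof, deferring to the tail estimates of \cite{KiTaVi09, KiViZh08}): in the non-stationary region the kernel bounds of Lemma \ref{lem:KE} supply a factor $\langle \mu^2 2^k\delta\rangle^{-n}$ with $n$ arbitrary, crude bounds on $P_\mu\Lambda(\phi)$ suffice, the forced high-frequency input yields the factor $\mathcal{M}(\lambda/8)$, and the double sum over $k$ and $\mu\geq\lambda$ converges with a negative power of $\lambda^2\delta$ to spare. Two of your intermediate claims, however, are false as stated and need repair.

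First, the kernel $\mu^2\langle\mu|x|\rangle^{-1/2}\langle\mu|y|\rangle^{-1/2}$ is \emph{not} Schur-bounded: $\langle\mu|y|\rangle^{-1/2}$ (or its square) is not integrable over $\R^2$, and part (4) of Lemma \ref{lem:KE} only cuts off near the origin, not at infinity. You cannot discard the entire bracket $\langle\mu^2t+\mu|x|-\mu|y|\rangle^{-n}$ as a constant $\langle\mu^2 2^k\delta\rangle^{-n}$ and then integrate. The fix is to keep a few powers: on the support of $\tilde\chi_k$ one has $\mu^2t+\mu|x|-\mu|y|\gtrsim \mu^2t+\mu|x|$, so $\langle\cdots\rangle^{-n}\lesssim\langle\mu^2 2^k\delta\rangle^{-n/2}\langle\mu|x|\rangle^{-n/2}$, which together with the boundedness of the $y$-support $\{|y|\lesssim\mu t\}$ makes the Cauchy--Schwarz/Schur step legitimate (at the cost of harmless polynomial factors in $\mu$ and $2^k\delta$). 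Second, $\|P_\mu\Lambda(\phi)(t)\|_{L^2_y}$ need not be finite for an $L^2$ solution --- already $|\phi|^2\phi\in L^2_y$ would require $\phi\in L^6_y$. You must instead measure the nonlinearity in the spaces where it is actually controlled, $L^{4/3}_{t,y}+L^1_tL^2_y$ on the slab $[2^k\delta,2^{k+1}\delta]$, paying a Bernstein factor $\mu^{1/2}$ to return to $L^2_y$ where needed, and noting that $\|\phi\|_{L^4_{t,x}(J\times\R^2)}^4\lesssim 1+|J|$ on long slabs (from $N(t)\lesssim1$), so the resulting growth in $2^k\delta$ is polynomial and is absorbed by $\langle\mu^2 2^k\delta\rangle^{-n}$. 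With these two corrections your argument closes.
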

Together Propositions \ref{prop:ste}--\ref{prop:ltet} establish \eqref{sufflem}.

\section{Virial and Morawetz identities} \label{sec:moravir}

We recall
\[
F_{0r} = - \frac1r \Im(\bar{\phi} D_\theta \phi), \quad
F_{0 \theta} = r \Im(\bar{\phi} D_r \phi), \quad
F_{r \theta} = - \frac12 r |\phi|^2
\]
Because $dF = d^2 A = 0$, we have
\begin{equation} \label{dF}
\partial_t F_{r \theta} - \partial_r F_{0 \theta} + \partial_\theta F_{0r} = 0
\end{equation}
To rewrite this in terms of a natural stress-energy tensor, let
\[
T_{00} = \frac12 r |\phi|^2, \quad
T_{0 r} = r \Im(\bar{\phi} D_r \phi), \quad
T_{0 \theta} = \frac1r \Im(\bar{\phi} D_\theta \phi)
\]
Then \eqref{dF} may be rewritten as $\partial_\alpha T_{0 \alpha} = 0$.

\begin{lem}
We have
\begin{equation} \label{T0rt}
\begin{split}
\partial_t T_{0r}
=&
-(2 + 2r \partial_r) |D_r \phi|^2 + \frac12 r g \partial_r |\phi|^4 
\\&
+ \frac{1}{r} \partial_r |D_\theta \phi|^2 - \frac2r \partial_\theta \Re(\overline{D_\theta \phi} D_r \phi)
\\&
+ r \partial_r \left[ \frac{1}{r^2} \left( \frac12 \partial_\theta^2 |\phi|^2 - |D_\theta \phi |^2 \right) \right]
+ \left( \frac12 r \partial_r^3 + \frac12 \partial_r^2 - \frac{1}{2r} \partial_r \right) |\phi|^2
\end{split}
\end{equation}
\end{lem}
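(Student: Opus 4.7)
The plan is to compute $\partial_t T_{0r}$ directly from $T_{0r} = r\,\Im(\bar\phi\, D_r\phi)$ by converting every $\partial_t$ into the covariant derivative $D_t = \partial_t + iA_0$ and substituting the equation of motion. The essential tool is the gauge-covariant Leibniz rule: if $\psi$ transforms with unit charge like $\phi$, then $\partial_\alpha(\bar\phi\,\psi) = \overline{D_\alpha\phi}\,\psi + \bar\phi\,D_\alpha\psi$. Combined with $[D_t, D_r]\phi = iF_{0r}\phi$, this yields
\[
\partial_t T_{0r} = r\,\Im\!\bigl(\overline{D_t\phi}\,D_r\phi + \bar\phi\,D_rD_t\phi\bigr) + rF_{0r}|\phi|^2,
\]
and the $A_0$ pieces cancel automatically. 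Substituting $D_t\phi = i\Delta_A\phi + ig|\phi|^2\phi$ with $\Delta_A := D_r^2 + \tfrac{1}{r}D_r + \tfrac{1}{r^2}D_\theta^2$, the cubic term contributes $\tfrac{rg}{2}\partial_r|\phi|^4$ via $\Re(\bar\phi D_r\phi) = \tfrac12\partial_r|\phi|^2$, while the linear term reduces to $r\,\Re\!\bigl(\bar\phi D_r\Delta_A\phi - \overline{\Delta_A\phi}\,D_r\phi\bigr)$.

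For the radial part of $\Delta_A$ I apply the Leibniz rule twice in $r$, using $\Re(\bar\phi D_r^2\phi) = \tfrac12\partial_r^2|\phi|^2 - |D_r\phi|^2$ and the corresponding formula for $D_r^3$. After some bookkeeping the outcome, multiplied by $r$, is exactly $\bigl(\tfrac{r}{2}\partial_r^3 + \tfrac12\partial_r^2 - \tfrac{1}{2r}\partial_r\bigr)|\phi|^2 - (2 + 2r\partial_r)|D_r\phi|^2$, which is the first and last groups in the claim. For the angular part $\tfrac{1}{r^2}D_\theta^2$ I use $D_rD_\theta^2 = D_\theta^2 D_r + iD_\theta(F_{r\theta}\,\cdot\,) + iF_{r\theta}D_\theta$ to pull $D_r$ past $D_\theta^2$. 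The leading piece $\Re\!\bigl(\bar\phi D_\theta^2 D_r\phi - \overline{D_\theta^2\phi}\,D_r\phi\bigr)$ telescopes under the $\theta$-Leibniz rule and a further $[D_r, D_\theta]$ commutation into $\partial_\theta\!\bigl[\partial_r\Re(\bar\phi D_\theta\phi) - 2\Re(\overline{D_\theta\phi}\,D_r\phi)\bigr]$, which, together with the $\partial_r(r^{-2})$ correction and $\Re(\bar\phi D_\theta^2\phi) = \tfrac12\partial_\theta^2|\phi|^2 - |D_\theta\phi|^2$, assembles into the mixed terms $\tfrac{1}{r}\partial_r|D_\theta\phi|^2 + r\partial_r\!\bigl[\tfrac{1}{r^2}(\tfrac12\partial_\theta^2|\phi|^2 - |D_\theta\phi|^2)\bigr] - \tfrac{2}{r}\partial_\theta\Re(\overline{D_\theta\phi}\,D_r\phi)$.

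The main obstacle, and the delicate part of the argument, is ensuring that the curvature correction $-\tfrac{2F_{r\theta}}{r}\Im(\bar\phi D_\theta\phi)$ generated by the $[D_r, D_\theta]$ commutator exactly cancels the leftover $rF_{0r}|\phi|^2$ term carried along from the very first step. Upon substituting $F_{r\theta} = -\tfrac12 r|\phi|^2$ and $F_{0r} = -\tfrac{1}{r}\Im(\bar\phi D_\theta\phi)$, both terms become $\pm|\phi|^2\Im(\bar\phi D_\theta\phi)$ with opposite signs, and so cancel in sum. After this cancellation no other pieces remain unaccounted for, and the stated identity follows.
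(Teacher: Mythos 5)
Your proposal is correct and follows essentially the same route as the paper: both proceed by direct computation of $\partial_t T_{0r}$ via the covariant Leibniz rule and the commutators $[D_t,D_r]\phi=iF_{0r}\phi$, $[D_r,D_\theta]\phi=iF_{r\theta}\phi$, substitute the evolution equation \eqref{phievo}, and close the argument with the cancellation of the curvature terms (your $rF_{0r}|\phi|^2$ against $-\tfrac{2F_{r\theta}}{r}\Im(\bar\phi D_\theta\phi)$ is exactly the paper's $2F_{\theta r}F_{0r}$ bookkeeping). The only difference is organizational—you antisymmetrize the linear term before expanding, while the paper computes $r\Im(\overline{D_t\phi}D_r\phi)$ and $r\Im(\bar\phi D_rD_t\phi)$ separately—and all stated coefficients check out.
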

\begin{proof}
We write
\[
\begin{split}
\partial_t T_{0r} &=
r \Im(\overline{D_t \phi} D_r \phi) + r \Im(\bar{\phi} D_t D_r \phi) 
\\
&=
r \Im(\overline{D_t \phi} D_r \phi) + r \Im(\bar{\phi} D_r D_t \phi)  + r F_{0r} |\phi|^2
\\
&=
r \Im(\overline{D_t \phi} D_r \phi) + r \Im(\bar{\phi} D_r D_t \phi)  + 2 F_{\theta r} F_{0r}
\end{split}
\]
and calculate each piece separately, using \eqref{phievo}.

For the first term, we get
\[
\begin{split}
r \Im(\overline{D_t \phi} D_r \phi)
&=
-r \Re(\overline{D_r^2 \phi} D_r \phi) - |D_r \phi|^2 - \frac{1}{r} \Re(\overline{D_\theta^2 \phi} D_r \phi)
- r g |\phi|^2 \Re(\bar{\phi} D_r \phi)
\\
&=
-(1 + \frac12 r \partial_r) |D_r \phi|^2 - \frac14 g r \partial_r |\phi|^4
- \frac{1}{r} \Re(\overline{D_\theta^2 \phi} D_r \phi)
\end{split}
\]
Now
\[
\begin{split}
\Re(\overline{D_\theta^2 \phi} D_r \phi)
&=
\partial_\theta \Re(\overline{D_\theta \phi} D_r \phi)
- \Re(\overline{D_\theta \phi} D_r D_\theta \phi) - \Re(\overline{D_\theta \phi} i F_{\theta r} \phi)
\\
&=
\partial_\theta \Re(\overline{D_\theta \phi} D_r \phi)
- F_{\theta r} \Im(\bar{\phi} D_\theta \phi) - \frac12 \partial_r |D_\theta \phi|^2
\end{split}
\]
and so we can rewrite the first term as
\[
r \Im(\overline{D_t \phi} D_r \phi)
=
-(1 + \frac12 r \partial_r) |D_r \phi|^2 - \frac14 g r \partial_r |\phi|^4
- \frac1r \partial_\theta \Re(\overline{D_\theta \phi} D_r \phi)
- F_{\theta r} F_{0r}
+ \frac{1}{2r} \partial_r |D_\theta \phi|^2
\]

For the second term, we get
\[
\begin{split}
r \Im(\bar{\phi} D_r D_t \phi)
&=
r \Re(\bar{\phi} D_r^3 \phi) + r \Re(\bar{\phi} D_r (\frac1r D_r \phi))
+ r \Re(\bar{\phi} D_r (\frac{1}{r^2} D_\theta^2 \phi) ) + r g \Re(\bar{\phi} D_r (|\phi|^2 \phi))
\end{split}
\]
Now
\[
\begin{split}
r \Re(\bar{\phi} D_r^3 \phi) 
&= 
\frac12 r \partial_r^3 |\phi|^2
- \frac32 r \partial_r |D_r \phi|^2
\\
r \Re(\bar{\phi} D_r (\frac1r D_r \phi))
&=
- |D_r \phi|^2
+ \left( \frac12 \partial_r^2 - \frac{1}{2r} \partial_r \right) |\phi|^2
\\
r \Re(\bar{\phi} D_r (\frac{1}{r^2} D_\theta^2 \phi) )
&=
r \partial_r \left[ \frac{1}{r^2} \left( \frac12 \partial_\theta^2 |\phi|^2 - |D_\theta \phi |^2 \right) \right]
- \frac1r \Re(\overline{D_r \phi} D_\theta^2 \phi)
\\
r g \Re(\bar{\phi} D_r (|\phi|^2 \phi))
&=
\frac34 r g \partial_r |\phi|^4
\end{split}
\]
Hence
\[
\begin{split}
r \Im(\bar{\phi} D_r D_t \phi)
=&
- \left(1 + \frac32 r \partial_r \right) |D_r \phi|^2
+ \left( \frac12 r \partial_r^3 + \frac{1}{2} \partial_r^2 - \frac{1}{2r} \partial_r \right) |\phi|^2
\\
&
+ \frac{1}{2r} \partial_r |D_\theta \phi|^2
+ r \partial_r \left[ \frac{1}{r^2} \left( \frac12 \partial_\theta^2 |\phi|^2 - |D_\theta \phi |^2 \right) \right]- \frac1r \partial_\theta \Re(\overline{D_\theta \phi} D_r \phi)
\\
&- F_{\theta r} F_{0r} 
+ \frac34 r g \partial_r |\phi|^4
\end{split}
\]
Combining the above pieces yields \eqref{T0rt}.
\end{proof}

\begin{lem}[Virial and Morawetz identities]
A direct calculation relying upon integration by parts verifies the virial identity
\begin{equation} \label{virial}
\partial_t^2  \iint
r^2 T_{00} dr d\theta
= 4 \iint
\left( |D_r \phi|^2 + \frac{1}{r^2} |D_\theta \phi|^2 - \frac{g}{2} |\phi|^4 \right) r dr d\theta
\end{equation}
and the Morawetz identity
\begin{equation} \label{Morawetz}
\partial_t^2 \iint
r T_{00} dr d\theta
=
\frac12 \iint
\left( \frac{1}{r^2} |\phi|^2 - g |\phi|^4 \right) dr d\theta
\end{equation} 
\end{lem}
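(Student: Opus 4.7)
The plan is to begin from the divergence-free relation $\partial_t T_{00} + \partial_r T_{0r} + \partial_\theta T_{0\theta} = 0$ already noted in the text (as a restatement of $dF = d^2 A = 0$). Pairing this against the weight $r^k$ with $k=2$ for the virial and $k=1$ for Morawetz, integrating over $(r, \theta) \in (0, \infty) \times [0, 2\pi)$, dropping the $\partial_\theta T_{0\theta}$ contribution by periodicity, and integrating by parts in $r$ yields
\[
\partial_t \iint r^k T_{00}\, dr\, d\theta = k \iint r^{k-1} T_{0r}\, dr\, d\theta.
\]
Boundary terms at $r = \infty$ vanish by $L^2$-decay of $\phi$, while those at $r = 0$ are controlled by the equivariance ansatz (in particular $|\phi|^2 = O(r^{2|m|})$ at the origin, and the corresponding regularity of $A_\theta$ and $D_r\phi$ there, cf.~Lemma \ref{lem:Atheta}). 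Differentiating once more in $t$, bringing the derivative inside the time-independent integral, and substituting the pointwise identity \eqref{T0rt} for $\partial_t T_{0r}$ reduces both identities to an $r$-integration of the six summands of \eqref{T0rt} against the weight $k r^{k-1}$; the $\partial_\theta$ pieces there vanish upon $\theta$-integration.

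For the virial ($k=2$) I multiply \eqref{T0rt} by $2r$ term by term. The piece $-(2 + 2r\partial_r)|D_r\phi|^2$ assembles, after one IBP on the $r\partial_r$ term, into $4r|D_r\phi|^2$; the piece $\tfrac12 rg\,\partial_r|\phi|^4$ produces $-2gr|\phi|^4$; the bracket $r\partial_r[r^{-2}(\tfrac12\partial_\theta^2 |\phi|^2 - |D_\theta\phi|^2)]$, after one IBP and after discarding the periodic $\partial_\theta^2 |\phi|^2$ piece, produces $\tfrac{4}{r}|D_\theta\phi|^2$; and the third-order polynomial $(\tfrac12 r\partial_r^3 + \tfrac12\partial_r^2 - \tfrac{1}{2r}\partial_r)|\phi|^2$ is conveniently an exact $r$-derivative, namely $\tfrac12\partial_r[(r^2\partial_r^2 - r\partial_r)|\phi|^2]$, so it contributes only a vanishing boundary term. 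Summing reproduces the right-hand side of \eqref{virial}.

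For the Morawetz identity ($k=1$) the same philosophy applies without the extra $r$-factor, but the bookkeeping is more delicate. The $-(2 + 2r\partial_r)|D_r\phi|^2$ summand cancels itself after one IBP; the $\tfrac12 rg\,\partial_r|\phi|^4$ piece produces $-\tfrac{g}{2}|\phi|^4$; and the two $|D_\theta\phi|^2$-related pieces, namely $r^{-1}\partial_r|D_\theta\phi|^2$ and $r\partial_r[r^{-2}(\cdots)]$, after IBP and dropping the $\partial_\theta^2$ piece by periodicity, combine into $2 r^{-2}|D_\theta\phi|^2$. The cubic-in-$\partial_r$ polynomial applied to $|\phi|^2$ collapses, after iterated IBPs, into $-\tfrac12 r^{-2}|\phi|^2$ plus boundary terms. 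Finally, the equivariant relation $|D_\theta\phi|^2 = (m + A_\theta)^2|\phi|^2$, combined with the curvature constraint $\partial_r A_\theta = -\tfrac12 r|\phi|^2$, allows one to rewrite $2r^{-2}|D_\theta\phi|^2$ and combine it with the $-\tfrac12 r^{-2}|\phi|^2$ contribution (after one more IBP against $\partial_r(m + A_\theta)$) into $\tfrac12 r^{-2}|\phi|^2$, matching \eqref{Morawetz}.

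The main obstacle is twofold. First, one must verify that every boundary contribution at $r = 0$---involving $1/r$ and $1/r^2$ weights---genuinely vanishes; this rests on the equivariant vanishing of $\phi$ and of $A_\theta$ at the origin combined with the $L^\infty$ control of $A_\theta$ from Lemma \ref{lem:Atheta}. Second, the Morawetz calculation requires an algebraic collapse in which several IBPs must be orchestrated with the curvature constraint to reduce the $|D_\theta\phi|^2/r^2$ contribution, via $(m+A_\theta)^2$, to the compact $|\phi|^2/r^2$ form. In contrast, the virial calculation, thanks to the fortuitous total-derivative structure of the third-order $\partial_r$ polynomial and to the extra $r$-weight killing the most singular pieces, proceeds with much less friction.
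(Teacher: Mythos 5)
Your overall strategy is exactly the paper's: the proof in the text consists precisely of writing $\partial_t^2\iint r^k T_{00}\,dr\,d\theta$ via the conservation law $\partial_\alpha T_{0\alpha}=0$, integrating by parts in $r$ to land on $k\iint r^{k-1}\partial_t T_{0r}\,dr\,d\theta$, and then substituting \eqref{T0rt}. Your virial computation is correct: multiplying \eqref{T0rt} by $2r$, the first term gives $4r|D_r\phi|^2$, the $\tfrac12 rg\partial_r|\phi|^4$ term gives $-2gr|\phi|^4$, the bracket gives $\tfrac4r|D_\theta\phi|^2$, the third-order polynomial is a total derivative (note it is $2r$ times the polynomial that equals $\partial_r[(r^2\partial_r^2-r\partial_r)|\phi|^2]$, not half of it, but this is immaterial), and the one term you do not list, $2r\cdot\tfrac1r\partial_r|D_\theta\phi|^2=2\partial_r|D_\theta\phi|^2$, is also a pure boundary term. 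So \eqref{virial} is established.

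The Morawetz derivation, however, has a genuine gap at its final step. Your own bookkeeping, carried out honestly, terminates at
\[
\iint\Bigl[\tfrac{2}{r^2}|D_\theta\phi|^2-\tfrac{1}{2r^2}|\phi|^2-\tfrac{g}{2}|\phi|^4\Bigr]\,dr\,d\theta ,
\]
which is the expected $2$-d analogue of the classical $|x|$-weighted Morawetz structure. To reach the stated right-hand side $\iint[\tfrac{1}{2r^2}|\phi|^2-\tfrac{g}{2}|\phi|^4]$ you would need the pointwise identity $2(m+A_\theta)^2|\phi|^2=|\phi|^2$, which is false. The manipulation you propose does not rescue this: substituting $|\phi|^2=-\tfrac2r\partial_r(m+A_\theta)$ into $\int 2r^{-2}(m+A_\theta)^2|\phi|^2\,dr$ gives $-\tfrac43\int r^{-3}\partial_r(m+A_\theta)^3\,dr$, and integrating by parts produces $-4\int r^{-4}(m+A_\theta)^3\,dr$ together with a boundary term $-\tfrac43[(m+A_\theta)^3r^{-3}]$ that actually diverges at $r=0$ when $m\neq0$; nothing here collapses to $\int\tfrac{1}{2r^2}|\phi|^2\,dr$. (Relatedly, your blanket assertion that all origin boundary terms vanish because $|\phi|^2=O(r^{2|m|})$ is empty when $m=0$: there $|\phi(0)|^2$ need not vanish and the $[|\phi|^2/(2r)]$ boundary term from the $-\tfrac{1}{2r}\partial_r|\phi|^2$ piece is genuinely singular.) The correct conclusion of your computation is the displayed expression above, not \eqref{Morawetz} as written; you should either stop there and record that identity, or supply the additional (currently missing) argument that bridges the two.
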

\begin{proof}
To prove \eqref{virial}, start with
\[
\partial_t^2 \iint r^2 T_{00} dr d\theta
=
- \partial_t \iint r^2
\left( \partial_r T_{0r} + \partial_\theta T_{0 \theta} \right) dr d\theta
=
2 \iint r \partial_t T_{0r} dr d\theta
\]
Then invoke \eqref{T0rt} to conclude
\[
\iint r \partial_t T_{0r} dr d\theta
=
2 \iint
\left( |D_r \phi|^2 + \frac{1}{r^2} |D_\theta \phi|^2 - \frac{g}{2} |\phi|^4 \right) r dr d\theta
=
4 E(\phi)
\]
To obtain \eqref{Morawetz}, write
\[
\partial_t^2 \iint r 
T_{00} dr d\theta
=
\iint 
\partial_t T_{0r} dr d\theta
\]
and then use \eqref{T0rt}.
\end{proof}

\begin{rem}
Under the equivariant ansatz, the components of the stress-energy tensor are radial, so that,
in particular the integrands of \eqref{virial} and \eqref{Morawetz} are independent of $\theta$.
Under this ansatz, the identity \eqref{T0rt} admits the simplification
\begin{equation} \label{Tortsimp}
\partial_t T_{0r}
=
-(2 + 2r \partial_r) |D_r \phi|^2 + \frac12 r g \partial_r |\phi|^4 
+ \frac{1}{r} \partial_r |D_\theta \phi|^2
- r \partial_r \left( \frac{1}{r^2} |D_\theta \phi|^2 \right)
+ \left( \frac12 r \partial_r^3 + \frac12 \partial_r^2 - \frac{1}{2r} \partial_r \right) |\phi|^2
\end{equation}
\end{rem}

\section{Absence of almost periodic solutions} \label{sec:noaps}

\begin{prop} 
Let $m \in \Z$ and let
$\phi \in H^1_m$ be a nontrivial solution of \eqref{equiCSS} with $g < 1$. Then $E(\phi) > 0$.
\end{prop}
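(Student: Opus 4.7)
My plan is to rewrite $E(\phi)$ via a Bogomol'nyi-type identity so that for $g < 1$ it becomes a sum of two manifestly nonnegative pieces, one of which is strictly positive whenever $\phi$ is nontrivial. Introducing the ``self-dual'' covariant derivative $D_+ := D_1 + i D_2$, a direct expansion gives
\begin{equation*}
|D_+ \phi|^2 = |D_1 \phi|^2 + |D_2 \phi|^2 - 2 \Im(\overline{D_1 \phi}\, D_2 \phi).
\end{equation*}

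The first step is to integrate the cross term and obtain
\begin{equation*}
\int_{\R^2} |D_x \phi|^2\, dx
= \int_{\R^2} |D_+ \phi|^2\, dx - \int_{\R^2} F_{12}\, |\phi|^2\, dx.
\end{equation*}
To derive this, expand $D_j = \partial_j + i A_j$. The pure-derivative contribution is, up to a constant, the null form $Q_{12}(\bar\phi, \phi) = \partial_1(\bar\phi \partial_2 \phi) - \partial_2(\bar\phi \partial_1 \phi)$, an exact divergence that integrates to zero (for $\phi \in H^1$, by Schwartz approximation). The remaining $A$-$\phi$ cross terms, after one integration by parts against $\partial_j |\phi|^2$, collapse to $-\int F_{12} |\phi|^2\, dx$.

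The second step is to invoke the CSS curvature constraint $F_{12} = -\tfrac12 |\phi|^2$ from \eqref{equiCSS} and substitute into \eqref{energy}. This yields the Bogomol'nyi representation
\begin{equation*}
E(\phi) = \frac12 \int_{\R^2} |D_+ \phi|^2\, dx + \frac{1 - g}{4} \int_{\R^2} |\phi|^4\, dx.
\end{equation*}
Since $g < 1$, both terms are nonnegative; since $\phi \in H^1(\R^2) \hookrightarrow L^4(\R^2)$ is nontrivial, the second term is strictly positive, and consequently $E(\phi) > 0$.

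The main technical issue is justifying the integration by parts and the vanishing of the $Q_{12}$-integral at $H^1$ regularity rather than for Schwartz $\phi$. Here one uses that the auxiliary gauge potentials in the equivariant Coulomb gauge associated to $\phi$ are controlled in appropriate $L^p$ spaces by Lemmas~\ref{lem:Atheta}--\ref{lem:A0}, so that the computations pass to the limit under approximation by smooth compactly-supported equivariant data. One might alternatively carry out the same derivation directly in polar coordinates using the pair $D_r \pm \tfrac{i}{r} D_\theta$ acting on the equivariant ansatz, trading the Cartesian null-form estimate for a boundary-term analysis on $(0, \infty)$; in either formulation the only nontrivial issue is this regularity/approximation step.
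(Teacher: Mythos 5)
Your proposal is correct and follows essentially the same route as the paper: the Bogomol'nyi identity $|D_x\phi|^2 = |D_+\phi|^2 + \nabla\times J - F_{12}|\phi|^2$ integrated over $\R^2$ (the curl/null-form term vanishing), followed by substitution of the constraint $F_{12} = -\tfrac12|\phi|^2$, yielding exactly the paper's representation $E(\phi) = \tfrac12\int\bigl[|D_+\phi|^2 + \tfrac12(1-g)|\phi|^4\bigr]dx$. The only difference is presentational: you derive the integrated identity directly by expanding the cross term and integrating by parts, while the paper states the pointwise identity and applies Green's theorem.
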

\begin{proof}
The main tool required is the so-called Bogomol'nyi identity, which states
\begin{equation} \label{Bogo}
|D_x \phi|^2 = |D_+ \phi|^2 + \nabla \times J - F_{12} |\phi|^2
\end{equation}
where $D_{\pm} := D_1 \pm i D_2$ and $J = (J_1, J_2)$ with $J_k := \Im(\bar{\phi} D_k \phi)$.
This identity can be motivated by the factorization
\[
D_j D_j \phi = (D_1 - i D_2)(D_1 + i D_2) \phi + F_{12} \phi
\]
and both can be verified by direct calculation.
Using \eqref{Bogo} and Green's theorem, we obtain
\begin{equation} \label{enerbogo}
E(\phi) := \frac12 \int_{\R^2} \left[ |D_x \phi|^2 - \frac{g}{2} |\phi|^4 \right] dx
=
\frac12 \int_{\R^2} \left[ |D_+ \phi|^2 + \frac12 (1 - g) |\phi|^4 \right] dx
\end{equation}
From this we conclude that if $g < 1$ and $\phi$ is not zero a.e., then
$E(\phi) > 0$.
\end{proof}

\subsection{Ruling out the self-similar scenario}
As a corollary of \eqref{ExtraRsscon}, used to prove Theorem \ref{ExtraRss}, 
we obtain the following.
\begin{cor}
Let $g < 1$. Critical equivariant self-similar solutions do not exist.
\end{cor}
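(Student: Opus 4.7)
The plan is to derive a contradiction between the energy decay forced by self-similarity together with the extra regularity of Theorem~\ref{ExtraRss}, and the strict positivity of the energy supplied by the Bogomol'nyi identity when $g<1$. The key input is the quantitative bound \eqref{ExtraRsscon},
\[
\sup_{T>0}\,\|P_{>AT^{-1/2}}\phi(T)\|_{L^2_x}\lesssim_\phi A^{-s}\qquad\text{for every }s>0,
\]
which is equivalent to the Littlewood-Paley bound $\|P_N\phi(T)\|_{L^2_x}\lesssim_\phi (NT^{1/2})^{-s}$ for $N\gtrsim T^{-1/2}$.

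First I would upgrade this into pointwise-in-time decay of the energy. A dyadic decomposition $\|\phi(T)\|_{\dot H^1}^2=\sum_N N^2\|P_N\phi(T)\|_{L^2}^2$ split at the self-similar scale $N_0=T^{-1/2}$, using mass conservation for the piece $N\leq N_0$ and \eqref{ExtraRsscon} with $s$ sufficiently large for $N>N_0$, yields $\|\phi(T)\|_{\dot H^1}\lesssim_\phi T^{-1/2}$. The 2-d Gagliardo-Nirenberg inequality $\|f\|_{L^4}^4\lesssim\|f\|_{L^2}^2\|\nabla f\|_{L^2}^2$ then gives $\|\phi(T)\|_{L^4}^4\lesssim_\phi T^{-1}$. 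Since the equivariance ansatz forces $A_r\equiv 0$, the magnetic contribution to $|D_x\phi|^2$ reduces to $r^{-2}A_\theta^2|\phi|^2$, which by Lemma~\ref{lem:Atheta} is bounded in $L^1_x$ by $\|r^{-1}A_\theta\|_{L^\infty}^2\|\phi\|_{L^2}^2\lesssim_\phi T^{-1}$. Altogether
\[
|E(\phi)(T)|\lesssim_\phi T^{-1}\longrightarrow 0\qquad\text{as }T\to+\infty.
\]

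Next I would invoke conservation of energy on the lifespan $I=(0,+\infty)$. Since Theorem~\ref{ExtraRss} places $\phi(T)$ in $H^s_m$ for every $s\geq 0$ at each time $T\in I$, the standard Hamiltonian computation is rigorously valid on every compact subinterval $[T_1,T_2]\subset I$; letting $T_2\to+\infty$ and combining with the decay just established forces $E(\phi)\equiv 0$. On the other hand, the proposition immediately preceding this corollary, via the Bogomol'nyi identity~\eqref{Bogo} and its consequence~\eqref{enerbogo}, asserts $E(\phi)(T)>0$ at every time $T$ with $\phi(T)\not\equiv 0$. Hence $\phi\equiv 0$, contradicting the fact that the critical self-similar element blows up both forward and backward in time.

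The main obstacle is the justification of energy conservation on the full open lifespan $(0,+\infty)$: although $\phi(T)\in H^s_m$ at each individual time, the $\dot H^1$ norm is only locally bounded and diverges as $T\to 0^+$, so one cannot apply standard conservation directly on $I$. This is handled by propagating $E$ on compact subintervals bounded away from the origin, where the extra regularity supplied by Theorem~\ref{ExtraRss} makes the computation unambiguous, and only then passing to the limit at the future endpoint $T_2\to+\infty$ to extract $E\equiv 0$.
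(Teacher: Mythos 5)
Your proposal is correct and follows essentially the same route as the paper: use \eqref{ExtraRsscon} to get $\|\phi(t)\|_{\dot H^1}\lesssim t^{-1/2}$, conclude from energy conservation that $E(\phi)=0$, and contradict the strict positivity of the energy furnished by the Bogomol'nyi identity for $g<1$. The only cosmetic difference is that you rederive the bound $|E(\phi)|\lesssim_{\chg(\phi)}\|\phi\|_{\dot H^1}$ inline via Gagliardo--Nirenberg and the $A_\theta$ estimates, whereas the paper packages exactly this as the lemma yielding \eqref{energy-H1}.
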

\begin{proof}
For any $s \geq 0$,
\[
\sup_{t \in (0, \infty)} \int_{|\xi| > A t^{-1/2}} |\hat{\phi}(\xi, t)|^2 d \xi
\leq
C_s A^{-s},
\quad
A > A_0(s)
\]
Therefore
\begin{equation} \label{dotH}
\| \phi(t, \cdot) \|_{\dot{H}^s(\R^2)} \lesssim t^{- \frac{s}{2}} = [N(t)]^s
\end{equation}
Thanks to the following lemma, taking $t \to \infty$ in \eqref{dotH} 
implies that the conserved energy $E(\phi)$ must be zero and hence the solution
$\phi$ trivial.
\end{proof}

\begin{lem}
Let $m \in \Z$ and
let $\phi \in L^\infty_t L^2_m$ be a solution of \eqref{equiCSS}
with $E(\phi)$, given by \eqref{energy}, finite.
Then
\begin{equation} \label{energy-H1}
|E(\phi)| \lesssim \| \phi \|_{\dot{H}^1}
\end{equation}
where the constant depends upon $g$ and the charge $\chg(\phi)$.
\end{lem}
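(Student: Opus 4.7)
The plan is to expand $E(\phi)$ under the equivariant ansatz, isolate a clean $\|\phi\|_{\dot H^1}^2$ contribution, and absorb every other piece (all of which are quartic in $\phi$) into $\|\phi\|_{\dot H^1}^2$ using the two-dimensional Gagliardo--Nirenberg inequality
\[
\|\phi\|_{L^4}^4 \lesssim \|\phi\|_{L^2}^2 \|\phi\|_{\dot H^1}^2 = \chg(\phi) \|\phi\|_{\dot H^1}^2,
\]
at the cost of powers of the charge.

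First I would expand the kinetic term. Under \eqref{equian} we have $A_r = 0$ and $D_\theta \phi = i(m + A_\theta)\phi$, so the polar form of the energy \eqref{polarenergy} gives
\[
E(\phi) = \tfrac{1}{2}\int |\nabla \phi|^2 dx + m\int \frac{A_\theta}{r^2}|\phi|^2 dx + \tfrac{1}{2}\int \frac{A_\theta^2}{r^2}|\phi|^2 dx - \tfrac{g}{4}\int |\phi|^4 dx,
\]
where I used the equivariant identity $|\nabla \phi|^2 = |\partial_r \phi|^2 + m^2 r^{-2}|\phi|^2$. The first term is exactly $\tfrac{1}{2}\|\phi\|_{\dot H^1}^2$, so it suffices to bound each of the other three integrals by a constant (depending on $m$, $g$, and $\chg(\phi)$) times $\|\phi\|_{\dot H^1}^2$.

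Next I would estimate the three non-gradient terms using Lemma~\ref{lem:Atheta}. The cross term is handled by Cauchy--Schwarz and \eqref{Athet3} at $p = 2$: $|\int (A_\theta/r^2)|\phi|^2 dx| \le \|A_\theta/r^2\|_{L^2}\|\phi\|_{L^4}^2 \lesssim \|\phi\|_{L^4}^4$. For the pure $A_\theta^2$ piece, push one copy of $A_\theta$ into $L^\infty$ via \eqref{Athet1}, trading it for a factor of $\chg(\phi)$, and handle what remains as above:
\[
\int \frac{A_\theta^2}{r^2}|\phi|^2 dx \le \|A_\theta\|_{L^\infty}\,\bigl\|\tfrac{A_\theta}{r^2}\bigr\|_{L^2}\|\phi\|_{L^4}^2 \lesssim \chg(\phi)\|\phi\|_{L^4}^4.
\]
The quartic self-interaction $\int |\phi|^4 dx$ is directly Gagliardo--Nirenberg. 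Converting every $\|\phi\|_{L^4}^4$ into $\chg(\phi)\|\phi\|_{\dot H^1}^2$ yields
\[
|E(\phi)| \lesssim \bigl(1 + |m|\chg(\phi) + \chg(\phi)^2 + |g|\chg(\phi)\bigr)\|\phi\|_{\dot H^1}^2,
\]
which is stronger than the stated linear bound on any set where $\|\phi\|_{\dot H^1}$ is a priori bounded (the case in the application \eqref{dotH}).

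There is no real obstacle. The only point worth verifying is that the pointwise manipulations are legitimate: $E(\phi) < \infty$ together with $\phi \in L^\infty_t L^2_m$ and the ansatz make $A_\theta$ well-defined and place it in the function spaces required by Lemma~\ref{lem:Atheta} for a.e.\ $t$, so the expansion and each estimate go through term by term.
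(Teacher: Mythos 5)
Your proof is correct and follows essentially the same route as the paper: expand $|D_x\phi|^2$ into the free gradient plus $A_\theta$-corrections, control those corrections via the pointwise/integral bounds of Lemma~\ref{lem:Atheta}, and convert the resulting $\|\phi\|_{L^4}^4$ factors into $\chg(\phi)\|\phi\|_{\dot H^1}^2$ by Gagliardo--Nirenberg. Your observation that the argument actually yields the quadratic bound $|E(\phi)|\lesssim\|\phi\|_{\dot H^1}^2$ (which suffices for the application in \eqref{dotH}) is accurate and matches what the paper's proof delivers as well.
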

\begin{proof}
First we note that
\[
|D_x \phi|^2 \lesssim |\nabla \phi|^2 + |A_x \phi|^2
\]
To control $A_x \phi$ in $L^2$, use $|A_j| = \frac1r |A_\theta| \lesssim \| \phi \|_{L^4}^2$,
where the last inequality follows from \eqref{Athet2}.
Therefore
\[
\| D_x \phi \|_{L^2}^2 \lesssim \| \nabla \phi \|_{L^2}^2 + \| \phi \|_{L^4}^2 \| \phi \|_{L^2}^2
\]
The lemma now follows from the Gagliardo-Nirenberg inequality
\[
\| f \|_{L^4}^4 \lesssim \| \nabla f \|_{L^2}^2 \| f \|_{L^2}^2
\]
\end{proof}

\subsection{Ruling out global almost periodic solutions}

Let $\chi : \R_+ \to [0, 1]$ be a smooth cut-off function equal to one on $[0, 1]$ and zero on $[2, \infty)$.
For any given $R > 0$, define $\chi_R(r) := \chi(r / R)$. Set
\[
I_R(\phi) := \int_0^\infty T_{0r} \chi_R r dr
\]

\begin{lem}[Localized virial identity]
Let $m \in \Z$ and $\phi \in L^\infty H^1_m$. Then
\begin{equation} \label{viriallocal}
\begin{split}
\frac{d}{dt} I_R(\phi) =& \; 4 E(\phi) \\
&+ 2 \int_0^\infty \left( |D_r \phi|^2 + \frac{1}{r^2} |D_\theta \phi|^2 - \frac{g}{2} |\phi|^4 \right) (\chi_R - 1) r dr \\
&+ 2 \int_0^\infty \left( |D_r \phi|^2 - \frac{5}{4} \frac{|\phi|^2}{r^2} - \frac{g}{4} | \phi |^4 \right) r \chi_R^\prime r dr \\
&- \frac72\int_0^\infty \frac{|\phi|^2}{r^2} r^2 \chi_R^{\prime \prime} rdr
- \frac12 \int_0^\infty \frac{|\phi|^2}{r^2}  r^3 \chi_R^{\prime \prime \prime} r dr
\end{split}
\end{equation}
\end{lem}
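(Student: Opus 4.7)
The plan is to compute $\frac{d}{dt} I_R$ by substituting the equivariant formula \eqref{Tortsimp} for $\partial_t T_{0r}$ and integrating by parts in $r$ to route every derivative off the quadratic densities $|D_r\phi|^2$, $|D_\theta\phi|^2$, $|\phi|^4$, $|\phi|^2$ and onto the cutoff weight $r\chi_R$. Since $\chi_R$ is time-independent, one has $\frac{d}{dt} I_R = \int_0^\infty \chi_R\, r\,\partial_t T_{0r}\, dr$, and each of the five summands of \eqref{Tortsimp} is treated in turn.

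The only nonroutine algebraic observation is that the pair of $|D_\theta\phi|^2$-summands in \eqref{Tortsimp} collapses without any integration by parts: a direct computation gives
\[
\frac{1}{r}\partial_r |D_\theta\phi|^2 - r\partial_r\bigl(r^{-2}|D_\theta\phi|^2\bigr) = \frac{2|D_\theta\phi|^2}{r^2},
\]
so this block contributes the clean bulk integral $2\int (|D_\theta\phi|^2/r^2)\, r\chi_R\, dr$ with no $\chi_R'$-contamination. The $-(2+2r\partial_r)|D_r\phi|^2$ block requires one IBP on $-2r^2\chi_R\partial_r|D_r\phi|^2$ and produces $2r\chi_R|D_r\phi|^2 + 2r^2\chi_R'|D_r\phi|^2$. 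The nonlinearity $\tfrac{g}{2} r\partial_r|\phi|^4$ requires one IBP against $r^2\chi_R$ and produces $-gr\chi_R|\phi|^4 - \tfrac{g}{2}r^2\chi_R'|\phi|^4$. The cubic $|\phi|^2$-block $(\tfrac12 r\partial_r^3+\tfrac12\partial_r^2-\tfrac{1}{2r}\partial_r)|\phi|^2$ is handled by three, two, and one IBPs respectively, leaving a linear combination of $\int \chi_R'|\phi|^2\,dr$, $\int r\chi_R''|\phi|^2\,dr$, and $\int r^2\chi_R'''|\phi|^2\,dr$ with no surviving $\chi_R$-undifferentiated $|\phi|^2$ contribution.

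Collecting all $\chi_R$-undifferentiated terms produces the bulk expression $2\int \chi_R(|D_r\phi|^2 + |D_\theta\phi|^2/r^2 - \tfrac{g}{2}|\phi|^4)\, r\,dr$; writing $\chi_R = 1 + (\chi_R-1)$ and identifying the $\chi_R = 1$ part with $4E(\phi)$ via the unlocalized virial identity \eqref{virial} produces the leading $4E(\phi)$ together with the first correction integral, while repackaging each remaining term in the normalized form $\frac{|\phi|^2}{r^2}\cdot r^k\chi_R^{(k)}\cdot r\,dr$ matches the displayed $r\chi_R'$, $r^2\chi_R''$, and $r^3\chi_R'''$ integrals. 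The main obstacle is the bookkeeping of numerical coefficients from the cubic $|\phi|^2$-block; these must be tracked carefully so that the combined $|\phi|^2/r^2$-coefficients in the $\chi_R'$ and $\chi_R''$ integrals come out to $-\tfrac{5}{4}$ and $-\tfrac{7}{2}$ respectively. Boundary contributions at $r=\infty$ are killed by the compact support of $\chi_R$, and at $r=0$ they vanish because $m$-equivariant $H^1_m$-profiles obey $|\phi(r)|\lesssim r^{|m|}$ near the origin; the smoothness needed to execute the formal computation rigorously on the critical almost-periodic solutions of interest is supplied by the extra-regularity results of Section~\ref{sec:reg}.
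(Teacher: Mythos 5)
Your strategy is exactly the paper's (its proof of this lemma is the single sentence ``use \eqref{Tortsimp} and integrate by parts''), and your structural observations are correct: the two $|D_\theta\phi|^2$ summands do collapse pointwise to $2r^{-2}|D_\theta\phi|^2$ with no cutoff derivatives, the $|D_r\phi|^2$ block yields $2\int |D_r\phi|^2\,\chi_R\,r\,dr+2\int |D_r\phi|^2\,r^2\chi_R'\,dr$, and the quartic block yields $-g\int|\phi|^4\chi_R r\,dr-\tfrac{g}{2}\int|\phi|^4 r^2\chi_R'\,dr$, all matching the display. The problem is the one step you defer: you assert, without computing, that the cubic block produces the coefficients $-\tfrac54$ and $-\tfrac72$. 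Carrying out the integrations by parts on
\[
\int_0^\infty \chi_R\, r\Bigl(\tfrac12 r\partial_r^3+\tfrac12\partial_r^2-\tfrac{1}{2r}\partial_r\Bigr)|\phi|^2\,dr,
\]
using $\partial_r^3(r^2\chi_R)=r^2\chi_R'''+6r\chi_R''+6\chi_R'$ and $\partial_r^2(r\chi_R)=r\chi_R''+2\chi_R'$, gives
\[
-\tfrac32\int \chi_R'\,|\phi|^2\,dr-\tfrac52\int r\chi_R''\,|\phi|^2\,dr-\tfrac12\int r^2\chi_R'''\,|\phi|^2\,dr,
\]
whereas the displayed identity asserts the coefficients $-\tfrac52$, $-\tfrac72$, $-\tfrac12$ for the same three integrals. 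The discrepancy is exactly $-\int|\phi|^2\,\partial_r(r\chi_R')\,dr$, which is not zero in general. So either the lemma as printed carries a typo in these two coefficients (harmless for Corollary \ref{cor:globap}, where all $\chi_R'$-type terms are simply bounded by the exterior mass) or some additional rearrangement is intended; in either case your proposal does not close this step, and it is precisely the step you yourself identify as the main obstacle. You must actually exhibit this bookkeeping rather than declare its outcome.

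A secondary point: your justification of the vanishing boundary terms at $r=0$ via $|\phi(r)|\lesssim r^{|m|}$ is vacuous for $m=0$, where a radial $H^1(\R^2)$ function need not even be bounded at the origin. What saves the computation is that $\chi_R\equiv 1$ near $r=0$, so all derivatives of the cutoff vanish there, and the three boundary contributions of $|\phi|^2(0^+)$ coming from the cubic block carry the weights $-\tfrac12\partial_r^2(r^2\chi_R)|_{0}=-1$, $+\tfrac12\partial_r(r\chi_R)|_{0}=+\tfrac12$, and $+\tfrac12\chi_R(0)=+\tfrac12$, which cancel identically; the remaining boundary terms carry factors of $r$ or $r^2$ against $\partial_r|\phi|^2$ or $\partial_r^2|\phi|^2$ and vanish along a sequence $r\to 0$ for the regular solutions to which the lemma is applied. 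Stating the cancellation, rather than appealing to pointwise decay, is the correct way to handle $m=0$.
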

\begin{proof}
This follows from using \eqref{Tortsimp} and integrating by parts.
\end{proof}

\begin{cor} \label{cor:globap}
Let $g < 1$. Global equivariant critical elements do not exist.
\end{cor}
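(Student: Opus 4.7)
The plan is to exploit the localized virial identity \eqref{viriallocal} to derive a contradiction between the strict positivity of the energy and the uniform spatial concentration of the critical element. By Lemma \ref{lem:blowup}, a hypothetical global critical element has $I = \R$ and $\sup_t N(t) < \infty$, so Theorem \ref{ExtraR} applies and $\phi \in L^\infty_t H^s_m$ for every $s \geq 0$; in particular $\phi(t) \in H^1_m$, nontrivial, so the Bogomol'nyi identity \eqref{enerbogo} gives $E(\phi) > 0$ (and $E$ is conserved because $\phi \in L^\infty_t H^1_m$).

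First I would control $I_R(\phi)(t)$ uniformly in $t$. Since $I_R(\phi) = \int_0^\infty r \Im(\bar\phi D_r\phi)\chi_R r\,dr$ with $\mathrm{supp}(\chi_R) \subset [0, 2R]$, Cauchy-Schwarz together with the trivial weight estimate $r \leq 2R$ on the support yields
\[
|I_R(\phi)(t)| \lesssim R\, \|\phi(t)\|_{L^2_x}\|D_r\phi(t)\|_{L^2_x} \lesssim_{\phi} R,
\]
uniformly in $t$, using charge conservation and the $H^1$ bound from extra regularity.

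Next I would show that for any prescribed $\varepsilon > 0$, one may choose $R = R(\varepsilon, \phi)$ so large that each of the four error integrals on the right-hand side of \eqref{viriallocal} has absolute value at most $\varepsilon$. The $\chi_R''$ and $\chi_R'''$ terms are supported in $R \leq r \leq 2R$, and the scaling of these derivatives together with charge conservation yields crude bounds of order $R^{-1}\chg(\phi)$. The remaining two error terms, involving $(\chi_R - 1)$ and $r\chi_R'$, are integrals of $|D_r\phi|^2 + r^{-2}|D_\theta\phi|^2 + |\phi|^4$ over a region contained in $\{r \geq R\}$. Here is where the work lies: almost periodicity modulo scaling together with $N(t) \lesssim 1$ gives $\int_{|x| \geq R} |\phi(t,x)|^2\,dx \leq \eta$ uniformly in $t$ for $R$ large, and the uniform $H^s$ bounds from Theorem \ref{ExtraR} upgrade this via interpolation and Gagliardo–Nirenberg to uniform smallness of the tails of $|D_x\phi|^2$ and $|\phi|^4$. (One must also absorb the covariant gauge terms, using \eqref{Athet2} so that $|A_x \phi| \lesssim \|\phi\|_{L^4}^2 |\phi|$, which is harmless pointwise.)

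Combining the two previous steps, for $R = R(E(\phi), \phi)$ chosen appropriately, the virial identity implies
\[
\frac{d}{dt} I_R(\phi)(t) \geq 2 E(\phi) \quad \text{for all } t \in \R.
\]
Integrating on $[0, T]$ and using the uniform bound $|I_R(\phi)| \lesssim R$ gives $2 E(\phi) T \lesssim R$, which is a contradiction for $T$ sufficiently large since $R$ is fixed and $E(\phi) > 0$. The main obstacle is the uniform-in-$t$ spatial tail control in the second step: raw almost periodicity only gives such control for $\phi$ itself in $L^2$, so one genuinely needs the extra regularity from \S \ref{sec:reg} to propagate the concentration to $|D_x\phi|$ and $|\phi|^4$; everything else is book-keeping around the support structure of $\chi_R$ and its derivatives.
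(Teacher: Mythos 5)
Your proposal is correct and follows essentially the same route as the paper: extra regularity from Theorem \ref{ExtraR} plus almost periodicity with $N(t)\lesssim 1$ give uniform-in-time smallness of the energy density outside a fixed ball, the localized virial identity \eqref{viriallocal} then yields $\frac{d}{dt}I_R(\phi)\gtrsim E(\phi)>0$ (with positivity of $E$ from the Bogomol'nyi identity), and this linear growth contradicts the uniform bound $|I_R(\phi)|\lesssim R$. The only cosmetic differences are the precise constant in the lower bound on $\frac{d}{dt}I_R$ and your slightly more explicit bookkeeping of the cutoff-derivative error terms.
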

\begin{proof}
Invoking Theorem \ref{ExtraR}, we have that for each $s \geq 0$ the estimate
\[
\| \phi(t, \cdot) \|_{\dot{H}^s(\R^2)} \leq C_s
\]
holds uniformly in time.
Next, let $\eta > 0$ and take $R = 2 C(\eta)$ so that
\[
\int_{|x| > R / 2} |\phi(t, x)|^2 dx < \eta
\]
for all time.
By interpolating, we can control the energy far from the origin:
\[
\int_R^\infty \left( |D_r \phi|^2 + \frac{1}{r^2} |D_\theta \phi|^2 - \frac{g}{2} |\phi|^4 \right) r dr \lesssim \eta^\frac12
\]
Using this in \eqref{viriallocal} implies
\[
\frac{d}{dt} I_R(\phi) \geq 4 E(\phi) - C \eta^\frac12
\]
Therefore, by conservation of energy, we have for $\eta$ sufficiently small that
\begin{equation} \label{growth}
\frac{d}{dt} I_R(\phi) \gtrsim 1
\end{equation}
On the other hand, by \eqref{viriallocal} and \eqref{energy-H1},
\[
| I_R(\phi) | \lesssim R \| \phi \|_{L^2} \| \phi \|_{\dot{H}^1} \lesssim R C_1
\]
holds uniformly in time. 
This contradicts \eqref{growth} for $t$ sufficiently large.
\end{proof}

\section{The focusing problem} \label{sec:ext}

In the focusing problem we shall restrict ourselves to $m \geq 0$. 
This is the physically interesting case for \eqref{equiCSS} as written. 
In fact, the natural Chern-Simons-Schr\"odinger
system for $m < 0$ is not \eqref{equiCSS}, but rather an analogous one with the signs
in the field constraints flipped. For further discussion of this point, see \cite[II.~C., E.]{Du95}.

\subsection{The case $g = 1$.}

\begin{lem} \label{lem:g1}
Let $g = 1$ and $m \in \Z_+$.
Suppose that $\phi \in L^\infty_t H^1_m$ is a solution of \eqref{equiCSS} 
with $E(\phi) = 0$. Then $\phi$ is a soliton.
\end{lem}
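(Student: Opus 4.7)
The starting point is the Bogomol'nyi identity \eqref{Bogo} in the form \eqref{enerbogo}. When $g = 1$, the quartic term cancels and
\[
E(\phi) = \tfrac12 \int_{\R^2} |D_+ \phi|^2\, dx.
\]
Since $\phi \in L^\infty_t H^1_m$ solves \eqref{equiCSS}, energy is conserved and identically zero by hypothesis, so $D_+ \phi(t, \cdot) = 0$ a.e.\ in $x$ for every $t$. In other words, $\phi$ is self-dual at each time slice.

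Next I would use the algebraic factorization underlying \eqref{Bogo}, namely $D_j D_j = D_- D_+ + iF_{12}$ (with $F_{12} = -\tfrac12 |\phi|^2$), to rewrite $D_j D_j \phi = -\tfrac12 |\phi|^2 \phi$. Substituting this and $g = 1$ into the compact form \eqref{phievo} of the evolution equation collapses the right-hand side to $D_t \phi = \tfrac{i}{2} |\phi|^2 \phi$. Unpacking $D_t = \partial_t + iA_0$ gives
\[
\partial_t \phi = i\bigl( \tfrac12 |\phi|^2 - A_0 \bigr) \phi.
\]
Since the coefficient is real, $\partial_t |\phi|^2 = 2 \Re(\bar\phi \partial_t \phi) = 0$. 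Consequently $|\phi|^2$ is time-independent, and by the static field constraints in \eqref{equiCSS} so are $A_\theta$ and $A_0$ (the latter being normalized to vanish at spatial infinity by Lemma \ref{lem:A0}).

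It remains to show $\alpha(r) := \tfrac12 |\phi|^2 - A_0$ is constant in $r$. Writing $\phi = e^{im\theta} u(r)$ and using $D_+ \phi = 0$, a direct computation in polar coordinates (as sketched near \eqref{ArAtheta->A1A2}) gives the first-order ODE $\partial_r u = r^{-1}(m + A_\theta) u$, hence
\[
\partial_r |\phi|^2 = \frac{2|\phi|^2}{r} (m + A_\theta).
\]
On the other hand, the second equation of \eqref{equiCSS} reads $\partial_r A_0 = r^{-1}(m + A_\theta)|\phi|^2$. Subtracting yields $\partial_r \alpha \equiv 0$, so $\alpha \equiv -\omega$ for some constant $\omega \in \R$. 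Integrating $\partial_t \phi = -i\omega \phi$ in time gives $\phi(t, x) = e^{-i\omega t} \phi(0, x)$, i.e., $\phi$ is a standing wave (soliton), as claimed.

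The main potential obstacle is ensuring that each manipulation is rigorously justified at the regularity $H^1_m$: in particular that $D_+ \phi \in L^2$ (hence the Bogomol'nyi identity truly characterizes $E(\phi) = 0$), that $|\phi|^2$ and $A_\theta$ are differentiable enough to carry out the $r$-derivative identities above, and that the pointwise formula for $\partial_t \phi$ holds in a strong enough sense to conclude $\partial_t |\phi|^2 = 0$. The $L^p$-mapping properties of $A_\theta$ and $A_0$ from Lemmas \ref{lem:Atheta} and \ref{lem:A0}, combined with $\phi \in L^\infty_t H^1_m$ and the enhanced regularity of Section \ref{sec:reg} where available, should be enough to run each step by a standard approximation/density argument.
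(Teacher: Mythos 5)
Your proof is correct and follows essentially the same route as the paper: the $g=1$ Bogomol'nyi identity forces $D_+\phi=0$, whose equivariant form $\partial_r u = r^{-1}(m+A_\theta)u$ combined with the constraint $\partial_r A_0 = r^{-1}(m+A_\theta)|\phi|^2$ gives $A_0 = \tfrac12|\phi|^2$ up to a constant, identifying $\phi$ with a self-dual soliton (your extra step of writing out $\partial_t\phi = i(\tfrac12|\phi|^2 - A_0)\phi$ just makes the time dependence explicit, and note the factorization is $D_jD_j = D_-D_+ + F_{12}$, without the $i$ you wrote, though your subsequent use is correct). The one loose end is that the decay of $A_0$ and $|\phi|^2$ at spatial infinity forces your constant $\omega$ to vanish, so the solution is in fact static, recovering the paper's sharper conclusion $A_0 = \tfrac12|\phi|^2$.
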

\begin{proof}
Straightforward calculations reveal
\begin{equation} \label{Dplus}
D_+ = e^{i \theta} \left( D_r + \frac{i}{r} D_\theta \right)
\end{equation}
and
\[
|D_\theta \phi|^2 = (m + A_\theta)^2 |\phi|^2
\]
By \eqref{enerbogo}, $E(\phi) = 0$ implies $D_+ \phi = 0$ a.e. 
For $m$-equivariant $\phi$, this implies
\[
\partial_r \phi = \frac{1}{r} (m + A_\theta) \phi
\]
Consequently,
\[
\frac12 \partial_r |\phi|^2 = \frac{1}{r}(m + A_\theta) |\phi|^2 = \partial_r A_0
\]
so that
$A_0 = \frac12|\phi|^2$. Therefore $\phi$ is an equivariant solution of the self-dual 
Chern-Simons-Schr\"odinger system
\begin{equation} \label{SDCSS}
\begin{cases}
(D_1 + i D_2) \phi &= 0 \\
A_0 &= \frac12 |\phi|^2 \\
\partial_1 A_2 - \partial_2 A_1 &= - \frac12 |\phi|^2
\end{cases}
\end{equation}
Such solutions constitute static solutions to \eqref{equiCSS} (with $g = 1$).
Conversely, any $H^1$ static solution of \eqref{equiCSS} with $g = 1$ has $E(\phi) = 0$
(for a short proof, see \cite{HuSe13}).
\end{proof}
If $m \in \Z_+$, then explicit equivariant solutions are given by
\[
\begin{cases}
\phi^{(m)}(t, x) &= \sqrt{8} \lambda (m + 1) \frac{|\lambda x|^m}{1 + |\lambda x|^{2(m+1)}} e^{i m \theta} \\
A_j^{(m)}(t, x) &= 2(m+1) \lambda^2 \frac{\epsilon_{jk} x_k |\lambda x|^{2m}}{1 + |\lambda x|^{2(m+1)}} \\
A_0^{(m)}(t, x) &= 4 \left[ \frac{\lambda (m+1) |\lambda x|^m}{1 + |\lambda x|^{2(m+1)}} \right]^2 
\end{cases}
\]
where $\lambda > 0$ is a free scaling parameter and $\epsilon_{jk}$ is the anti symmetric tensor with
$\epsilon_{12} = 1$. Such solutions are discussed, for instance, in \cite{JaPi90, JaPi91}.
For any value $\lambda > 0$, we find
\[
\chg(\phi^{(m)}) = 8 \pi (m + 1)
\]
Uniqueness of these explicit soliton solutions is discussed in \cite{JaPi90} and a proof
can be given by combining the arguments of \cite{ChWa94} with the equivariance ansatz.

With Lemma \ref{lem:g1} in hand, we can conclude the proof of Theorem \ref{thm:main2} using arguments from
\S \ref{sec:noaps}.



\subsection{The case $g > 1$.}

\begin{lem} \label{lem:focus}
Let $g > 1$. Then there exists a constant $c_{g} > 0$ such that
any nontrivial $H^1$ solution $\phi$ of \eqref{equiCSS} with $E(\phi) \leq 0$
satisfies $\chg(\phi) \geq c_{g}$.
\end{lem}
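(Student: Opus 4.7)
The plan is to bootstrap from the Bogomol'nyi identity \eqref{enerbogo} established earlier, combined with a diamagnetic Gagliardo--Nirenberg inequality, to force a lower bound on the charge whenever $E(\phi) \leq 0$.

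First I would rewrite the hypothesis $E(\phi) \leq 0$ using \eqref{enerbogo}. For $g > 1$ the fourth-power term has the opposite sign from the $|D_+\phi|^2$ term, and
\[
\tfrac12 \int_{\R^2} |D_+ \phi|^2 \, dx + \tfrac14 (1-g) \int_{\R^2} |\phi|^4 \, dx \leq 0
\]
immediately gives
\[
\int_{\R^2} |D_+ \phi|^2 \, dx \leq \tfrac{g-1}{2} \int_{\R^2} |\phi|^4 \, dx.
\]
Next, I would integrate the full Bogomol'nyi identity \eqref{Bogo}; since $\int \nabla \times J = 0$ for $H^1_m$ data with $\phi\in L^4$, and since $F_{12} = -\tfrac12|\phi|^2$, this yields
\[
\int_{\R^2} |D_x \phi|^2\,dx = \int_{\R^2} |D_+\phi|^2\,dx + \tfrac12 \int_{\R^2} |\phi|^4\,dx \leq \tfrac{g}{2} \int_{\R^2} |\phi|^4 \, dx.
\]

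Now I would invoke the two-dimensional Gagliardo--Nirenberg inequality $\|u\|_{L^4}^4 \lesssim \|u\|_{L^2}^2 \|\nabla u\|_{L^2}^2$ applied to $u = |\phi|$, together with the pointwise diamagnetic inequality $|\nabla |\phi|| \leq |D_x \phi|$ (which is standard and requires no sign on $A$). This gives
\[
\int_{\R^2} |\phi|^4 \, dx \leq C \, \chg(\phi) \int_{\R^2} |D_x \phi|^2 \, dx \leq C \tfrac{g}{2} \, \chg(\phi) \int_{\R^2} |\phi|^4 \, dx.
\]
Since $\phi$ is a nontrivial $H^1$ function, $\int |\phi|^4 > 0$ (any $H^1_m$ function lies in $L^4$ in two dimensions and vanishes in $L^4$ only if a.e.\ zero), so I can divide through to obtain
\[
\chg(\phi) \geq \frac{2}{C g} =: c_g,
\]
which is the desired bound.

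The argument has no real obstacles; the only point requiring care is to justify the two integration-by-parts/decay steps (discarding $\int \nabla \times J$ and applying G--N) for $H^1_m$ data, but both are routine since $H^1_m \hookrightarrow L^p$ for all $p < \infty$ and $A_x \phi \in L^2$ by the bounds of Lemma \ref{lem:Atheta}. The constant $c_g$ we produce is not sharp; the sharp threshold (claimed in Theorem \ref{thm:main3} to equal the minimum $L^2$-norm of a nontrivial standing wave) requires a separate variational argument, which I anticipate the authors carry out elsewhere in \S\ref{sec:ext}.
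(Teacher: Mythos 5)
Your proof is correct and follows essentially the same route as the paper: the key inequality $\|D_x\phi\|_{L^2}^2 \leq \tfrac{g}{2}\|\phi\|_{L^4}^4$ (which the paper reads off directly from the definition \eqref{energy}, making your detour through the Bogomol'nyi identity redundant) combined with the covariant Gagliardo--Nirenberg inequality \eqref{CovGN}. The only difference is that you supply a proof of \eqref{CovGN} via the diamagnetic inequality, whereas the paper simply cites \cite{BeBoSa95}.
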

\begin{proof}
Using \eqref{energy} we see that $E(\phi) \leq 0$ implies
\[
\frac{2}{g} \| D_x \phi \|_{L^2_x}^2 \leq \| \phi \|_{L^4_x}^4
\]
We can combine this with the covariant Gagliardo-Nirenberg inequality 
(e.g., see \cite[(2.28)]{BeBoSa95} for a proof), which states that
\begin{equation} \label{CovGN}
\| \phi \|_{L^4_x}^4 \lesssim \| D_x \phi \|_{L^2_x}^2 \| \phi \|_{L^2_x}^2,
\end{equation}
to conclude that $E(\phi) \leq 0$ implies
\[
\| \phi \|_{L^2_x}^2 \gtrsim \frac{2}{g}
\]
\end{proof}
Using the Bogomol'nyi identity \eqref{Bogo}, we may arrive at the following inequality, which
is similar to an inequality of Byeon, Huh, and Seok \cite[p.~1607]{ByHuSe12}.
\begin{lem}
Let $\phi$ be an $H^1$ solution of \eqref{CSS}, $g \in \R$. Then
\begin{equation} \label{GNtype1}
\| \phi \|_{L^4_x}^4 \leq 4 \| D_r \phi \|_{L^2_x} \| \frac1r D_\theta \phi \|_{L^2_x}
\end{equation}
\end{lem}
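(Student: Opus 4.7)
The plan is to leverage the Bogomol'nyi identity \eqref{Bogo} together with the polar-coordinate formula \eqref{Dplus} for $D_+$, comparing the two expressions for $|D_x\phi|^2 - |D_+\phi|^2$ they imply and then applying Cauchy--Schwarz.

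\textbf{Step 1: The polar identity.} First I would use \eqref{Dplus} to expand
\[
|D_+\phi|^2 = \Bigl|D_r\phi + \tfrac{i}{r}D_\theta\phi\Bigr|^2 = |D_r\phi|^2 + \tfrac{1}{r^2}|D_\theta\phi|^2 - \tfrac{2}{r}\Im\bigl(\overline{D_r\phi}\,D_\theta\phi\bigr),
\]
so that, comparing with $|D_x\phi|^2 = |D_r\phi|^2 + \tfrac{1}{r^2}|D_\theta\phi|^2$ in polar coordinates, one obtains the pointwise identity
\[
|D_x\phi|^2 - |D_+\phi|^2 = \tfrac{2}{r}\Im\bigl(\overline{D_r\phi}\,D_\theta\phi\bigr).
\]

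\textbf{Step 2: Integrating the Bogomol'nyi identity.} On the other hand, \eqref{Bogo} combined with $F_{12} = -\tfrac12|\phi|^2$ gives
\[
|D_x\phi|^2 - |D_+\phi|^2 = \nabla \times J + \tfrac12|\phi|^4.
\]
Integrating over $\R^2$, the curl term contributes zero (by Green's theorem, using that $J = \Im(\bar\phi D_x\phi) \in L^1$ for $\phi\in H^1$, together with a standard cutoff/density argument as in the derivation of \eqref{enerbogo}), yielding
\[
\int_{\R^2}\bigl(|D_x\phi|^2 - |D_+\phi|^2\bigr) dx = \tfrac12 \|\phi\|_{L^4}^4.
\]
Combining with the polar identity from Step 1 gives
\[
\tfrac12 \|\phi\|_{L^4}^4 = 2\int_{\R^2}\tfrac{1}{r}\Im\bigl(\overline{D_r\phi}\,D_\theta\phi\bigr) dx.
\]

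\textbf{Step 3: Cauchy--Schwarz.} Bounding the right-hand side by Cauchy--Schwarz, writing the integrand as $\overline{D_r\phi}\cdot\tfrac{1}{r}D_\theta\phi$, yields
\[
\tfrac12 \|\phi\|_{L^4}^4 \leq 2\|D_r\phi\|_{L^2}\bigl\|\tfrac{1}{r}D_\theta\phi\bigr\|_{L^2},
\]
which is \eqref{GNtype1} after multiplying by $2$. The only slightly delicate point is the vanishing of $\int \nabla \times J\, dx$: strictly speaking $J \in L^1$ is insufficient for a direct boundary-term argument, so I would justify it by approximating $\phi$ by Schwartz functions in $H^1$ (in the appropriate gauge) and passing to the limit, exactly as the paper tacitly does when deducing \eqref{enerbogo} from \eqref{Bogo}.
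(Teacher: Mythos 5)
Your proof is correct and follows essentially the same route as the paper: integrate the Bogomol'nyi identity \eqref{Bogo}, use \eqref{Dplus} to identify $\int(|D_x\phi|^2 - |D_+\phi|^2)$ with the cross term $2\int \Im(\overline{r^{-1}D_\theta\phi}\,D_r\phi)$ (this is \eqref{L4eq}; your version differs only by the sign convention $\Im(\bar a b) = -\Im(\bar b a)$, which is immaterial after the final step), and conclude by Cauchy--Schwarz.
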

\begin{proof}
Integrating \eqref{Bogo} over $\R^2$ and using the observation \eqref{Dplus},
we conclude
\begin{equation} \label{L4eq}
\int \frac12 |\phi|^4
=
2 \int \Im(\overline{r^{-1} D_\theta \phi} D_r \phi)
\end{equation}
Then \eqref{GNtype1} follows by Cauchy-Schwarz.
\end{proof}
Applying Young's inequality to \eqref{GNtype1} yields
\begin{equation} \label{CovSobo}
\| \phi \|_{L^4_x}^4 \leq 2 \| D_r \phi \|_{L^2_x}^2 + 2 \| \frac1r D_\theta \phi \|_{L^2_x}^2
= 2 \| D_x \phi \|_{L^2_x}^2
\end{equation}
In particular, in the $g = 1$ case, zero-energy solutions of \eqref{equiCSS} are precisely
those that yield equality in \eqref{CovSobo}.
More generally, using \eqref{energy} and \eqref{L4eq}, we observe that $E(\phi) \leq 0$ implies that $\phi$
satisfies the following reverse Cauchy-Schwarz inequality:
\begin{equation} \label{ReverseCS}
\int \left[ |\partial_r \phi|^2 + \frac{1}{r^2} |D_\theta \phi|^2 \right] dx
\leq
2 g
\int \Im(\overline{r^{-1}D_\theta \phi} \partial_r \phi) dx
\end{equation}
\begin{rem}
The constants in Lemma \ref{lem:focus} are universal (in that they are not dependent upon
the equivariance index $m$ or even upon the satisfaction of the equivariance ansatz) 
but not sharp. In the next lemma we show that, given an equivariance
index $m \in \Z_+$, the sharp charge threshold for the class $H^1_m$ may be found by
minimizing over nontrivial energy zero solutions in that class.
\end{rem}
\begin{lem} \label{lem:7.5}
Let $m \in \Z_+$ and, for $\phi \in H^1_m$,
\begin{equation} \label{Jenergy}
J(\phi)
:=
\int_{\R^2} \left[
|\partial_r \phi|^2 + \frac{1}{r^2} \left(m - \frac12 \int_0^r |\phi|^2 s ds \right)^2 |\phi|^2 - 
\frac{g}{2} |\phi|^4 \right] r dr
\end{equation}
Then
\[
\inf_{0 \neq \phi \in H^1_m: J(\phi) \leq 0} \chg(\phi) = \inf_{0 \neq \phi \in H^1_m: J(\phi) = 0} \chg(\phi)
\]
\end{lem}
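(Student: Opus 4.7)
The inclusion $\{J = 0\} \subset \{J \leq 0\}$ yields the easy inequality $\inf_{\{J \leq 0\}} \chg \leq \inf_{\{J = 0\}} \chg$, and our task is to establish the reverse. The plan is to show that any nontrivial $\phi \in H^1_m$ with $J(\phi) \leq 0$ can be continuously deformed inside $H^1_m$ to a nontrivial $\psi$ with $J(\psi) = 0$ and $\chg(\psi) \leq \chg(\phi)$; taking infima then gives the inequality.

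The observation driving the deformation is that $A_\theta$, being determined by $|\phi|^2$ through the curvature relation \eqref{Atheta}, is \emph{quadratic} in $\phi$. This mismatch in homogeneity suggests the pure amplitude rescaling $\phi_t := t \phi$, $t \in [0,1]$, which manifestly preserves the $m$-equivariance class. Under it, $A_\theta[\phi_t] = t^2 A_\theta[\phi]$, and direct substitution into the definition \eqref{Jenergy} gives
\[
J(\phi_t) = t^2 \int_0^\infty \left[ |\partial_r \phi|^2 + \frac{(m + t^2 A_\theta[\phi])^2}{r^2}|\phi|^2 - \frac{g t^2}{2} |\phi|^4 \right] r\,dr.
\]
Define $K(t) := t^{-2} J(\phi_t)$ for $t > 0$; this is polynomial, hence continuous, in $t$, and extends continuously to $t = 0$ via
\[
K(0) = \int_0^\infty \left[ |\partial_r \phi|^2 + \frac{m^2}{r^2} |\phi|^2 \right] r\,dr = \frac{1}{2\pi} \|\nabla \phi\|_{L^2_x(\R^2)}^2,
\]
the second equality being just the polar expression of $|\nabla \phi|^2$ for an $m$-equivariant function. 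In particular $K(0)$ is finite for every $\phi \in H^1_m$, and strictly positive for nontrivial such $\phi$ (if $\nabla \phi = 0$ a.e., then $\phi$ is a.e.\ constant and hence zero as $\phi \in L^2(\R^2)$). Since $K(1) = J(\phi) \leq 0$, the intermediate value theorem furnishes some $t_\star \in (0, 1]$ with $K(t_\star) = 0$, whence $J(\phi_{t_\star}) = 0$, and $\chg(\phi_{t_\star}) = t_\star^2 \chg(\phi) \leq \chg(\phi)$.

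No serious obstacle is expected. The only step one needs to verify with care is that the endpoint value $K(0)$ is both finite and positive for arbitrary nontrivial $\phi \in H^1_m$; the identification of $K(0)$ with $(2\pi)^{-1}\|\nabla \phi\|_{L^2}^2$ takes care of both at once, handling the cases $m = 0$ and $m \geq 1$ uniformly, and sidestepping any separate appeal to an equivariant Hardy inequality.
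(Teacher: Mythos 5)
Your proposal is correct and follows essentially the same route as the paper: both exploit that $A_\theta$ is quadratic in $\phi$, rescale the amplitude $\phi \mapsto \alpha\phi$, divide $J(\alpha\phi)$ by $\alpha^2$, observe that the limit as $\alpha \to 0$ is $\int \bigl[ |\partial_r\phi|^2 + \tfrac{m^2}{r^2}|\phi|^2 \bigr]\, r\,dr > 0$ for nontrivial $\phi$, and apply the intermediate value theorem to land on $\{J = 0\}$ with non-increased charge. Your formulation applied to an arbitrary element of $\{J \leq 0\}$ rather than to a minimizing sequence is a harmless (indeed slightly cleaner) repackaging of the same argument, and your identification of the limiting value with $(2\pi)^{-1}\|\nabla\phi\|_{L^2}^2$ correctly supplies the positivity that the paper asserts.
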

\begin{proof}
Note that for solutions $\phi \in H^1_m$ of \eqref{equiCSS}, the expression for $J(\phi)$
is equivalent to that of $E(\phi)$, with $E$ defined as in \eqref{energy}.
Let $\phi_n \in H^1_m$ for $n = 1, 2, 3, \ldots$ be a minimizing sequence
with $\chg(\phi_n) \to I$, where
\[
I := \inf_{0 \neq \phi \in H^1_m: J(\phi) \leq 0} \chg(\phi)
\]
Then, for $\alpha \in \R$,
\[
J(\alpha \phi_n) = 
\alpha^2 \int_{\R^2} \left[ |\partial_r \phi_n|^2 + \frac{1}{r^2}
\left(m - \alpha^2 \frac12 \int_0^r |\phi_n|^2 s ds \right)^2 |\phi_n|^2
- \alpha^2 \frac{g}{2} |\phi_n|^4 \right] dx
\]
Because
\[
\lim_{\alpha \to 0} \alpha^{-2} J(\alpha \phi_n)
=
\int_{\R^2} \left[ |\partial_r \phi_n|^2 + \frac{m}{r^2} |\phi_n|^2 \right] dx > 0,
\]
there exists $\alpha_n \in (0, 1]$ such that $E(\alpha_n \phi_n) = 0$.
Set $\psi_n := \alpha_n \phi_n$. Then $E(\psi_n) = 0$ and $\chg(\psi_n) \leq \chg(\phi_n)$.
Passing to a convergent subsequence, we obtain
\[
\lim \chg(\psi_n) \leq \lim \chg(\phi_n) = I
\]
\end{proof}
\begin{lem} \label{lem:7.6}
The set of minimizers over $J(\phi) = 0$ is nonempty.
\end{lem}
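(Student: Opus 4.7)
The plan is to produce a minimizer by the direct method of the calculus of variations, combined with a concentration-compactness argument adapted to the $L^2$-critical scaling. Let
\[
I := \inf\{\chg(\phi) : 0 \neq \phi \in H^1_m,\; J(\phi) \leq 0\} = \inf\{\chg(\phi) : 0 \neq \phi \in H^1_m,\; J(\phi) = 0\},
\]
the latter equality being Lemma \ref{lem:7.5}. Let $\{\phi_n\} \subset H^1_m$ be a minimizing sequence with $J(\phi_n)=0$ and $\chg(\phi_n) \to I$. The first task is to normalize using the scaling $\phi \mapsto \phi_\lambda$, $\phi_\lambda(x) := \lambda \phi(\lambda x)$, which preserves $\chg$ and scales every term of $J$ by $\lambda^2$ (a direct check using the $L^2$-criticality of the covariant kinetic, quartic, and Chern-Simons terms, together with the invariance $A_\theta[\phi_\lambda](r) = A_\theta[\phi](\lambda r)$). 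Hence I may rescale so that $\|\partial_r \phi_n\|_{L^2}^2 = 1$. The constraint $J(\phi_n)=0$ then forces $\frac{g}{2}\|\phi_n\|_{L^4}^4 \geq 1$, so by the covariant Gagliardo--Nirenberg inequality \eqref{CovGN} the charge is uniformly bounded both above (by $I+o(1)$) and below (by Lemma \ref{lem:focus}), and $\|D_x\phi_n\|_{L^2}$ is uniformly bounded.

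Next, I would extract a weakly convergent subsequence $\phi_n \rightharpoonup \phi_\infty$ in $H^1_m$ and show that $\phi_\infty \neq 0$. This is where the concentration-compactness dichotomy enters: since translations are killed by the equivariance ansatz, the only way compactness can fail is by (i) vanishing, i.e.\ $\|\phi_n\|_{L^4} \to 0$, or (ii) escape to $r = 0$ or $r = \infty$. Vanishing is ruled out by the lower bound $\|\phi_n\|_{L^4}^4 \geq 2/g$ noted above. Escape at infinity is ruled out by the radial Sobolev-type decay estimate $|\phi_n(r)| \lesssim r^{-1/2} \|\phi_n\|_{H^1}$ together with a tightness argument, while escape at the origin is controlled using the $H^1$ bound and, for $m\geq 1$, the extra vanishing forced by equivariance. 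Any remaining nontrivial scale splitting can be absorbed by a further rescaling, reducing to the case where $\phi_\infty \neq 0$.

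The final step is to pass to the limit in $\chg$ and $J$. Weak $H^1$ lower semicontinuity handles $\chg(\phi_n)$ and $\|\partial_r\phi_n\|_{L^2}^2$. Compactness of the embedding $H^1_m \hookrightarrow L^4(\R^2)$ for tight equivariant sequences upgrades weak to strong $L^4$ convergence, so $\|\phi_n\|_{L^4}^4 \to \|\phi_\infty\|_{L^4}^4$. For the nonlocal Chern-Simons term $\int \frac{1}{r^2}(m + A_\theta[\phi])^2 |\phi|^2 \, r\,dr$, I would combine the pointwise bound $|A_\theta[\phi_n](r)| \lesssim \chg(\phi_n)$ from \eqref{Atheta} with strong $L^2_{\mathrm{loc}}$ convergence of $|\phi_n|^2$ to pass $A_\theta[\phi_n] \to A_\theta[\phi_\infty]$ pointwise and apply dominated convergence (with Fatou for the integrated quantity). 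This yields $J(\phi_\infty) \leq \liminf J(\phi_n) = 0$ and $\chg(\phi_\infty) \leq \liminf \chg(\phi_n) = I$. Since $\phi_\infty \neq 0$, the definition of $I$ forces $\chg(\phi_\infty) \geq I$, hence equality; Lemma \ref{lem:7.5} then shows one may assume $J(\phi_\infty) = 0$, giving a minimizer.

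The main obstacle is the lack of compactness inherent to the $L^2$-critical scaling, compounded by the nonlocality of $A_\theta$. Ruling out vanishing and scale escape to $0$ or $\infty$ requires delicate use of the equivariance (to kill translations) and the pointwise bounds on $A_\theta$ from Lemma \ref{lem:Atheta} (to pass to the limit in the quadratic Chern-Simons term). Once these are controlled, the remaining weak-lower-semicontinuity manipulations are standard.
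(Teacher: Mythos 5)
Your overall strategy --- the direct method with a scaling renormalization, extraction of a weak limit in $H^1_m$, compactness of the equivariant embedding into $L^q$ for $q>2$, and Fatou/dominated convergence for the nonlocal term --- is exactly the route the paper takes (the paper simply cites \cite[\S 5]{ByHuSe12} for $m=0$ and notes that one removes the scaling degeneracy by renormalizing the $\dot H^1$ norm and then passes to a weak $H^1_m$ limit converging strongly in $L^q_m$, $q>2$). However, there is one concrete gap in your normalization step when $m \geq 1$. You rescale so that $\|\partial_r\phi_n\|_{L^2}=1$ and then assert that $\|D_x\phi_n\|_{L^2}$ (and implicitly the full $H^1_m$ norm) is uniformly bounded, deducing this from the constraint and \eqref{CovGN}. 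This does not close: the constraint $J(\phi_n)=0$ gives $\|D_x\phi_n\|_{L^2}^2 = \tfrac{g}{2}\|\phi_n\|_{L^4}^4$ exactly, and feeding this into \eqref{CovGN} returns only the charge lower bound $\chg(\phi_n)\gtrsim 2/g$, not an upper bound on $\|\phi_n\|_{L^4}$ or $\|D_x\phi_n\|_{L^2}$. Since Hardy's inequality fails in two dimensions, $\|\tfrac{m}{r}\phi_n\|_{L^2}$ --- and therefore $\|\nabla\phi_n\|_{L^2}$ --- is not controlled by $\|\partial_r\phi_n\|_{L^2}$ and $\chg(\phi_n)$, so the weak compactness extraction in $H^1_m$ is unjustified as written.

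The fix is to normalize the full $\dot H^1$ norm, $\|\nabla\phi_n\|_{L^2}^2 = \|\partial_r\phi_n\|_{L^2}^2 + m^2\|\tfrac{\phi_n}{r}\|_{L^2}^2 = 1$ (this quantity also scales like $\lambda^2$ under $\phi\mapsto\lambda\phi(\lambda\cdot)$), which is what the paper prescribes. Then $\|\phi_n\|_{H^1}$ is bounded and the ordinary Gagliardo--Nirenberg inequality bounds $\|\phi_n\|_{L^4}$ from above; nonvanishing still follows, since $(m+A_\theta)^2 \geq \tfrac12 m^2 - A_\theta^2$ together with \eqref{Athet2} gives
\[
\tfrac{g}{2}\|\phi_n\|_{L^4}^4 + C\,\chg(\phi_n)\,\|\phi_n\|_{L^4}^4 \;\geq\; \tfrac12\|\nabla\phi_n\|_{L^2}^2 \;=\; \tfrac12,
\]
so $\|\phi_n\|_{L^4}$ is also bounded below. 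With this correction the rest of your limit passage (strong $L^4$ convergence from the compact equivariant embedding, pointwise convergence of $A_\theta[\phi_n]$ plus Fatou for the positive nonlocal term, weak lower semicontinuity of the charge, and the $\alpha$-rescaling from Lemma \ref{lem:7.5} to restore $J=0$) goes through and matches the paper's intended argument.
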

The proof for $m = 0$ is found in \cite[\S 5]{ByHuSe12}
and generalizes to the case $m > 0$. There is a lack of compactness due to the scaling symmetry
which is removed by renormalizing the $\dot{H}^1$ norm. Once this is done, one may pass
to a weak limit in $H^1_m$ that also converges strongly in $L^q_m$, $q > 2$.

In the next lemma we characterize energy-zero minimal charge solutions $\phi \in H^1_m \setminus \{0\}$.
\begin{lem} \label{lem:7.7}
Let $\phi \in H^1_m \setminus \{0\}$ with $E(\phi) = 0$ 
have minimal charge among all such nontrivial $m$-equivariant functions with energy zero.
Then there exists $\lambda \in \R$ such that
$\psi(t, x) := e^{i \lambda t} \phi(x)$ is a weak
solution of \eqref{equiCSS}.
\end{lem}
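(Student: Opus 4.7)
The plan is a standard constrained-minimization argument: $\phi$ realizes the infimum of the smooth functional $\chg$ on the level set $\{J=0\} \subset H^1_m \setminus \{0\}$, so by the Lagrange multiplier theorem there exist $a, b \in \R$ not both zero with
\[
a\, d\chg(\phi)[h] = b\, dJ(\phi)[h]
\]
for every $h \in H^1_m$, where $d$ denotes the real-linear Fr\'echet differential and $h$ is complex-valued. The charge variation is immediate: $d\chg(\phi)[h] = 2\,\Re \int \bar\phi\, h\, dx$. The remaining task is to compute $dJ(\phi)$ and to identify the resulting Lagrange identity with the spatial profile equation for a standing wave.

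The computation of $dJ(\phi)$ is the key step. The kinetic term $\int |\partial_r\phi|^2\, r\, dr$ and the pure-power term $\int |\phi|^4\, r\, dr$ contribute, after integration by parts in $r$ with measure $r\, dr$, the local expressions $-\Delta\phi$ and $-g|\phi|^2\phi$ respectively, each paired against $\bar h$ under a $2\,\Re$. The delicate piece is the covariant term $\int r^{-2}(m + A_\theta)^2 |\phi|^2\, r\, dr$, which depends on $\phi$ both explicitly and implicitly through $A_\theta(\phi) = -\tfrac12 \int_0^r |\phi|^2 s\, ds$. The explicit variation produces the local terms $\tfrac{2m}{r^2} A_\theta \phi + r^{-2} A_\theta^2 \phi$. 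The implicit variation contributes $4\,\Re \int r^{-2}(m + A_\theta)|\phi|^2 \,\delta A_\theta\, r\, dr$ with $\delta A_\theta = -\Re \int_0^r \bar\phi\, h\, s\, ds$, and Fubini yields
\[
-2\,\Re \int \phi \left( \int_r^\infty \tfrac{m + A_\theta(s)}{s} |\phi(s)|^2\, ds \right) \bar h\, dx = 2\,\Re \int A_0\, \phi\, \bar h\, dx,
\]
the last equality being the formula for $A_0 = A_0^{(1)} + A_0^{(2)}$ from Lemma \ref{lem:A0}. Collecting everything,
\[
dJ(\phi)[h] = 2\,\Re \int \bigl[-\Delta\phi + \Lambda(\phi)\bigr]\bar h\, dx.
\]

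Substituting into the Lagrange identity, one sees $b \neq 0$ (else $a \neq 0$ would force $d\chg(\phi) \equiv 0$ on $H^1_m$, hence $\phi = 0$), and setting $\mu := a/b$ produces the weak equation $-\Delta\phi + \Lambda(\phi) = \mu\phi$ on $\R^2$. Put $\lambda := -\mu$ and let $\psi(t,x) := e^{i\lambda t}\phi(x)$. Since $|\psi|^2 = |\phi|^2$ is $t$-independent, the auxiliary fields $A_\theta, A_0$ determined by the Coulomb constraints for $\psi$ coincide with those for $\phi$, so $\Lambda(\psi) = e^{i\lambda t}\Lambda(\phi)$; on the other hand $(i\partial_t + \Delta)\psi = e^{i\lambda t}(-\lambda\phi + \Delta\phi)$, and comparing with the Lagrange equation shows this equals $e^{i\lambda t}\Lambda(\phi)$. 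Hence $\psi$ is a weak solution of \eqref{equiCSS}. The main obstacle is the derivation of $dJ$ sketched in the middle paragraph: one must track the implicit $\phi$-dependence of $A_\theta$ carefully and verify that the Fubini rearrangement of the nonlocal variation reproduces exactly the $A_0\phi$ term of the system, confirming that $J$ is the Hamiltonian for the equivariant $\phi$-dynamics under the Coulomb-gauge and curvature constraints.
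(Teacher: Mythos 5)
Your argument is correct and is essentially the paper's proof: a Lagrange multiplier computation in which the first variation of $J$ is split into the explicit part (yielding the local terms of $\Lambda(\phi)$) and the implicit variation through $A_\theta$, which after a Fubini/integration-by-parts rearrangement reproduces exactly the $A_0\phi$ term. The only blemish is the spurious factor $4$ (rather than $2$) in your intermediate display for the implicit variation; it does not propagate, since your stated $\delta A_\theta$ and your final identity $dJ(\phi)[h]=2\,\Re\int[-\Delta\phi+\Lambda(\phi)]\bar h\,dx$ are both correct.
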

\begin{proof}
We use Lagrange multipliers, which necessitates taking the first variation of $J(\phi)$.
Varying the $\phi(r, \theta)$ terms leads to 
\[
 2 \int \left[
 \Re(\overline{\partial_r \phi} \partial_r \psi) + \frac{1}{r^2} (m + A_\theta)^2 \Re(\bar{\phi} \psi)
- g |\phi|^2 \Re(\bar{\phi} \psi) \right] r dr
\]
which upon integration by parts becomes
\[
- \int \Re\left( \bar{\psi} (\partial_r^2 + r^{-1} \partial_r + r^{-2} D_\theta + g |\phi|^2) \phi \right) r dr
\]
The additional contribution from the variation of $\phi(s, \rho)$ is
\begin{equation} \label{addcont}
- \int \left( \frac{2m}{r^2} - \frac{1}{r^2} \int_0^r |\phi|^s s ds \right) \int_0^r \Re(\bar{\phi} \psi) s ds |\phi|^2 r dr
\end{equation}
Let
\[
F(r) = - \int_r^{\infty} \left( \frac{m}{r} - \frac{1}{2r} \int_0^r |\phi|^2 s ds \right) |\phi|^2 dr
\]
Then \eqref{addcont} may be rewritten as
\[
-2 \int \int_0^r \Re(\bar{\phi} \psi) s ds \partial_r F(r) dr
\]
which upon integration by parts is seen to be
\[
2 \int_0^\infty F(r) \Re(\bar{\phi} \psi) dr
\]
With the observation that in fact we may take $A_0 = F(r)$, the proof is complete.
\end{proof}
The above variation is discussed in \cite[II.~B.]{Du95} and is similar to 
the approach of \cite{ByHuSe12}.
Solutions $\psi \in L^\infty H^1_m$ of \eqref{equiCSS} of the form 
$\psi(t, x) = e^{i \lambda t} \phi$, $\phi \in H^1_m$, we call standing wave solutions.
\begin{rem}
When $g = 1$, static solutions (standing wave solutions with $\lambda = 0$) exist
but  $\lambda \neq 0$ standing wave solutions do not.
When $g > 1$ and $m = 0$, Byeon, Huh, and Seok \cite[Rem.~5.1]{ByHuSe12}
conjecture that there are no static solutions.
\end{rem}
\begin{rem}
Together Lemmas \ref{lem:7.5}, \ref{lem:7.6}, and \ref{lem:7.7} complete the characterization
of the constants $c_{m, g}$ claimed in Theorem \ref{thm:main3}. Combining this with
the arguments of \S \ref{sec:noaps} completes its proof.
\end{rem}
We conclude with two Pohozaev-type identities of independent interest.
\begin{lem}[Pohozaev identity]
Let $\phi \in L^\infty_t H^1_m$ be a standing wave solution of \eqref{equiCSS}.
Then
\begin{equation} \label{Pohozaev}
\int (\lambda + A_0) |\phi|^2 dx = \frac{g}{2} \int |\phi|^4 dx
\end{equation}
\end{lem}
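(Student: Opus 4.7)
The plan is to derive the identity by applying the standard Pohoz̆aev multiplier $x\cdot\nabla\bar{\phi}$ to the stationary equation satisfied by $\phi$, exploiting the fact that the kinetic/Laplacian contribution vanishes identically in two spatial dimensions, together with the field constraints from \eqref{equiCSS} that allow a miraculous cancellation of the $A_\theta$ terms.

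First I would substitute the ansatz $\psi(t,x)=e^{i\lambda t}\phi(x)$ into the evolution equation in \eqref{equiCSS}. Since $i\partial_t\psi=-\lambda\psi$ and the spatial terms are time-independent, this yields the stationary equation
\[
\Delta\phi \;=\; (\lambda+A_0)\phi \;+\; \frac{2mA_\theta+A_\theta^2}{r^2}\phi \;-\; g|\phi|^2\phi,
\]
where $A_0, A_\theta$ are the time-independent fields associated with $\phi$. Next, multiply by $2\,\overline{x\cdot\nabla\phi}=2r\,\partial_r\bar{\phi}$, take the real part, and integrate over $\R^2$. For the Laplacian side I would invoke the standard two-dimensional identity
\[
\int_{\R^2} 2\,\Re\bigl(\overline{x\cdot\nabla\phi}\;\Delta\phi\bigr)\,dx \;=\; (n-2)\int_{\R^2}|\nabla\phi|^2\,dx \;=\;0,
\]
which is where the dimensional simplification enters. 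For the remaining terms, setting $V:=\lambda+A_0+\tfrac{2mA_\theta+A_\theta^2}{r^2}$ and using $2\Re(\bar\phi\,x\cdot\nabla\phi)=x\cdot\nabla|\phi|^2$, integration by parts gives
\[
\int V\,x\cdot\nabla|\phi|^2\,dx \;=\; -\int(2V+r\,\partial_r V)|\phi|^2\,dx, \qquad
\int|\phi|^2\,x\cdot\nabla|\phi|^2\,dx \;=\; -\int|\phi|^4\,dx.
\]

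The crux is now to compute $r\,\partial_r V$ using the field constraints from \eqref{equiCSS}. The relation $\partial_r A_0=r^{-1}(m+A_\theta)|\phi|^2$ gives $r\,\partial_r A_0=(m+A_\theta)|\phi|^2$, while $\partial_r A_\theta=-\tfrac12 r|\phi|^2$ yields
\[
r\,\partial_r\!\left(\frac{2mA_\theta+A_\theta^2}{r^2}\right)
\;=\; -\frac{2(2mA_\theta+A_\theta^2)}{r^2} \;-\; (m+A_\theta)|\phi|^2.
\]
The two $(m+A_\theta)|\phi|^2$ contributions cancel, so that $r\,\partial_r V=-2(2mA_\theta+A_\theta^2)/r^2$ and therefore $2V+r\,\partial_r V=2(\lambda+A_0)$. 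Putting everything together, the Pohoz̆aev identity emerges as $2\!\int(\lambda+A_0)|\phi|^2\,dx=g\!\int|\phi|^4\,dx$, which is \eqref{Pohozaev}.

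The main technical obstacle I foresee is justifying all the integration by parts — in particular the vanishing of boundary terms both at the origin (where $A_\theta/r$ and $m/r$ factors appear) and at infinity (where one needs $A_0$ to decay, as already used in Lemma~\ref{lem:A0}, and $\phi$ to have enough decay). I would handle this in the standard way, introducing a smooth radial cutoff $\chi_R(r)=\chi(r/R)$ and repeating the computation with $\chi_R\,x\cdot\nabla\bar\phi$ as multiplier, then letting $R\to\infty$; the extra terms produced by $\nabla\chi_R$ are controlled via the $H^1_m$ regularity of $\phi$ and the Lemma~\ref{lem:Atheta}, \ref{lem:A0} bounds, and vanish in the limit. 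Equivariance $(\partial_\theta\phi=im\phi)$ is used only to identify $|D_\theta\phi|^2=(m+A_\theta)^2|\phi|^2$, but is otherwise inessential to the computation above.
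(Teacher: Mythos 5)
Your argument is correct, but it is genuinely different from the paper's. The paper proves \eqref{Pohozaev} by observing that for a standing wave $(\lambda+A_0)|\phi|^2=\Im(\bar\phi D_t\phi)$, substituting the covariant equation $D_t\phi=iD_\ell D_\ell\phi+ig|\phi|^2\phi$, and integrating by parts covariantly; that route produces $\int(\lambda+A_0)|\phi|^2\,dx=g\int|\phi|^4\,dx-\int|D_x\phi|^2\,dx$, so it delivers the stated form of the identity only after one also knows $\int|D_x\phi|^2\,dx=\tfrac{g}{2}\int|\phi|^4\,dx$, i.e.\ $E(\phi)=0$ --- the standard zero-energy property of mass-critical standing waves, which is itself a dilation-type identity. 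Your proof instead applies the classical Pohozaev multiplier $x\cdot\nabla\bar\phi$ directly to the stationary elliptic equation, and its key point --- which I verified --- is the exact cancellation $2V+r\partial_r V=2(\lambda+A_0)$ for $V=\lambda+A_0+r^{-2}(2mA_\theta+A_\theta^2)$, coming from the constraint $r\partial_r A_0=(m+A_\theta)|\phi|^2$ against the scaling of the $A_\theta$ terms via $\partial_r A_\theta=-\tfrac12 r|\phi|^2$. This yields \eqref{Pohozaev} in one shot without invoking $E(\phi)=0$; it is essentially the stationary analogue of the virial identity \eqref{virial}, and combined with the paper's computation it in fact shows that every $H^1_m$ standing wave has zero energy. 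The price is that your multiplier argument needs more justification at the level of $H^1_m$ regularity (the boundary terms at $r=0$ and $r=\infty$, and the a priori sense of $\Delta\phi$ and $x\cdot\nabla\phi$); your proposed cutoff $\chi_R$ regularization is the standard and appropriate fix, and the required bounds on $A_\theta$ and $A_0$ are indeed supplied by Lemmas \ref{lem:Atheta} and \ref{lem:A0}.
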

\begin{proof}
For a standing wave with frequency $\lambda$ we can write
\[
\int (\lambda + A_0) |\phi|^2 dx
=
\int \Im(\bar{\phi} D_t \phi) dx
\]
Next we replace $D_t \phi$ using the first equation of \eqref{CSScom} and integrate by parts.
\end{proof}
Through integrating by parts (differently) in \eqref{Pohozaev},
we can recover the Pohozaev-type identity established in the case $m = 0$
in \cite[Prop.~2.3]{ByHuSe12}.
\begin{cor}
Let $\phi \in L^\infty_t H^1_m$ be a standing wave solution of \eqref{equiCSS}. Then
\[
(\lambda - 2 m A_0(0)) \int |\phi|^2 dx
+ 2 \int_0^\infty \frac{1}{r^2} |D_\theta \phi|^2 dx = \frac{g}{2} \int |\phi|^4 dx
\]
\end{cor}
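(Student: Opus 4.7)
The plan is to start from the Pohozaev identity \eqref{Pohozaev} and integrate by parts differently in the $\int A_0 |\phi|^2 dx$ term, trading $A_0$ for $A_\theta$-data via the constraint equations of \eqref{equiCSS}. Under the equivariant ansatz everything is radial, so the point is to manipulate the one-dimensional integral $\int_0^\infty A_0(r) |\phi(r)|^2 r\, dr$.

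First I would use the spatial curvature equation $\partial_r A_\theta = -\tfrac12 |\phi|^2 r$ from \eqref{equiCSS} to rewrite $|\phi|^2 r\, dr = -2\, dA_\theta$, converting the integral to $-2 \int_0^\infty A_0\, dA_\theta$. Integration by parts produces a boundary term $[A_0 A_\theta]_0^\infty$ and a bulk term $2\int_0^\infty A_\theta\, \partial_r A_0\, dr$. The boundary evaluation vanishes: at $r=0$ one has $A_\theta(0)=0$ from the integral representation \eqref{Atheta}, and at $r=\infty$ we use the standing decay hypothesis $A_0(\infty)=0$ imposed throughout the paper (see the discussion preceding Lemma \ref{lem:A0}).

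Next I would substitute the temporal curvature relation $\partial_r A_0 = r^{-1}(m+A_\theta)|\phi|^2$ from \eqref{equiCSS} and split
\[
A_\theta (m+A_\theta) = (m+A_\theta)^2 - m(m+A_\theta).
\]
Under the equivariance ansatz $|D_\theta \phi|^2 = (m + A_\theta)^2 |\phi|^2$, so the first piece reassembles into the desired term $\int r^{-2} |D_\theta\phi|^2 \cdot r\, dr$ (with the correct factor of $2$). The second piece reduces to $-m \int_0^\infty \partial_r A_0\, dr$, which integrates explicitly to a boundary contribution involving $A_0(0)$, using once more $A_0(\infty)=0$.

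Reassembling with the appropriate $2\pi$ factors and substituting back into \eqref{Pohozaev} then yields the claimed identity. The main technical obstacle is justifying that $A_0(0)$ is a well-defined finite number in the equivariant class $H^1_m$: near the origin $m$-equivariance forces $|\phi|^2 = O(r^{2|m|})$, which together with $A_\theta = O(r^{2|m|+2})$ shows that the integrand in $\partial_r A_0 = r^{-1}(m+A_\theta)|\phi|^2$ behaves like $r^{2|m|-1}$ near $0$, hence is locally integrable and $A_0(r) \to A_0(0) \in \R$ as $r \to 0^+$. Once this regularity is in place, every integration by parts in the argument is legitimate and the corollary follows from the Pohozaev identity by direct substitution.
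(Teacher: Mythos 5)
Your argument is correct and is essentially the paper's own proof: both start from the Pohozaev identity and integrate by parts in $\int_0^\infty A_0\,|\phi|^2\,r\,dr$ using $\partial_r A_\theta=-\tfrac12|\phi|^2 r$ and $\partial_r A_0=r^{-1}(m+A_\theta)|\phi|^2$, with the boundary conditions $A_\theta(0)=0$ and $A_0(\infty)=0$, and then recognize $(m+A_\theta)^2|\phi|^2=|D_\theta\phi|^2$. The only cosmetic difference is that the paper absorbs the constant $m$ into the antiderivative (writing $|\phi|^2 r=-2\partial_r(m+A_\theta)$), so the $mA_0(0)$ contribution appears directly as a boundary evaluation at $r=0$ rather than through your splitting $A_\theta(m+A_\theta)=(m+A_\theta)^2-m(m+A_\theta)$ followed by the explicit integration of $\partial_r A_0$.
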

\begin{proof}
We have
\[
\int_0^\infty A_0 |\phi|^2 r dr
=
\int_0^\infty \left( - \int_r^\infty \frac{m + A_\theta}{s} |\phi|^2(s) ds \right) |\phi|^2(r) r dr
\]
Now write
\[
|\phi|^2 r = -2 \partial_r \left( m - \frac12 \int_0^r |\phi|^2 s ds \right)
\]
Integrating by parts yields
\[
\int_0^\infty A_0 |\phi|^2 r dr
=
-2m A_0(0) + 2 \int_0^\infty \frac{(m+ A_\theta)^2}{r^2} |\phi|^2 r dr
\]
\end{proof}

\bibliography{CSSbib}

\providecommand{\bysame}{\leavevmode\hbox to3em{\hrulefill}\thinspace}
\providecommand{\MR}{\relax\ifhmode\unskip\space\fi MR }
\providecommand{\MRhref}[2]{%
  \href{http://www.ams.org/mathscinet-getitem?mr=#1}{#2}
}
\providecommand{\href}[2]{#2}
\begin{thebibliography}{10}

\bibitem{BeBoSa95}
L.~Berg{\'e}, A.~De~Bouard, and J.-C. Saut, \emph{Blowing up time-dependent
  solutions of the planar, {C}hern-{S}imons gauged nonlinear {S}chr\"odinger
  equation}, Nonlinearity \textbf{8} (1995), no.~2, 235--253. \MR{1328596
  (96b:81025)}

\bibitem{Bo99}
J.~Bourgain, \emph{Global wellposedness of defocusing critical nonlinear
  {S}chr\"odinger equation in the radial case}, J. Amer. Math. Soc. \textbf{12}
  (1999), no.~1, 145--171. \MR{1626257 (99e:35208)}

\bibitem{ByHuSe12}
Jaeyoung Byeon, Hyungjin Huh, and Jinmyoung Seok, \emph{Standing waves of
  nonlinear {S}chr\"odinger equations with the gauge field}, J. Funct. Anal.
  \textbf{263} (2012), no.~6, 1575--1608. \MR{2948224}

\bibitem{ChSp09}
Robin~Ming Chen and Daniel Spirn, \emph{Symmetric {C}hern-{S}imons-{H}iggs
  vortices}, Comm. Math. Phys. \textbf{285} (2009), no.~3, 1005--1031.
  \MR{2470914 (2010e:58021)}

\bibitem{ChWa94}
K.~S. Chou and Tom Yau-Heng Wan, \emph{Asymptotic radial symmetry for solutions
  of {$\Delta u+e^u=0$} in a punctured disc}, Pacific J. Math. \textbf{163}
  (1994), no.~2, 269--276. \MR{1262297 (95a:35038)}

\bibitem{VeSc86}
H.~J. de~Vega and F.~A. Schaposnik, \emph{Electrically charged vortices in
  nonabelian gauge theories with {C}hern-{S}imons term}, Phys. Rev. Lett.
  \textbf{56} (1986), no.~24, 2564--2566. \MR{845964 (87f:81099)}

\bibitem{VeSc86b}
\bysame, \emph{Vortices and electrically charged vortices in nonabelian gauge
  theories}, Phys. Rev. D (3) \textbf{34} (1986), no.~10, 3206--3213.
  \MR{867028 (88a:81139)}

\bibitem{Jackiw1982}
S.~Deser, R.~Jackiw, and S.~Templeton, \emph{Topologically massive gauge
  theories}, Ann. Physics \textbf{140} (1982), no.~2, 372--411. \MR{665601
  (84j:81128)}

\bibitem{Du95}
Gerald Dunne, \emph{Self-dual {C}hern-{S}imons theories}, Springer, 1995,
  Lecture notes in physics: {N}.s.~M, Monographs; 36.

\bibitem{EzHoIw91}
Z.~F. Ezawa, M.~Hotta, and A.~Iwazaki, \emph{Breathing vortex solitons in
  nonrelativistic {C}hern-{S}imons gauge theory}, Phys. Rev. Lett. \textbf{67}
  (1991), no.~4, 411--414. \MR{1114940 (92f:81100)}

\bibitem{EzHoIw91b}
\bysame, \emph{Nonrelativistic {C}hern-{S}imons vortex solitons in external
  magnetic field}, Phys. Rev. \textbf{44} (1991), no.~D, 452--63.

\bibitem{GiVe92}
J.~Ginibre and G.~Velo, \emph{Smoothing properties and retarded estimates for
  some dispersive evolution equations}, Comm. Math. Phys. \textbf{144} (1992),
  no.~1, 163--188. \MR{1151250 (93a:35065)}

\bibitem{GrRy07}
I.~S. Gradshteyn and I.~M. Ryzhik, \emph{Table of integrals, series, and
  products}, seventh ed., Elsevier/Academic Press, Amsterdam, 2007, Translated
  from the Russian, Translation edited and with a preface by Alan Jeffrey and
  Daniel Zwillinger, With one CD-ROM (Windows, Macintosh and UNIX). \MR{2360010
  (2008g:00005)}

\bibitem{GuWa10}
Z.~{Guo} and Y.~{Wang}, \emph{{Improved {S}trichartz estimates for a class of
  dispersive equations in the radial case and their applications to nonlinear
  {S}chr\"odinger and wave equation}}, ArXiv e-prints: 1007.4299 (2010).

\bibitem{GuKo11}
S.~{Gustafson} and E.~{Koo}, \emph{{Global well-posedness for 2D radial
  {S}chr\"odinger maps into the sphere}}, ArXiv e-prints: 1105.5659 (2011).

\bibitem{HuSe13}
Hyungjin Huh and Jinmyoung Seok, \emph{The equivalence of the
  {C}hern-{S}imons-{S}chr\"odinger equations and its self-dual system}, J.
  Math. Phys. \textbf{54} (2013), no.~2, 021502, 5. \MR{3076362}

\bibitem{JaPi90}
R.~Jackiw and So-Young Pi, \emph{Soliton solutions to the gauged nonlinear
  {S}chr\"odinger equation on the plane}, Phys. Rev. Lett. \textbf{64} (1990),
  no.~25, 2969--2972. \MR{1056846 (91k:81085)}

\bibitem{JaPi91b}
\bysame, \emph{Time-dependent {C}hern-{S}imons solitons and their
  quantization}, Phys. Rev. D (3) \textbf{44} (1991), no.~8, 2524--2532.
  \MR{1132645 (93d:81108)}

\bibitem{JaPi91}
\bysame, \emph{Self-dual {C}hern-{S}imons solitons}, Progr. Theoret. Phys.
  Suppl. (1992), no.~107, 1--40, Low-dimensional field theories and condensed
  matter physics (Kyoto, 1991). \MR{1194691 (94d:81114)}

\bibitem{Jackiw1991}
R.~Jackiw, So-Young Pi, and Erick~J. Weinberg, \emph{Topological and
  nontopological solitons in relativistic and nonrelativistic {C}hern-{S}imons
  theory}, Particles, strings and cosmology ({B}oston, {MA}, 1990), World Sci.
  Publ., River Edge, NJ, 1991, pp.~573--588. \MR{1170608}

\bibitem{JaTe81}
R.~Jackiw and S.~Templeton, \emph{How super-renormalizable interactions cure
  their infrared divergences}, Phys. Rev. \textbf{23} (1993), no.~B, 2291--304.

\bibitem{JaWe90}
R.~Jackiw and Erick~J. Weinberg, \emph{Self-dual {C}hern-{S}imons vortices},
  Phys. Rev. Lett. \textbf{64} (1990), no.~19, 2234--2237. \MR{1050530
  (91a:81117)}

\bibitem{KeMe06}
Carlos~E. Kenig and Frank Merle, \emph{Global well-posedness, scattering and
  blow-up for the energy-critical, focusing, non-linear {S}chr\"odinger
  equation in the radial case}, Invent. Math. \textbf{166} (2006), no.~3,
  645--675. \MR{2257393 (2007g:35232)}

\bibitem{KeMe08}
\bysame, \emph{Global well-posedness, scattering and blow-up for the
  energy-critical focusing non-linear wave equation}, Acta Math. \textbf{201}
  (2008), no.~2, 147--212. \MR{2461508 (2011a:35344)}

\bibitem{KiTaVi09}
Rowan Killip, Terence Tao, and Monica Visan, \emph{The cubic nonlinear
  {S}chr\"odinger equation in two dimensions with radial data}, J. Eur. Math.
  Soc. (JEMS) \textbf{11} (2009), no.~6, 1203--1258. \MR{2557134 (2010m:35487)}

\bibitem{KiViZh08}
Rowan Killip, Monica Visan, and Xiaoyi Zhang, \emph{The mass-critical nonlinear
  {S}chr\"odinger equation with radial data in dimensions three and higher},
  Anal. PDE \textbf{1} (2008), no.~2, 229--266. \MR{2472890 (2011b:35487)}

\bibitem{LiSmTa13}
B.~{Liu}, P.~{Smith}, and D.~{Tataru}, \emph{Local wellposedness of
  {C}hern-{S}imons-{S}chr\"odinger}, Int. Math. Res. Not. IMRN; doi:
  10.1093/imrn/rnt161 (2013).

\bibitem{MaPaSo91}
L.~Martina, O.~K. Pashaev, and G.~Soliani, \emph{Self-dual {C}hern-{S}imons
  solitons in nonlinear {$\sigma$}-models}, Modern Phys. Lett. A \textbf{8}
  (1993), no.~34, 3241--3250. \MR{1249658 (94g:81138)}

\bibitem{PaKh86}
Samir~K. Paul and Avinash Khare, \emph{Charged vortices in an abelian {H}iggs
  model with {C}hern-{S}imons term}, Phys. Lett. B \textbf{174} (1986), no.~4,
  420--422. \MR{855612 (87j:81269a)}

\bibitem{Sh09}
Shuanglin Shao, \emph{Sharp linear and bilinear restriction estimates for
  paraboloids in the cylindrically symmetric case}, Rev. Mat. Iberoam.
  \textbf{25} (2009), no.~3, 1127--1168. \MR{2590695 (2011e:35362)}

\bibitem{St01}
Atanas Stefanov, \emph{Strichartz estimates for the {S}chr\"odinger equation
  with radial data}, Proc. Amer. Math. Soc. \textbf{129} (2001), no.~5,
  1395--1401 (electronic). \MR{1814165 (2001m:35073)}

\bibitem{Ta00}
Terence Tao, \emph{Spherically averaged endpoint {S}trichartz estimates for the
  two-dimensional {S}chr\"odinger equation}, Comm. Partial Differential
  Equations \textbf{25} (2000), no.~7-8, 1471--1485. \MR{1765155 (2001h:35038)}

\bibitem{TaViZh08}
Terence Tao, Monica Visan, and Xiaoyi Zhang, \emph{Minimal-mass blowup
  solutions of the mass-critical {NLS}}, Forum Math. \textbf{20} (2008), no.~5,
  881--919. \MR{2445122 (2009m:35495)}

\bibitem{Wi90}
Frank Wilczek, \emph{Fractional statistics and anyon superconductivity}, World
  Scientific Publishing Co. Inc., Teaneck, NJ, 1990. \MR{1081990 (92c:81001)}

\bibitem{Ya87}
Kenji Yajima, \emph{Existence of solutions for {S}chr\"odinger evolution
  equations}, Comm. Math. Phys. \textbf{110} (1987), no.~3, 415--426.
  \MR{891945 (88e:35048)}

\end{thebibliography}
\bibliographystyle{amsplain}

\end{document}